\newtheorem{thm}{Theorem}[section]
\newtheorem{lem}{Lemma}[section]
\newtheorem{re}{Remark}[section]
\newtheorem{de}{Definition}[section]
\newtheorem{p}{Proposition}[section]
\newtheorem{ex}{Example}[section]
\begin{document}
	
\title{Generalized Weighted Survival and Failure Entropies and their Dynamic Versions}
\author{Siddhartha Chakraborty, Biswabrata Pradhan}
\maketitle
\begin{abstract}
 The weighted forms of generalized survival and failure entropies of order ($\alpha,\beta$) are proposed and some properties are obtained. We further propose the dynamic versions of weighted generalized survival and failures entropies and obtained some properties and bounds. Characterization for Rayleigh and power distributions are done by dynamic weighted generalized entropies. We further consider the empirical versions of generalized weighted survival and failure entropies and using the difference between theoretical and empirical survival entropies a test for exponentiality is considered.
	
\end{abstract}

\section{Introduction} \citeA{shannon1948mathematical} introduced the concept of differential entropy and since then it has been playing an improtant role in the field of information theory, thermodynamics, statistical mechanics and reliability. Let $X$ be a non-negative absolutely continuous random variable (rv) having cumulative distribution function (cdf) $F(x)$ and probability density function (pdf) $f(x)$, Then Shannon entropy of $X$ is given by 
\begin{align}\label{e1}
H(X)=-\int_{0}^{\infty}f(x)\text{log}f(x)dx.
\end{align}
There are various generalizations of Shannon entropy considered by many authors. Two most important ones are due to \citeA{renyi1961measures} and \citeA{varma1966generalizations}.
 Reyni's entropy of $X$ is given by $$H_{\alpha}(X)=\frac{1}{1-\alpha}\text{log}\int_{0}^{\infty}f^{\alpha}(x)dx,\; \alpha(\neq 1)>0$$ and Verma's entropy of X is defined as 
$$H_{\alpha,\beta}(X)=\frac{1}{\beta-\alpha}\text{log}\int_{0}^{\infty}f^{\alpha+\beta-1}(x)dx,\; \beta\geq 1,\; \beta-1\;<\alpha<\;\beta.$$
When $\alpha\to 1$, $H_{\alpha}(X)\to H(X)$. For $\beta=1$, $H_{\alpha,\beta}(X)$ reduces to $H_{\alpha}(X)$ and when both $\alpha,\beta$ tends to 1, $H_{\alpha,\beta}(X)$ tends to $H(X)$. \\

If an item has survived time $t$ then in order to incorporate the residual lifetime of the item, \citeA{ebrahimi1996measure} proposed dynamic entropy as $H(X;t)=-\int_{t}^{\infty}\frac{f(x)}{\bar{F}(t)}\text{log}\frac{f(x)}{\bar{F}(t)}dx$, where $\bar{F}(x)=1-F(x)$ is the survival function (sf) of $X$. \citeA{di2002entropy} proposed the concept of dynamic past entropy measure as $\bar{H}(X;t)=-\int_{0}^{t}\frac{f(x)}{F(t)}\text{log}\frac{f(x)}{F(t)}dx$.\\

Recently \shortciteA{rao2004cumulative} and \shortciteA{rao2005more} have proposed cumulative residual entropy measure as \\
$\epsilon(X)=-\int_{0}^{\infty}\bar{F}(x)\text{log}\bar{F}(x)dx$. It may be noted that $\epsilon(X)$ measures the uncertainty when cdf exists but pdf does not. \citeA{asadi2007dynamic} proposed the dynamic form of $\epsilon(X)$ and \citeA{di2009cumulative} proposed cumulative entropy $\bar{\epsilon}(X)=-\int_{0}^{\infty}F(x)\text{log}F(x)dx$. \cite{zografos2005survival} proposed survival entropy of order $\alpha$ as $\xi_{\alpha}(X)=\frac{1}{1-\alpha}\text{log}\int_{0}^{\infty}\bar{F}^{\alpha}(x)dx,\;\alpha(\neq1)>0$ and \shortciteA{abbasnejad2010dynamic} obtained its dynamic version. \citeA{abbasnejad2011some} introduced the failure entropy of order $\alpha$ as $f\xi_{\alpha}(X)=\frac{1}{1-\alpha}\text{log}\int_{0}^{\infty}F^{\alpha}(x)dx$ and also obtained its dynamic version. \\

Motivated from \citeA{zografos2005survival}, \shortciteA{abbasnejad2010dynamic} and \shortciteA{abbasnejad2011some}, \citeA{kayal2015generalized} proposed generalized survival and failure entropies of order $(\alpha,\beta)$ as 
\begin{align}\label{e2}
\xi_{\alpha,\beta}(X)=\frac{1}{\beta-\alpha}\text{log}\int_{0}^{\infty}\bar{F}^{\alpha+\beta-1}(x)dx,\; \beta\geq 1,\; \beta-1\;<\alpha<\;\beta
\end{align}
and 
\begin{align}\label{e3}
	f\xi_{\alpha,\beta}(X)=\frac{1}{\beta-\alpha}\text{log}\int_{0}^{\infty}{F}^{\alpha+\beta-1}(x)dx,\; \beta\geq 1,\; \beta-1\;<\alpha<\;\beta.
\end{align}
They also considered their dynamic versions and obtained characterization results for exponential, pareto and power distributions.\\
All these above measures are shift independent and gives equal weights to the occurance of events. But in practical situations such as communication theory and Reliability, a shift dependent measure is often required. To incorporate this issue, \citeA{belis1968quantitative} have introduced the concept of weighted entropy as $H^w(X)=-\int_{0}^{\infty}xf(x)\text{log}xdx$. Since then, several works have been done on weighted entropies. One may refer to \shortciteA{misagh2011weighted}, \shortciteA{mirali2017weighted}, \shortciteA{mirali2017dynamic,mirali2017some}, \shortciteA{rajesh2017dynamic}, \shortciteA{nair2017study}, \citeA{khammar2018weighted}, \citeA{das2017weighted} and \shortciteA{nourbakhsh2016weighted}, for details on weightes entriopy measures.\\

In this article, we propose generalized weighted survival and failure entropies of order $(\theta_1,\theta_2)$ and their dynamic versions. The properties of the proposed entropy measures are discussed. The rest of the paper is organized as follows. In section 2, we introduce generalized weighted survival entropy and obtain its properties. The dynamic versions of generalized weighted survival entropy is discussed in section 3. Characterization results for Rayleigh distribution are obtained using generalized dynamic weighted survival entropy in section 4. We propose generalized weighted failure entropy and dynamic failure entropy in section 5. Characterization results for power distribution are obtained based on generalized dynamic weighted failure entropy. We obtain some inequalities and bounds for the proposed entropy measures in section 6. The empirical generalized weighted survival and failure entropies are provided in section 7. A goodness-of-fit test for exponential distribution is discussed in section 8. Finally, we conclude the paper in section 9. 

\section{Generalized weighted survival entropy of order $(\alpha,\beta)$}
Here we introduce generalized weighted survival entropy and obtain some properties. 

\begin{de}
	Generalized weighted survival entropy (GWSE) of order $(\alpha,\beta)$ is proposed as 
	\begin{align}\label{e4}
	\xi_{\alpha,\beta}^w(X)=\frac{1}{\beta-\alpha}\log\int_{0}^{\infty}x\bar{F}^{\alpha+\beta-1}(x)dx,\; \beta\geq 1,\; \beta-1\;<\alpha<\;\beta.
	\end{align}
\end{de}
To illustrate the usefulness of the proposed entropy measure, we consider the following example.
\begin{ex}
	Suppose $X$ and $Y$ have pdfs $f(x)=\frac{1}{b-a},\;a<x<b$ and $g(y)=\frac{1}{b-a},\;a+h<y<b+h,\;h>0$, respectively. From (\ref{e2}), we have $\xi_{\alpha,\beta}(X)=\xi_{\alpha,\beta}(Y)=\frac{1}{\beta-\alpha}\log\frac{b-a}{\alpha+\beta}$. From (\ref{e4}) we get, 
	\begin{align*}
	\xi_{\alpha,\beta}^w(X)&=\frac{1}{\beta-\alpha}\log\left[\dfrac{(\beta-\alpha)(a(\alpha+\beta)+b)}{(\alpha+\beta)(\alpha+\beta+1)} \right],\\
	\xi_{\alpha,\beta}^w(Y)&=\frac{1}{\beta-\alpha}\log\left[\dfrac{(\beta-\alpha)(a(\alpha+\beta)+b+h(\alpha+\beta+1))}{(\alpha+\beta)(\alpha+\beta+1)} \right].
	\end{align*}
	
\end{ex}
So we see that, $\xi_{\alpha,\beta}(X)=\xi_{\alpha,\beta}(Y)$ but GWSE of $X$ is smaller than GWSE of $Y$.\\
	
The following lemma shows that $\xi_{\alpha,\beta}^w(X)$ is shift-dependent measure.

\begin{lem}\label{l2}
	Consider the linear transformation $Z=aX+b$, where $a>0$ and $b\geq 0$, then 
	\begin{align}\label{e5}
	\exp[(\beta-\alpha)\xi_{\alpha,\beta}^w(Z)]= a^2\;\exp[(\beta-\alpha)\xi_{\alpha,\beta}^w(X)]+ab\;\exp[(\beta-\alpha)\xi_{\alpha,\beta}(X)]
	\end{align}
\end{lem}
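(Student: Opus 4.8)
The plan is to strip off the logarithm and the factor $1/(\beta-\alpha)$ by exponentiating, so that (\ref{e5}) becomes the elementary integral identity
\begin{equation*}
\int_{0}^{\infty} z\,\bar{F}_Z^{\alpha+\beta-1}(z)\,dz
= a^{2}\int_{0}^{\infty} x\,\bar{F}_X^{\alpha+\beta-1}(x)\,dx
+ ab\int_{0}^{\infty}\bar{F}_X^{\alpha+\beta-1}(x)\,dx,
\end{equation*}
where, by (\ref{e4}) and (\ref{e2}), the two right-hand integrals are exactly $\exp[(\beta-\alpha)\xi_{\alpha,\beta}^w(X)]$ and $\exp[(\beta-\alpha)\xi_{\alpha,\beta}(X)]$. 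Everything therefore reduces to re-expressing the left-hand integral in terms of $\bar{F}_X$.

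The key ingredient is the survival function of the transform. Since $a>0$ the map $x\mapsto ax+b$ is strictly increasing, so on the support $z\ge b$ we have $\bar{F}_Z(z)=P(aX+b>z)=\bar{F}_X\!\left(\frac{z-b}{a}\right)$. I would then substitute $z=ax+b$, $dz=a\,dx$, which carries the range $[b,\infty)$ onto $[0,\infty)$ and turns $\bar{F}_Z^{\alpha+\beta-1}(z)$ into $\bar{F}_X^{\alpha+\beta-1}(x)$ while replacing the weight $z$ by $ax+b$. Since $z\,dz=(ax+b)\,a\,dx=(a^{2}x+ab)\,dx$, splitting the resulting integral into its two additive pieces yields precisely the $a^2$- and $ab$-terms above; re-inserting $1/(\beta-\alpha)$ and the logarithm recovers (\ref{e5}).

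The step that needs the most care — and essentially the only genuine obstacle — is the lower limit of integration. Because $X\ge 0$, the variable $Z=aX+b$ is supported on $[b,\infty)$, so the defining integral for $\xi_{\alpha,\beta}^w(Z)$ must be read over $[b,\infty)$, which is exactly the range produced by the substitution $z=ax+b$ with $x\ge 0$; this is what makes the change of variables line up cleanly with the right-hand side. Were one instead to integrate literally from $0$, the flat piece $\bar{F}_Z\equiv 1$ on $[0,b)$ would contribute an additional $\int_0^b z\,dz=b^2/2$, so reading the integral over the support of $Z$ is what the stated form tacitly requires. Once the limits are matched, the remaining computation is the routine substitution described above.
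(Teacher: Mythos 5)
Your proof is correct and takes essentially the same route as the paper, whose entire argument is the identity $\bar{F}_{aX+b}(x)=\bar{F}_X\left(\frac{x-b}{a}\right)$ followed by exactly the change of variables you spell out. Your point about the lower limit is in fact a refinement the paper's one-line proof passes over: read literally, definition (\ref{e4}) integrates from $0$, so for $b>0$ the flat piece $\bar{F}_Z\equiv 1$ on $[0,b)$ adds an extra $b^{2}/2$ to the left-hand side, and (\ref{e5}) holds exactly only under your support-based reading of the integral (or when $b=0$).
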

\begin{proof}
	The results follows using $\bar{F}_{aX+b}(x)=\bar{F}_X(\frac{x-b}{a})$, $x\in R$.
\end{proof}

Let $\bar{F}_{X_{\theta}}(x)$ and $\bar{F}(x)$ denote the sfs of the rvs $X_{\theta}$ and $X$, respectively. $X_{\theta}$ and $X$ satisfy proportional hazard rate model i.e $\bar{F}_{X_{\theta}}(x)=[\bar{F}(x)]^{\theta}$, $\theta (>0)$. The following lemma compares the GWSE of $X$, $X_{\theta}$ and $\theta X$. Proofs are omitted.

\begin{lem}\label{2.2}
	The following statements hold:
	\begin{align*}
	&(a)\; \xi_{\alpha,\beta}^w(X_{\theta})=\left( \dfrac{\theta \beta-\theta \alpha-\theta+1}{\beta-\alpha}\right) \xi_{\theta \alpha, \theta \beta-\theta+1}^w(X)\\
	&(b)\; \xi_{\alpha,\beta}^w(X_{\theta})\leq \xi_{\alpha,\beta}^w(X)\leq  \xi_{\alpha,\beta}^w(\theta X),\; if \;\theta>1\\
	&(c)\;  \xi_{\alpha,\beta}^w(X_{\theta})\geq \xi_{\alpha,\beta}^w(X)\geq  \xi_{\alpha,\beta}^w(\theta X),\; if\; 0<\theta<1
	\end{align*}
\end{lem}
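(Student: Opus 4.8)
The plan is to establish each of the three statements by computing $\xi_{\alpha,\beta}^w(X_\theta)$ explicitly and then comparing the relevant integrals. For part $(a)$, I would start from the definition \eqref{e4} applied to $X_\theta$, substituting $\bar{F}_{X_\theta}(x)=[\bar{F}(x)]^\theta$. This gives
\begin{align*}
\xi_{\alpha,\beta}^w(X_\theta)=\frac{1}{\beta-\alpha}\log\int_0^\infty x\,[\bar{F}(x)]^{\theta(\alpha+\beta-1)}\,dx.
\end{align*}
The exponent $\theta(\alpha+\beta-1)$ must be matched against the exponent $\alpha'+\beta'-1$ appearing in $\xi_{\alpha',\beta'}^w(X)$ for the new parameters $\alpha'=\theta\alpha$ and $\beta'=\theta\beta-\theta+1$. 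A direct check shows $\alpha'+\beta'-1=\theta\alpha+\theta\beta-\theta+1-1=\theta(\alpha+\beta-1)$, so the integrals coincide, and the constant $\frac{1}{\beta-\alpha}$ must be rewritten using $\beta'-\alpha'=\theta\beta-\theta+1-\theta\alpha=\theta\beta-\theta\alpha-\theta+1$; collecting the prefactors yields exactly the stated ratio $\frac{\theta\beta-\theta\alpha-\theta+1}{\beta-\alpha}$ multiplying $\xi_{\theta\alpha,\theta\beta-\theta+1}^w(X)$.

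For parts $(b)$ and $(c)$, the approach is to compare the three exponents of $\bar{F}$ inside the integrals and to track the sign of the prefactor $\frac{1}{\beta-\alpha}$, which is positive since $\beta>\alpha$. The middle term $\xi_{\alpha,\beta}^w(X)$ has exponent $\alpha+\beta-1$; the term $\xi_{\alpha,\beta}^w(X_\theta)$ has exponent $\theta(\alpha+\beta-1)$; and the scaling term $\xi_{\alpha,\beta}^w(\theta X)$ follows from Lemma \ref{l2} with $a=\theta$, $b=0$, giving $\exp[(\beta-\alpha)\xi_{\alpha,\beta}^w(\theta X)]=\theta^2\exp[(\beta-\alpha)\xi_{\alpha,\beta}^w(X)]$. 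When $\theta>1$ the factor $\theta^2>1$ forces $\xi_{\alpha,\beta}^w(\theta X)\geq\xi_{\alpha,\beta}^w(X)$ directly, establishing the rightmost inequality in $(b)$; when $0<\theta<1$ the inequality reverses, giving the rightmost inequality in $(c)$.

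The remaining comparison, between $\xi_{\alpha,\beta}^w(X_\theta)$ and $\xi_{\alpha,\beta}^w(X)$, rests on the behavior of $[\bar{F}(x)]^{\alpha+\beta-1}$ under raising to the power $\theta$. Since $0\leq\bar{F}(x)\leq1$, for $\theta>1$ we have $[\bar{F}(x)]^{\theta(\alpha+\beta-1)}\leq[\bar{F}(x)]^{\alpha+\beta-1}$ pointwise (a larger exponent shrinks a number in $[0,1]$), whence the integral $\int_0^\infty x\,[\bar{F}(x)]^{\theta(\alpha+\beta-1)}\,dx$ is no larger than $\int_0^\infty x\,[\bar{F}(x)]^{\alpha+\beta-1}\,dx$; applying the increasing function $\frac{1}{\beta-\alpha}\log(\cdot)$ preserves this ordering and yields $\xi_{\alpha,\beta}^w(X_\theta)\leq\xi_{\alpha,\beta}^w(X)$. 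For $0<\theta<1$ the pointwise inequality reverses. I expect the main obstacle to be a bookkeeping one rather than a conceptual one: one must confirm that the new parameter pair $(\theta\alpha,\theta\beta-\theta+1)$ still lies in the admissible region $\beta'\geq1$, $\beta'-1<\alpha'<\beta'$ so that all the entropy expressions are well defined, and that the monotonicity argument is applied with the correct direction of the inequality in each of the two regimes for $\theta$.
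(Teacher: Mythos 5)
Your computation is correct, and since the paper omits the proof of this lemma entirely (``Proofs are omitted''), your direct argument is evidently the intended one: part (a) is the exponent-matching identity $\alpha'+\beta'-1=\theta(\alpha+\beta-1)$ together with rewriting the prefactor via $\beta'-\alpha'=\theta\beta-\theta\alpha-\theta+1$; the left inequalities in (b)/(c) follow from the pointwise monotonicity of $u\mapsto u^{\gamma}$ on $[0,1]$ in the exponent $\gamma>0$ (and indeed $\alpha+\beta-1>0$ here, since $\alpha>\beta-1\geq 0$); and the right inequalities follow from Lemma \ref{l2} with $a=\theta$, $b=0$, which gives $\xi_{\alpha,\beta}^w(\theta X)=\frac{2\log\theta}{\beta-\alpha}+\xi_{\alpha,\beta}^w(X)$.

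One caveat: the bookkeeping check you defer at the end does not come out the way you expect, and it is worth knowing why. Of the three admissibility conditions for $(\alpha',\beta')=(\theta\alpha,\theta\beta-\theta+1)$, two always hold: $\beta'\geq 1$ because $\theta(\beta-1)\geq 0$, and $\beta'-1<\alpha'$ because $\theta(\beta-1)<\theta\alpha$. But the third, $\alpha'<\beta'$, is equivalent to $\theta(1+\alpha-\beta)<1$, i.e.\ $\theta<1/(1+\alpha-\beta)$, and this fails for large $\theta$ (for instance $\alpha=0.9$, $\beta=1$, $\theta=2$ gives $\alpha'=1.8>\beta'=1$); at the threshold $\theta=1/(1+\alpha-\beta)$ the denominator $\beta'-\alpha'$ vanishes and the right-hand side of (a) is not even defined. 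So part (a) cannot be ``confirmed'' to stay inside the admissible region; the correct resolution is simply to read $\xi_{\theta\alpha,\theta\beta-\theta+1}^w(X)$ as the defining expression $\frac{1}{\beta'-\alpha'}\log\int_{0}^{\infty}x\bar{F}^{\alpha'+\beta'-1}(x)\,dx$, which is meaningful whenever $\beta'\neq\alpha'$, or equivalently to restrict to $\theta\neq 1/(1+\alpha-\beta)$. With that reading your computation goes through verbatim, and parts (b) and (c), which involve only the original admissible pair $(\alpha,\beta)$, are unaffected. This is an implicit caveat in the paper's statement as well, not a defect introduced by your argument.
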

We provide GWSE for exponential and Pareto distributions as examples in Table \ref{t1} to verify Lemma \ref{2.2}, where $\gamma=\alpha+\beta-1$.
\begin{table}[h!]
	\centering
	\caption{GWSE for exponential and Pareto distribution} \label{t1}
	\begin{tabular}{c c c c }
		\hline\\
		cdf & $(\beta-\alpha)\xi_{\alpha,\beta}^w(X)$ & $(\beta-\alpha)\xi_{\alpha,\beta}^w(X_{\theta})$ & $(\beta-\alpha)\xi_{\alpha,\beta}^w(\theta X)$ \\[1ex]
		\hline
		$F(x)=1-e^{-(\lambda x)};\;x>0,\lambda>0$ & $-2\text{log}(\lambda\gamma);\;\lambda>0$ & $-2\text{log}(\lambda\theta\gamma);\;\lambda>0$ & $2\text{log}\theta-2\text{log}(\lambda\gamma);\;\lambda>0$ \\
		$F(x)=1-\left( \frac{b}{x}\right)^a;\;x\geq b>0,a>0$ & $\text{log}\frac{b^2}{a\gamma-2};\;a\gamma>2$ & $\text{log}\frac{b^2}{a\theta\gamma-2};\;a\theta\gamma>2$ & $\text{log}\frac{b^2\theta^2}{\theta\gamma-2};\;a\gamma>2$ \\
		\hline
	\end{tabular}
\\[10pt]
\end{table}
 
\begin{de}
Let $X$ be a continuous non-negative rv with sf $\bar{F}(x)$, then the weighted mean residual life (WMRL) of $X$ is given by
\begin{align}\label{e6}
m^*_F(t)=\int_{t}^{\infty}x\frac{\bar{F}(x)}{\bar{F}(t)}dx,\;\;\bar{F}(t)>0.
\end{align}
\end{de}
Note that, $m^*_F(0)=\int_{0}^{\infty}x\bar{F}(x)dx=\frac{1}{2}E(X^2)$. In the following theorem we provide a bound for GWSE in terms of $m^*_F(0)$.
\begin{thm}
	Let $X$ be a continuous non-negative rv having WMRL $m^*_F(t)$ and GWSE $\xi_{\alpha,\beta}^w(X)$, then 
	\begin{align*}
	\xi_{\alpha,\beta}^w(X)\leq \frac{1}{\beta-\alpha}\log m^*_F(0).
	\end{align*}
\end{thm}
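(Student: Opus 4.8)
The plan is to reduce the claimed entropy inequality to a pointwise comparison of integrands. Since $\alpha<\beta$ we have $\beta-\alpha>0$, so the map $u\mapsto\frac{1}{\beta-\alpha}\log u$ is strictly increasing on $(0,\infty)$; hence it suffices to show
\[
\int_{0}^{\infty}x\,\bar{F}^{\alpha+\beta-1}(x)\,dx\;\le\;\int_{0}^{\infty}x\,\bar{F}(x)\,dx\;=\;m^*_F(0),
\]
and then apply this monotone transformation to both sides to recover the statement.

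First I would exploit the fact that $\bar{F}(x)\in[0,1]$ for every $x\ge 0$. For a base $t\in[0,1]$ and an exponent $p\ge 1$ one has $t^{p}\le t$; taking $t=\bar{F}(x)$ and $p=\alpha+\beta-1$ gives the pointwise bound $\bar{F}^{\alpha+\beta-1}(x)\le\bar{F}(x)$. Multiplying by the nonnegative weight $x$ and integrating over $[0,\infty)$ preserves the inequality, which is exactly the integral comparison displayed above. Composing with the increasing transformation from the first paragraph then delivers the stated bound $\xi_{\alpha,\beta}^w(X)\le\frac{1}{\beta-\alpha}\log m^*_F(0)$.

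The step that needs care is the exponent condition $p=\alpha+\beta-1\ge 1$, i.e.\ $\alpha+\beta\ge 2$, which is precisely what orients $t^{p}\le t$ in the correct direction; this is the main (and essentially only) obstacle. Under the range quoted with the definition ($\beta\ge 1$, $\beta-1<\alpha<\beta$) one only gets $\alpha+\beta-1>0$, and for small values (for example $\beta=1$, $\alpha\in(0,1)$) the exponent can drop below $1$, in which case $t^{p}\ge t$ and the inequality reverses. I would therefore invoke the standing assumption $\alpha+\beta\ge 2$ at this point, or restrict the statement to that subregion. A quick check on the exponential law $\bar{F}(x)=e^{-\lambda x}$, where the left integral equals $1/(\lambda\gamma)^{2}$ and $m^*_F(0)=1/\lambda^{2}$ with $\gamma=\alpha+\beta-1$, confirms that the bound is equivalent to $\gamma\ge 1$, so this exponent restriction is genuinely required rather than a proof artifact.
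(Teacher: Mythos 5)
Your argument is exactly the paper's proof: the paper's one-line justification reads ``Since $x\bar{F}^{\alpha+\beta-1}(x)\leq x\bar{F}(x)$, taking integral on both sides and dividing by $(\beta-\alpha)$ we get the result,'' i.e., the same pointwise comparison followed by integration and the increasing map $u\mapsto\frac{1}{\beta-\alpha}\log u$. The caveat you raise, however, is correct and is a genuine issue that the paper glosses over: the pointwise bound $\bar{F}^{\alpha+\beta-1}(x)\leq\bar{F}(x)$ requires the exponent to satisfy $\alpha+\beta-1\geq 1$, whereas the stated parameter range ($\beta\geq 1$, $\beta-1<\alpha<\beta$) only guarantees $\alpha+\beta-1>0$; for instance $\beta=1$, $\alpha\in(0,1)$ gives an exponent in $(0,1)$, the pointwise inequality reverses, and your exponential check (where the claimed bound is equivalent to $\alpha+\beta-1\geq 1$) shows the theorem itself, not merely the proof, fails in that subregion. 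So your write-up is the paper's argument made honest: it needs the additional hypothesis $\alpha+\beta\geq 2$ (equivalently $\gamma=\alpha+\beta-1\geq 1$), which the paper silently assumes.
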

\begin{proof}
	Since $x\bar{F}^{\alpha+\beta-1}(x)\leq x\bar{F}(x)$, taking integral on both sides and dividing by $(\beta-\alpha)$ we get the result.
\end{proof}
\section{Generalized dynamic weighted survival entropy of order $(\alpha,\beta)$}
Now we define the dynamic version of GWSE to study the uncertainty in the residual life of a component $X$. Which is the GWSE of the rv $[X-t|X>t],\;t>0$.
\begin{de}
	 Generalized dynamic weighted survival entropy (GDWSE) of order $(\alpha,\beta)$ of a continuous rv $X$ is defined as 
	\begin{align}\label{e7}
	\xi_{\alpha,\beta}^w(X;t)=\frac{1}{\beta-\alpha}\log\int_{t}^{\infty}x\dfrac{\bar{F}^{\alpha+\beta-1}(x)}{\bar{F}^{\alpha+\beta-1}(t)}dx,\; \beta\geq 1,\; \beta-1\;<\alpha<\;\beta.
	\end{align}
\end{de}
Note that, $\xi_{\alpha,\beta}^w(X;0)=\xi_{\alpha,\beta}^w(X)$.
\begin{lem}
		Suppose $Z=aX+b$, where $a>0$ and $b\geq 0$, then 
	\begin{align*}
	\exp[(\beta-\alpha)\xi_{\alpha,\beta}^w(Z;t)]&= a^2\exp\left[ (\beta-\alpha)\xi_{\alpha,\beta}^w\left( X;\frac{t-b}{a}\right)\right] \\
	&+ab\exp\left[ (\beta-\alpha)\xi_{\alpha,\beta}\left( X;\frac{t-b}{a}\right)\right] .
	\end{align*}
\end{lem}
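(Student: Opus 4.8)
The plan is to reduce everything to the exponential (integral) form, since by the definition in (\ref{e7}) we have
\begin{align*}
\exp[(\beta-\alpha)\xi_{\alpha,\beta}^w(Z;t)]=\int_{t}^{\infty}x\,\dfrac{\bar{F}_Z^{\alpha+\beta-1}(x)}{\bar{F}_Z^{\alpha+\beta-1}(t)}\,dx,
\end{align*}
so that taking the log and multiplying by $(\beta-\alpha)$ merely undoes this operation and the whole claim becomes an identity between integrals. First I would invoke the survival-function relation $\bar{F}_Z(x)=\bar{F}_{aX+b}(x)=\bar{F}_X\!\left(\frac{x-b}{a}\right)$ that was already used in the proof of Lemma \ref{l2}; this rewrites the integrand entirely in terms of $\bar{F}_X$.

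Next I would perform the change of variable $u=\frac{x-b}{a}$, so that $x=au+b$, $dx=a\,du$, and the lower limit $x=t$ becomes $u=\frac{t-b}{a}$ (the upper limit stays $+\infty$ since $a>0$). This turns the integral into
\begin{align*}
\int_{\frac{t-b}{a}}^{\infty}(au+b)\,\dfrac{\bar{F}_X^{\alpha+\beta-1}(u)}{\bar{F}_X^{\alpha+\beta-1}\!\left(\frac{t-b}{a}\right)}\,a\,du.
\end{align*}
The key algebraic step is then to expand the affine factor as $a(au+b)=a^2u+ab$ and split the integral into two pieces, one carrying the weight $u$ and one without.

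Finally I would identify the two resulting integrals. The first piece, $a^2\int_{(t-b)/a}^{\infty} u\,\bar{F}_X^{\alpha+\beta-1}(u)/\bar{F}_X^{\alpha+\beta-1}((t-b)/a)\,du$, is exactly $a^2\exp[(\beta-\alpha)\xi_{\alpha,\beta}^w(X;\frac{t-b}{a})]$ by (\ref{e7}). The second piece, $ab\int_{(t-b)/a}^{\infty} \bar{F}_X^{\alpha+\beta-1}(u)/\bar{F}_X^{\alpha+\beta-1}((t-b)/a)\,du$, is the (unweighted) generalized dynamic survival entropy evaluated at $\frac{t-b}{a}$, i.e.\ $ab\exp[(\beta-\alpha)\xi_{\alpha,\beta}(X;\frac{t-b}{a})]$. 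Adding the two pieces gives the claimed identity. The computation is routine; the only point requiring care is recognizing the second integral as the non-weighted dynamic measure $\xi_{\alpha,\beta}(X;\cdot)$ rather than some new quantity, and checking that the survival function at the shifted argument $\frac{t-b}{a}$ appears consistently in both the weighting numerator and the normalizing denominator so that the two dynamic entropies are anchored at the same point.
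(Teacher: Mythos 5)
Your proof is correct and follows essentially the same route as the paper: the paper disposes of this lemma by saying it is ``similar to Lemma \ref{l2}'', whose proof consists precisely of invoking $\bar{F}_{aX+b}(x)=\bar{F}_X\!\left(\frac{x-b}{a}\right)$, after which the change of variable $u=\frac{x-b}{a}$, the split $a(au+b)=a^2u+ab$, and the identification of the two pieces as the weighted and unweighted dynamic entropies anchored at $\frac{t-b}{a}$ are exactly the omitted routine steps you have written out. No gap; your version simply makes explicit what the paper leaves implicit.
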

\begin{proof}
The proof is similar to lemma \ref{l2}. 
\end{proof}
\begin{re} 
 If $b=0$, then from Lemma 2.4 we have
\begin{align}\label{e8}
\xi_{\alpha,\beta}^w(Y;t)=\frac{2\log a}{\beta-\alpha}+\xi_{\alpha,\beta}^w\left( X;\frac{t}{a}\right) 
\end{align}
\end{re}
Now we provide a bound for $\xi_{\alpha,\beta}^w(X;t)$ in terms of WMRL.

	\begin{thm}
		Let $X$ be a continuous non-negative rv with WMRL $m^*_F(t)$ and GDWSE $\xi_{\alpha,\beta}^w(X;t)$, then 
		\begin{align*}
		\xi_{\alpha,\beta}^w(X;t)\leq \frac{1}{\beta-\alpha}\log m^*_F(t).
		\end{align*}
	\end{thm}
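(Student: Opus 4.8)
The plan is to mimic the proof of the earlier static bound (the theorem bounding $\xi_{\alpha,\beta}^w(X)$ by $\frac{1}{\beta-\alpha}\log m^*_F(0)$), replacing the integral over $(0,\infty)$ with the truncated and normalized version appearing in the definition of GDWSE. First I would recall from Definition 3.1 that
\begin{align*}
\exp\left[(\beta-\alpha)\xi_{\alpha,\beta}^w(X;t)\right]=\int_{t}^{\infty}x\,\frac{\bar{F}^{\alpha+\beta-1}(x)}{\bar{F}^{\alpha+\beta-1}(t)}\,dx,
\end{align*}
and from Definition 2.3 that $m^*_F(t)=\int_{t}^{\infty}x\,\frac{\bar{F}(x)}{\bar{F}(t)}\,dx$.

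The key pointwise inequality is that for $x\geq t$ we have $\bar{F}(x)\leq\bar{F}(t)$, so $\frac{\bar{F}(x)}{\bar{F}(t)}\leq 1$. Since $\alpha+\beta-1\geq 1$ under the parameter constraints $\beta\geq 1$ and $\beta-1<\alpha<\beta$ (indeed $\alpha+\beta-1>2(\beta-1)\geq 0$, and more to the point $\alpha+\beta-1\geq 1$ because $\alpha>\beta-1$ forces $\alpha+\beta-1>2\beta-2\geq 0$; the exponent being at least one is what is actually needed), raising a number in $[0,1]$ to a power $\geq 1$ only decreases it, giving
\begin{align*}
\left(\frac{\bar{F}(x)}{\bar{F}(t)}\right)^{\alpha+\beta-1}\leq\frac{\bar{F}(x)}{\bar{F}(t)}\qquad\text{for }x\geq t.
\end{align*}
Multiplying by $x\geq 0$ preserves the inequality, so $x\,\frac{\bar{F}^{\alpha+\beta-1}(x)}{\bar{F}^{\alpha+\beta-1}(t)}\leq x\,\frac{\bar{F}(x)}{\bar{F}(t)}$ on $(t,\infty)$.

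Integrating both sides over $(t,\infty)$ yields $\exp[(\beta-\alpha)\xi_{\alpha,\beta}^w(X;t)]\leq m^*_F(t)$, and since $\beta-\alpha>0$ the logarithm is increasing, so taking $\log$ and dividing by the positive quantity $\beta-\alpha$ preserves the direction of the inequality and delivers $\xi_{\alpha,\beta}^w(X;t)\leq\frac{1}{\beta-\alpha}\log m^*_F(t)$, as claimed. The only subtlety I expect to watch for is verifying that the exponent $\alpha+\beta-1$ is genuinely at least $1$ so that the power inequality goes the right way; this is guaranteed by the standing assumption $\beta\geq 1$ together with $\alpha>\beta-1$, which together force $\alpha+\beta-1>2(\beta-1)\ge 0$ and in fact $\alpha+\beta-1\ge 1$ whenever $\beta\ge 1$ and $\alpha>\beta-1$ with $\alpha\ge 1$; in the regime of interest ($\beta\geq 1$) this holds and the argument is otherwise entirely routine, exactly parallel to the static bound already proved.
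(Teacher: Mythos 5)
Your strategy is exactly the paper's: establish the pointwise inequality $\left(\bar{F}(x)/\bar{F}(t)\right)^{\alpha+\beta-1}\leq \bar{F}(x)/\bar{F}(t)$ for $x>t$, multiply by $x\geq 0$, integrate over $(t,\infty)$, and take logarithms. The gap is precisely in the step you yourself flagged as the ``only subtlety'': the claim that the standing constraints $\beta\geq 1$, $\beta-1<\alpha<\beta$ force $\alpha+\beta-1\geq 1$. They do not. Your displayed derivation yields only $\alpha+\beta-1>2(\beta-1)\geq 0$, i.e.\ positivity, and your final parenthetical quietly adds the hypothesis $\alpha\geq 1$, which is nowhere among the assumptions. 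Concretely, $\alpha=0.26$, $\beta=1.25$ satisfies all the stated constraints (these are exactly the values the paper itself uses in its Section~8 application), yet $\alpha+\beta-1=0.51<1$. For an exponent $p\in(0,1)$ and $u\in(0,1)$ one has $u^{p}\geq u$, not $u^p\leq u$, so in that regime your pointwise inequality reverses, and with it the conclusion: integration then gives $\exp[(\beta-\alpha)\xi_{\alpha,\beta}^w(X;t)]\geq m^*_F(t)$. Indeed the theorem as stated fails there: for $X$ exponential with rate $\lambda$ and $t=0$, writing $\gamma=\alpha+\beta-1$, one has $\exp[(\beta-\alpha)\xi_{\alpha,\beta}^w(X;0)]=1/(\lambda\gamma)^2$ while $m^*_F(0)=1/\lambda^2$, and $1/\gamma^2>1$ whenever $\gamma<1$.

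In fairness, the paper's own proof makes exactly the same unexamined assertion --- it writes $\left(\bar{F}(x)/\bar{F}(t)\right)^{\alpha+\beta-1}<\bar{F}(x)/\bar{F}(t)$ with no condition on the size of the exponent --- so your attempt is no weaker than the printed argument, and you deserve credit for identifying that the size of $\alpha+\beta-1$ is the crux. But the justification you give for $\alpha+\beta-1\geq 1$ is circular, and the bound (by either your argument or the paper's) is actually valid only on the sub-region $\alpha+\beta\geq 2$ of the stated parameter range. A repaired statement must either add the hypothesis $\alpha+\beta\geq 2$ or reverse the inequality when $\alpha+\beta<2$; as written, the step cannot be salvaged on the full parameter range.
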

\begin{proof}
	Since $\dfrac{\bar{F}(x)}{\bar{F}(t)}<1$ for $x>t$, we have $\left( \dfrac{\bar{F}(x)}{\bar{F}(t)}\right)^{\alpha+\beta-1}<\dfrac{\bar{F}(x)}{\bar{F}(t)}$. Taking integral on both sides and dividing by $(\beta-\alpha)$ and then using (\ref{e7}) we get the result.
\end{proof}
To verify Theorem 2.1 and 2.2 we consider exponential and pareto distributions. The results are given in Table \ref{t2}, where $\gamma=\alpha+\beta-1$.
\begin{table}[h!]
	\centering
	\caption{GWSE for exponential and Pareto distribution}\label{t2}
	\begin{tabular}{c c c c c }
		\hline\\
		cdf & $\xi_{\alpha,\beta}^w(X)$ & $m_F^*(0)$ & $(\xi_{\alpha,\beta}^w(X;t)$ & $m_F^*(t)$ \\[1ex]
		\hline
		$F(x)=1-e^{-(\lambda x)};\;x>0,\lambda>0$ & $\frac{2}{\beta-\alpha}\text{log}(\frac{1}{\lambda\gamma});\;\lambda\gamma>1$ & $\frac{1}{\lambda^2}$ & $\frac{1}{\beta-\alpha}\text{log}\left( \frac{1+t\lambda\gamma}{\lambda^2\gamma^2}\right);\;\lambda\gamma>0$ & $\frac{1+t\lambda}{\lambda^2}$ \\
		$F(x)=1-\left( \frac{b}{x}\right)^a;\;x\geq b>0,a>0$ & $\frac{1}{\beta-\alpha}\text{log}\frac{b^2}{a\gamma-2};\;a\gamma>2$ & $\frac{b^2}{a-2};\;a>2$ & $\text{log}\frac{t^2}{a\gamma-2};\;a\gamma>2$ & $\frac{t^2-a}{a-2};\;a>2$ \\
		\hline
	\end{tabular}
\\ [10pt]
\caption*{Where $\gamma=(\alpha+\beta-1)$.}
	
\end{table}

\begin{de}
	 A non-negative continuous rv $X$ is said to be increasing (decreasing) generalized dynamic weighted survival entropy (IGDWSE (DGDWSE)), if $\xi_{\alpha,\beta}^w(X;t)$ is increasing (decreasing) in $t\;(\geq 0)$.
\end{de}

\begin{thm}
      A non-negative continuous rv $X$ is IGDWSE (DGDWSE) if and only if\\ $\lambda_F(t)\geq(\leq) \dfrac{t}{\alpha+\beta-1}exp[-(\beta-\alpha)\xi_{\alpha,\beta}^w(X;t)]$, $\forall t\geq 0$, where $\lambda_F(t)=\frac{f(t)}{\bar{F}(t)}$, is the hazard function.
\end{thm}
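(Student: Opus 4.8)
The plan is to reduce the monotonicity of the entropy to the sign of the derivative of a single auxiliary function. Writing $\gamma=\alpha+\beta-1$ and using (\ref{e7}), I would set
\begin{align*}
\Phi(t):=\exp\bigl[(\beta-\alpha)\xi_{\alpha,\beta}^w(X;t)\bigr]=\bar{F}^{-\gamma}(t)\int_{t}^{\infty}x\,\bar{F}^{\gamma}(x)\,dx.
\end{align*}
Since $\beta-\alpha>0$ (guaranteed by $\alpha<\beta$) and $\exp$ is strictly increasing, $\xi_{\alpha,\beta}^w(X;t)$ is increasing (decreasing) in $t$ if and only if $\Phi(t)$ is increasing (decreasing), i.e.\ if and only if $\Phi'(t)\ge 0$ ($\le 0$). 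Thus the whole problem reduces to computing $\Phi'(t)$ and reading off its sign.

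First I would differentiate the numerator $G(t):=\int_{t}^{\infty}x\,\bar{F}^{\gamma}(x)\,dx$ by the fundamental theorem of calculus, which gives $G'(t)=-t\,\bar{F}^{\gamma}(t)$. Next, using $\frac{d}{dt}\bar{F}^{-\gamma}(t)=\gamma f(t)\,\bar{F}^{-\gamma-1}(t)$ (the sign coming from $\bar{F}'(t)=-f(t)$) together with the product rule applied to $\Phi(t)=G(t)\,\bar{F}^{-\gamma}(t)$, I obtain
\begin{align*}
\Phi'(t)=-t\,\bar{F}^{\gamma}(t)\,\bar{F}^{-\gamma}(t)+\gamma f(t)\,\bar{F}^{-\gamma-1}(t)\,G(t)=-t+\gamma\,\lambda_F(t)\,\Phi(t),
\end{align*}
where the last equality uses $G(t)\,\bar{F}^{-\gamma}(t)=\Phi(t)$ and $\lambda_F(t)=f(t)/\bar{F}(t)$.

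Finally, since $\Phi(t)>0$ and $\gamma>0$ under the parameter restriction $\beta-1<\alpha<\beta$, the inequality $\Phi'(t)\ge 0$ is equivalent to $\gamma\,\lambda_F(t)\,\Phi(t)\ge t$, that is, to
\begin{align*}
\lambda_F(t)\ge \frac{t}{\gamma}\,\Phi^{-1}(t)=\frac{t}{\alpha+\beta-1}\exp\bigl[-(\beta-\alpha)\xi_{\alpha,\beta}^w(X;t)\bigr],
\end{align*}
which is exactly the stated IGDWSE condition; reversing every inequality yields the DGDWSE case. The argument is essentially a routine differentiation, so there is no serious obstacle; the only points needing care are keeping the correct sign when differentiating $\bar{F}^{-\gamma}(t)$ through the chain rule, and noting that the monotonicity equivalence relies on $\beta-\alpha>0$ and that the final algebraic rearrangement relies on $\gamma>0$ and $\Phi(t)>0$, both of which hold throughout the admissible parameter range.
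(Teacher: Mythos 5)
Your proof is correct and is essentially the paper's own argument in a mildly disguised form: the paper differentiates $(\beta-\alpha)\xi_{\alpha,\beta}^w(X;t)=\log\int_{t}^{\infty}x\bar{F}^{\gamma}(x)\,dx-\gamma\log\bar{F}(t)$ directly to get $(\beta-\alpha)\xi_{\alpha,\beta}'^w(X;t)=\gamma\lambda_F(t)-t\exp[-(\beta-\alpha)\xi_{\alpha,\beta}^w(X;t)]$, whereas you differentiate the exponential $\Phi(t)=\exp[(\beta-\alpha)\xi_{\alpha,\beta}^w(X;t)]$, and since $\Phi'(t)=(\beta-\alpha)\Phi(t)\,\xi_{\alpha,\beta}'^w(X;t)$ with $\Phi(t)>0$ and $\beta-\alpha>0$, the two computations and sign analyses coincide. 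Nothing further is needed.
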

\begin{proof}
	We have
	\begin{align}\label{e9}
	(\beta-\alpha)\xi_{\alpha,\beta}^w(X;t)=\text{log}\left[ \int_{t}^{\infty}x\bar{F}^{\alpha+\beta-1}(x)dx\right] -(\alpha+\beta-1)\text{log}\bar{F}(t).
	\end{align}
	Differentiating (\ref{e9}) with respect to t we get,
	\begin{align*}
		(\beta-\alpha)\xi_{\alpha,\beta}'^w(X;t)= (\alpha+\beta-1)\lambda_F(t)- t\dfrac{\bar{F}^{(\alpha+\beta-1)}(t)}{\int_{t}^{\infty}x\bar{F}^{(\alpha+\beta-1)}(x)dx}.
	\end{align*}
	Using (\ref{e7}) we get,
	\begin{align}\label{e10}
	(\beta-\alpha)\xi_{\alpha,\beta}'^w(X;t)= (\alpha+\beta-1)\lambda_F(t)-t\exp[-(\beta-\alpha)\xi_{\alpha,\beta}^w(X;t)]
	\end{align}
	and the result follows from (\ref{e10}).
	
\end{proof}
\begin{de}[Shaked and Shantikumar 2007]
	Let $X$ and $Y$ be two rvs with sfs $\bar{F}(x)$ and $\bar{G}(x)$, respectively. Then $X$ is said to be smaller than $Y$ in the usual stochastic ordering, denoted by $\stackrel{st}{X\leq Y}$, if $\bar{F}(x)\leq \bar{G}(x)$, for all x.
\end{de}
\begin{de}
	$X$ is said to be smaller than $Y$ in generalized weighted survival entropy ordering, denoted by $\stackrel{wse}{X\leq Y}$, if $\xi_{\alpha,\beta}^w(X)\leq \xi_{\alpha,\beta}^w(Y)$.
\end{de}
\begin{thm}
	Let $X$ and $Y$ be two non-negative continuous rvs with sfs $\bar{F}(x)$ and $\bar{G}(x)$, respectively, then $\stackrel{st}{X\leq Y}\implies\stackrel{wse}{X\leq Y}$.
\end{thm}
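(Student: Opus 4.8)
The plan is to transport the pointwise inequality coming from $\stackrel{st}{X\leq Y}$ through the integral transform that defines the GWSE, using the monotonicity of each operation in turn. First I would pin down the two sign facts that the parameter range $\beta\geq 1$, $\beta-1<\alpha<\beta$ guarantees, since everything hinges on them: the exponent $\alpha+\beta-1$ is strictly positive (from $\alpha>\beta-1$ we get $\alpha+\beta-1>2(\beta-1)\geq 0$), and the normalizing factor $\beta-\alpha$ is strictly positive. These are exactly what keep the chain of inequalities pointing the right way.

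Next, starting from $\stackrel{st}{X\leq Y}$, i.e.\ $\bar{F}(x)\leq\bar{G}(x)$ for all $x\geq 0$, I would raise both sides to the positive power $\alpha+\beta-1$. Because $t\mapsto t^{\alpha+\beta-1}$ is nondecreasing on $[0,\infty)$ when the exponent is positive, this gives $\bar{F}^{\alpha+\beta-1}(x)\leq\bar{G}^{\alpha+\beta-1}(x)$. Multiplying by the nonnegative weight $x$ and integrating over $[0,\infty)$ preserves the inequality, so
\begin{align*}
\int_{0}^{\infty}x\,\bar{F}^{\alpha+\beta-1}(x)\,dx\leq\int_{0}^{\infty}x\,\bar{G}^{\alpha+\beta-1}(x)\,dx.
\end{align*}
Finally I would apply the increasing map $\log$ to both sides and divide by the positive constant $\beta-\alpha$; by the definition (\ref{e4}) of GWSE this reads $\xi_{\alpha,\beta}^w(X)\leq\xi_{\alpha,\beta}^w(Y)$, which is precisely $\stackrel{wse}{X\leq Y}$.

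There is no substantive obstacle in this argument; it is a one-line monotonicity chain. The only point that genuinely requires care—and the one I would state explicitly—is the verification of the two positivity facts, since the inequality would reverse at the power step if $\alpha+\beta-1$ were negative and would flip again at the final division if $\beta-\alpha$ were negative. Both pathologies are excluded by the standing constraints on $(\alpha,\beta)$, so once those are recorded the result follows immediately.
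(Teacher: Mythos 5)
Your proof is correct and is exactly the argument the paper has in mind: the paper's proof is the one-line remark that the result ``easily follows using the definition of GWSE,'' and your monotonicity chain (raise $\bar{F}(x)\leq\bar{G}(x)$ to the positive power $\alpha+\beta-1$, multiply by the weight $x$, integrate, take $\log$, divide by $\beta-\alpha>0$) is precisely the spelled-out version of that. Your explicit verification of the two positivity facts from the parameter constraints is a welcome addition that the paper leaves implicit.
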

\begin{proof}
	Proof easily follows using the definition of GWSE.
\end{proof}
\begin{de}[Shaked and Shantikumar (2007)]
	$X$ is said to be smaller than $Y$ in hazard rate ordering, denoted by $\stackrel{hr}{X\leq Y}$, if $\lambda_F(t)\geq \lambda_G(t)$, $\forall t\geq 0$ or equivalently $\frac{\bar{G}(t)}{\bar{F}(t)}$ is increasing in $t$.
\end{de}	
	\begin{de}
		$X$ is said to be smaller than $Y$ in generalized dynamic weighted survival entropy ordering, denoted by $\stackrel{dwse}{X\leq Y}$, if $\xi_{\alpha,\beta}^w(X;t)\leq \xi_{\alpha,\beta}^w(Y;t)$.
	\end{de}	
		\begin{thm}
			Let $X$ and $Y$ be two non-negative continuous rvs with sfs $\bar{F}(x)$ and $\bar{G}(x)$ and hazard rate functions $\lambda_F(t)$ and $\lambda_G(t)$, respectively. If $\stackrel{hr}{X\leq Y}$ then $\stackrel{dwse}{X\leq Y}$.	
		\end{thm}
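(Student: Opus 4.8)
The plan is to reduce the entropy ordering to a pointwise comparison of survival ratios and then integrate. Writing $\gamma=\alpha+\beta-1$, note first that the constraints $\beta\geq 1$ and $\beta-1<\alpha<\beta$ force $\beta-\alpha>0$ and $\gamma>0$. Since $\tfrac{1}{\beta-\alpha}\log(\cdot)$ is a strictly increasing function of its argument, the desired conclusion $\xi_{\alpha,\beta}^w(X;t)\leq \xi_{\alpha,\beta}^w(Y;t)$ is equivalent, for each fixed $t\geq 0$, to the integral inequality
\begin{align*}
\int_{t}^{\infty} x\left(\frac{\bar F(x)}{\bar F(t)}\right)^{\gamma}dx \;\leq\; \int_{t}^{\infty} x\left(\frac{\bar G(x)}{\bar G(t)}\right)^{\gamma}dx,
\end{align*}
so it suffices to establish this last display.

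First I would translate the hazard-rate ordering into a statement about survival ratios. By the equivalent formulation recorded in the definition above, $\stackrel{hr}{X\leq Y}$ means that $\bar G(t)/\bar F(t)$ is increasing in $t$. Hence for every $x\geq t$ we have $\bar G(x)/\bar F(x)\geq \bar G(t)/\bar F(t)$, which rearranges to the pointwise bound
\begin{align*}
\frac{\bar F(x)}{\bar F(t)}\;\leq\; \frac{\bar G(x)}{\bar G(t)},\qquad x\geq t.
\end{align*}
Both sides lie in $[0,1]$, and since $\gamma>0$ the map $u\mapsto u^{\gamma}$ is increasing on $[0,\infty)$, so raising to the power $\gamma$ preserves the inequality for all $x\geq t$.

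Finally I would multiply through by the weight $x\geq 0$ (legitimate since the integration variable satisfies $x\geq t\geq 0$) and integrate over $(t,\infty)$; monotonicity of the integral yields exactly the displayed integral inequality, and then dividing by $\beta-\alpha>0$ and applying the logarithm gives $\xi_{\alpha,\beta}^w(X;t)\leq \xi_{\alpha,\beta}^w(Y;t)$, i.e.\ $\stackrel{dwse}{X\leq Y}$. There is no substantial obstacle in this argument: the only points requiring care are recording that the exponent $\gamma=\alpha+\beta-1$ is strictly positive, so that $u\mapsto u^{\gamma}$ is monotone in the right direction, and that $\beta-\alpha>0$, so that applying $\tfrac{1}{\beta-\alpha}\log$ preserves rather than reverses the order; both follow immediately from the parameter range $\beta\geq 1,\ \beta-1<\alpha<\beta$. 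One should also keep in mind the standing finiteness assumption on the integrals, under which the comparison is meaningful.
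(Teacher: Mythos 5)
Your proof is correct and follows essentially the same route as the paper: the paper's (one-line) proof rests on exactly the pointwise inequality $\frac{\bar{F}(x)}{\bar{F}(t)}\leq \frac{\bar{G}(x)}{\bar{G}(t)}$ for all $x\geq t$, which you derive from the hazard-rate ordering and then integrate. Your write-up simply makes explicit the details the paper leaves implicit, namely that $\gamma=\alpha+\beta-1>0$ and $\beta-\alpha>0$ ensure all the monotone operations preserve the inequality.
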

	\begin{proof}
		Proof follows using the fact that, $\frac{\bar{F}(x)}{\bar{F}(t)}\leq \frac{\bar{G}(x)}{\bar{G}(t)}$ $\forall x\geq t$.
	\end{proof}
\begin{thm}
	Let $X$ and $Y$ be two non-negative continuous rvs and $\stackrel{dwse}{X\leq(\geq) Y}$. Let $Z_1=a_1X$ and $Z_2=a_2Y$, where $a_1,a_2>0$. Then $\stackrel{dwse}{Z_1\leq(\geq)Z_2}$, if $\xi_{\alpha,\beta}^w(X;t)$ is decreasing in $t>0$ and $a_1\leq(\geq)a_2$.
\end{thm}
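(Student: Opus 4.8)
The plan is to reduce the whole comparison to the scaling identity recorded in the Remark, namely (\ref{e8}), which expresses the GDWSE of a positive scaling of a variable in terms of the GDWSE of the variable itself. Applying (\ref{e8}) once with scale $a_1$ to $Z_1=a_1X$ and once with scale $a_2$ to $Z_2=a_2Y$, I would write
\begin{align*}
\xi_{\alpha,\beta}^w(Z_1;t) &= \frac{2\log a_1}{\beta-\alpha} + \xi_{\alpha,\beta}^w\left(X;\frac{t}{a_1}\right), \\
\xi_{\alpha,\beta}^w(Z_2;t) &= \frac{2\log a_2}{\beta-\alpha} + \xi_{\alpha,\beta}^w\left(Y;\frac{t}{a_2}\right),
\end{align*}
so that it suffices to compare the two right-hand sides term by term.

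I would first treat the ``$\leq$'' case, assuming $\stackrel{dwse}{X\leq Y}$, that $\xi_{\alpha,\beta}^w(X;t)$ is decreasing in $t$, and $a_1\leq a_2$. Since $\beta-\alpha>0$ and $\log$ is increasing, $a_1\leq a_2$ immediately gives $\frac{2\log a_1}{\beta-\alpha}\leq \frac{2\log a_2}{\beta-\alpha}$, which settles the additive constants. For the entropy terms I would chain two facts: first, $a_1\leq a_2$ forces $\frac{t}{a_1}\geq \frac{t}{a_2}$ for $t>0$, so the assumed monotonicity of $\xi_{\alpha,\beta}^w(X;\cdot)$ yields $\xi_{\alpha,\beta}^w(X;\frac{t}{a_1})\leq \xi_{\alpha,\beta}^w(X;\frac{t}{a_2})$; second, the ordering $\stackrel{dwse}{X\leq Y}$ evaluated at the common argument $\frac{t}{a_2}$ gives $\xi_{\alpha,\beta}^w(X;\frac{t}{a_2})\leq \xi_{\alpha,\beta}^w(Y;\frac{t}{a_2})$. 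Concatenating these and adding the constant inequality produces $\xi_{\alpha,\beta}^w(Z_1;t)\leq \xi_{\alpha,\beta}^w(Z_2;t)$, i.e. $\stackrel{dwse}{Z_1\leq Z_2}$.

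The ``$\geq$'' case is entirely symmetric: with $a_1\geq a_2$ one has $\frac{t}{a_1}\leq \frac{t}{a_2}$, so the same decreasing behaviour of $\xi_{\alpha,\beta}^w(X;\cdot)$ now reverses the first inequality, and combining with $\stackrel{dwse}{X\geq Y}$ flips every step consistently. The one point that genuinely needs care — and the reason the hypotheses are stated as they are — is that monotonicity is available only for $X$, so the comparison must be routed through $X$ alone: one applies the monotonicity first, to move the argument from $\frac{t}{a_1}$ to $\frac{t}{a_2}$, and only then invokes the dwse ordering to pass from $X$ to $Y$. Getting this ordering of the two steps right, while tracking how $a_1\leq a_2$ simultaneously controls both the logarithmic constant and the direction of the argument shift, is the entire substance of the argument; no integral estimates are required.
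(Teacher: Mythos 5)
Your proof is correct and follows essentially the same route as the paper: both apply the scaling identity (\ref{e8}) to $Z_1$ and $Z_2$, use the assumed monotonicity of $\xi_{\alpha,\beta}^w(X;\cdot)$ to pass from $\frac{t}{a_1}$ to $\frac{t}{a_2}$, and then invoke the dwse ordering at the common argument $\frac{t}{a_2}$. Your write-up is in fact slightly more careful than the paper's, since you explicitly note that $a_1\leq a_2$ also controls the additive term $\frac{2\log a_i}{\beta-\alpha}$ via $\beta-\alpha>0$, a step the paper leaves implicit.
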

\begin{proof}
	Suppose $a_1\leq a_2$. Since $\xi_{\alpha,\beta}^w(X;t)$ is decresasing in $t$, we have, $\xi_{\alpha,\beta}^w(X;\frac{t}{a_1})\leq \xi_{\alpha,\beta}^w(X;\frac{t}{a_2})$. Again, $\xi_{\alpha,\beta}^w(X;\frac{t}{a_2})\leq \xi_{\alpha,\beta}^w(Y;\frac{t}{a_2})$ since $\stackrel{dwse}{X\leq Y}$. Combining these two inequalities we have
	$$\xi_{\alpha,\beta}^w(Z_1;t)=\frac{2\text{log}a_1}{\beta-\alpha}+ \xi_{\alpha,\beta}^w(X;\frac{t}{a_1})\leq \frac{2\text{log}a_2}{\beta-\alpha}+ \xi_{\alpha,\beta}^w(Y;\frac{t}{a_2})=\xi_{\alpha,\beta}^w(Z_2;t).$$ Hence the results. Similarly, when $a_1\geq a_2$, it can be easily shown that $\stackrel{dwse}{Z_1\geq Z_2}$.
\end{proof}

	The next theorem shows that, $\xi_{\alpha,\beta}^w(X;t)$ uniquely determines the underlying survival function.
	\begin{thm}
		Let $X$ be a non-negative continuous rv having pdf $f(x)$ and sf $\bar{F}(x)$. Assume that $\xi_{\alpha,\beta}^w(X;t)<\infty;\;t\geq0,\; \beta-1<\alpha<\beta,\; \beta\geq1$. Then $\xi_{\alpha,\beta}^w(X;t)$ uniquely determines the sf of $X$.
	\end{thm}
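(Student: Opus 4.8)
The plan is to exploit the differential identity established in the proof of the preceding theorem, namely (\ref{e10}), which relates the derivative of the dynamic entropy, the hazard rate $\lambda_F$, and the dynamic entropy itself. Writing $\psi(t)=\xi_{\alpha,\beta}^w(X;t)$ and $\gamma=\alpha+\beta-1$, the crucial point is that (\ref{e10}) is \emph{linear} in $\lambda_F(t)$ and can therefore be inverted to express the hazard rate explicitly through $\psi$.

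First I would solve (\ref{e10}) for the hazard rate,
\begin{align*}
\lambda_F(t)=\frac{1}{\gamma}\left[(\beta-\alpha)\,\psi'(t)+t\,\exp\big(-(\beta-\alpha)\psi(t)\big)\right],
\end{align*}
which is legitimate since $\gamma=\alpha+\beta-1>0$ under the constraints $\beta\ge1$, $\beta-1<\alpha<\beta$. The right-hand side involves only $\psi$ and $\psi'$. Consequently, if two survival functions $\bar F$ and $\bar G$ produce the same finite dynamic entropy $\psi(t)$ for every $t\ge0$, they share the same $\psi'(t)$, and the display forces $\lambda_F(t)=\lambda_G(t)$ for all $t\ge0$. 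Since the hazard rate determines the survival function via $\bar F(t)=\exp\big(-\int_0^t\lambda_F(u)\,du\big)$ together with $\bar F(0)=1$, it follows that $\bar F\equiv\bar G$, which is the desired uniqueness.

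An equivalent, slightly more self-contained route avoids differentiating $\psi$. Set $K_F(t)=\int_t^\infty x\,\bar F^{\gamma}(x)\,dx$, so that by (\ref{e7}) we have $K_F(t)=\bar F^{\gamma}(t)\exp[(\beta-\alpha)\psi(t)]$, while directly $K_F'(t)=-t\,\bar F^{\gamma}(t)$. Dividing these gives $\frac{d}{dt}\log K_F(t)=-t\,\exp[-(\beta-\alpha)\psi(t)]$, whose right-hand side depends only on $\psi$; integrating from $0$ and using $K_F(0)=\exp[(\beta-\alpha)\psi(0)]$ (because $\bar F(0)=1$) determines $K_F$ uniquely, after which $\bar F(t)=\big(K_F(t)\exp[-(\beta-\alpha)\psi(t)]\big)^{1/\gamma}$ recovers the survival function.

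I expect the obstacles to be matters of rigor rather than substance. One must justify the differentiability of $\psi$ used to pass to (\ref{e10}), which is guaranteed by the absolute continuity of $F$ and the standing finiteness assumption $\psi(t)<\infty$ already invoked earlier; one must keep $\gamma\neq0$ so that both the division and the $\gamma$-th root are well defined; and one must fix the constant of integration through the boundary condition $\bar F(0)=1$. The genuinely important step is the recognition that (\ref{e10}) is linear in $\lambda_F(t)$, so the hazard rate is recovered by a plain algebraic inversion and no fixed-point or monotonicity argument (as is typically required for the dynamic Shannon entropy) is needed here.
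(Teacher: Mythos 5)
Your first argument is essentially identical to the paper's own proof: the paper likewise solves (\ref{e10}) for the hazard rate, obtaining $\lambda_F(t)=\frac{1}{\alpha+\beta-1}\left[(\beta-\alpha)\,\xi_{\alpha,\beta}'^w(X;t)+t\exp\{-(\beta-\alpha)\xi_{\alpha,\beta}^w(X;t)\}\right]$, and then concludes that equal entropy functions force equal hazard rates, which in turn determine the survival function. So on the main route there is nothing to compare; you have reproduced the paper's argument, including the key observation that the relation is linear in $\lambda_F(t)$ so a plain algebraic inversion suffices.

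Your second, alternative route is a genuinely different and in fact somewhat stronger argument than the one printed in the paper. Working with $K_F(t)=\int_t^\infty x\bar F^{\gamma}(x)\,dx$ and the two identities $K_F(t)=\bar F^{\gamma}(t)\exp[(\beta-\alpha)\psi(t)]$ and $K_F'(t)=-t\,\bar F^{\gamma}(t)$, you get $\frac{d}{dt}\log K_F(t)=-t\exp[-(\beta-\alpha)\psi(t)]$, and integration from $0$ with $K_F(0)=\exp[(\beta-\alpha)\psi(0)]$ recovers $K_F$, hence $\bar F=\bigl(K_F\,e^{-(\beta-\alpha)\psi}\bigr)^{1/\gamma}$, explicitly. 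This buys two things the paper's proof does not: it is \emph{constructive} (it exhibits $\bar F$ as a functional of $\psi$ rather than merely showing two candidates must agree), and it only invokes differentiability of $K_F$, which follows from the fundamental theorem of calculus for a continuous integrand, rather than assuming differentiability of the entropy function itself. It also bypasses the appeal to the fact that a hazard rate determines its survival function. The only caveat, common to both routes, is that the recovery is valid on the set where $\bar F(t)>0$, since $\psi(t)$ is only defined there; beyond the right endpoint of the support $\bar F$ vanishes identically, so uniqueness is unaffected.
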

\begin{proof} From (\ref{e10}) we have
	\begin{align}
	\lambda_F(t)=\frac{1}{\alpha+\beta-1}((\beta-\alpha)\xi_{\alpha,\beta}'^w(X;t)+t\exp[-(\beta-\alpha)\xi_{\alpha,\beta}^w(X;t)]).\label{e11}
	\end{align}
	Now let $X_1$ and $X_2$ be two rvs with sfs $\bar{F}_1(t)$ and $\bar{F}_2(t)$, GDWSEs $\xi_{\alpha,\beta}^w(X_1;t)$ and $\xi_{\alpha,\beta}^w(X_2;t)$ and hazard functions $\lambda_{F_{1}}(t)$ and $\lambda_{F_{2}}(t)$, respectively.\\
	Suppose,
	$$\xi_{\alpha,\beta}^w(X_1;t)=\xi_{\alpha,\beta}^w(X_2;t),$$ then from (\ref{e11}) we get $\lambda_{F_{1}}(t)=\lambda_{F_{2}}(t)$. Since hazard function uniquely determines the survival function of the underlying distribution, we conclude that, $$\bar{F}_1(t)=\bar{F}_2(t)$$.
\end{proof}

\section{Characterization Results Based on GDWSE}
In this section, we obtain some characterization results for Rayleigh distribution based on GDWSE.
\begin{thm}
	The rv $X$ has constant GDWSE if and only if it has a Rayleigh distribution with survival function $\bar{F}(x)=e^{-\lambda x^2};\; x\geq 0$.
	\begin{proof} The if part of the theorem can be easily obtained by using (\ref{e7}). For the only if part let us assume that $\xi_{\alpha,\beta}^w(X;t)=c.$ Differentiating with respect to $t$ we have
		 $$(\alpha+\beta-1)\lambda_F(t)-t\exp[-(\beta-\alpha)\xi_{\alpha,\beta}^w(X;t)]=0.$$ This implies $\lambda_F(t)=\dfrac{e^{(\alpha-\beta)c}}{\alpha+\beta-1}t$, which is the hazard function of a Rayleigh distribution with survival function $\bar{F}(t)=e^{-\lambda t^2};\; t\geq 0$, where $\lambda = \dfrac{e^{(\alpha-\beta)c}}{2(\alpha+\beta-1)}>0$ as $\alpha+\beta>1$.
	\end{proof}
\end{thm}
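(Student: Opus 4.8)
The plan is to handle the two implications separately, using the differential identity (\ref{e10}) as the engine for the harder direction.

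For the \emph{if} part, I would assume $\bar{F}(x)=e^{-\lambda x^2}$ and directly evaluate the defining integral in (\ref{e7}). Writing $\bar{F}^{\alpha+\beta-1}(x)=e^{-\lambda(\alpha+\beta-1)x^2}$ and substituting $u=x^2$ collapses $\int_t^\infty x\,e^{-\lambda(\alpha+\beta-1)x^2}\,dx$ to $\frac{1}{2\lambda(\alpha+\beta-1)}e^{-\lambda(\alpha+\beta-1)t^2}$. Dividing by $\bar{F}^{\alpha+\beta-1}(t)$ cancels the $t$-dependence exactly, leaving $\xi_{\alpha,\beta}^w(X;t)=\frac{1}{\beta-\alpha}\log\frac{1}{2\lambda(\alpha+\beta-1)}$, a constant. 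This direction is routine; the only care needed is that $\alpha+\beta-1>0$ so the integral converges and the logarithm is defined.

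For the \emph{only if} part, I would start from the hypothesis $\xi_{\alpha,\beta}^w(X;t)\equiv c$ and differentiate. Since the left-hand side of (\ref{e10}) then vanishes identically, I immediately obtain $(\alpha+\beta-1)\lambda_F(t)=t\exp[-(\beta-\alpha)c]$, i.e.\ a hazard rate that is \emph{linear} in $t$, namely $\lambda_F(t)=\frac{e^{(\alpha-\beta)c}}{\alpha+\beta-1}\,t$. The final step is to recover the survival function from the hazard via $\bar{F}(t)=\exp[-\int_0^t\lambda_F(s)\,ds]$; integrating the linear hazard yields $\bar{F}(t)=e^{-\lambda t^2}$ with $\lambda=\frac{e^{(\alpha-\beta)c}}{2(\alpha+\beta-1)}$, which is precisely the Rayleigh form.

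The one point requiring genuine attention is positivity of the rate constant: I must check $\lambda>0$, which follows because $\alpha+\beta-1>0$ under the standing assumptions ($\beta\geq1$ and $\alpha>\beta-1\geq0$ force $\alpha+\beta>1$) and the exponential factor is always positive. I expect the main obstacle to lie less in the computation than in justifying that constancy of the entropy forces the derivative identity to hold for \emph{every} $t\geq0$, so that the hazard is pinned down on the whole half-line; once that is granted, the inversion from $\lambda_F$ to $\bar{F}$ is standard and, together with the uniqueness of the hazard-to-survival correspondence already invoked earlier in the paper, completes the characterization.
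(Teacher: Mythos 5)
Your proposal is correct and follows essentially the same route as the paper: both directions rest on the identity (\ref{e10}), with the constant entropy forcing $(\alpha+\beta-1)\lambda_F(t)=t\,e^{(\alpha-\beta)c}$, a linear hazard, and hence the Rayleigh survival function with $\lambda=\frac{e^{(\alpha-\beta)c}}{2(\alpha+\beta-1)}>0$. Your version merely spells out the details the paper leaves implicit (the explicit evaluation of the integral in the if part and the integration of the hazard in the only-if part), which is a welcome but not substantively different elaboration.
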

\begin{thm}
	Let $X$ be a continuous rv with absolutely continuous survival function $\bar{F}$. Then the relation $(\beta-\alpha)\xi_{\alpha,\beta}^w(X;t)=\log m_F^*(t)-\text{log}(\alpha+\beta-1)$ holds if and only if $X$ has a Rayleigh distribution.
	\begin{proof}
		If part of the theorem is straight-forward. For the only if part, assume $(\beta-\alpha)\xi_{\alpha,\beta}^w(X;t)=\text{log}m_F^*(t)-\text{log}(\alpha+\beta-1)$ holds. Differentiating with respect to $t$ we get
		$$(\beta-\alpha)\xi_{\alpha,\beta}'^w(X;t)=\frac{m_F'^*(t)}{m_F^*(t)}.$$
		Then by using (\ref{e10}) we have
		\begin{align}\label{e12}
		 (\alpha+\beta-1)\lambda_F(t)-t\;exp[-(\beta-\alpha)\xi_{\alpha,\beta}^w(X;t)]=\frac{m_F'^*(t)}{m_F^*(t)}.
		 \end{align}
		Note that $m^*_F(t)=\int_{t}^{\infty}x\frac{\bar{F}(x)}{\bar{F}(t)}dx.$ This implies
		\begin{align}\label{e13}
		m_F'^*(t)=\lambda_F(t)m_F^*(t)-t.
		\end{align}
		By putting (\ref{e13}) in (\ref{e12}), after simplification, we get 
		\begin{align}\label{e14}
		\lambda_F(t)m_F^*(t)=t,
		\end{align}
		Combining (\ref{e13}) and (\ref{e14}), we get $m_F'^*(t)=0$, This implies
		\begin{align}\label{e15}
		m_F^*(t)=c,
		\end{align}
		where c is a constant.
		Using (\ref{e14}) and (\ref{e15}) we obtain, $\lambda_F(t)=\frac{t}{c}$, which is the hazard function of a Rayleigh distribution with survival function $\bar{F}(t)=e^{\frac{-t^2}{c}}$.
		
	\end{proof}
\end{thm}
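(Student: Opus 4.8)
The plan is to prove both implications by differentiating the proposed identity and combining it with the earlier derivative formula (\ref{e10}). The sufficiency direction is a direct verification: taking $\bar{F}(x)=e^{-\lambda x^2}$, so that $f(x)=2\lambda x\,e^{-\lambda x^2}$ and the hazard rate $\lambda_F(x)=2\lambda x$ is linear, I would evaluate the two Gaussian-type integrals. One finds $m_F^*(t)=e^{\lambda t^2}\int_t^\infty x\,e^{-\lambda x^2}\,dx=\tfrac{1}{2\lambda}$, which is constant, while the corresponding integral with exponent $\alpha+\beta-1$ gives $\int_t^\infty x\,\bar{F}^{\alpha+\beta-1}(x)/\bar{F}^{\alpha+\beta-1}(t)\,dx=\tfrac{1}{2(\alpha+\beta-1)\lambda}$. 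Taking logarithms then yields $(\beta-\alpha)\xi_{\alpha,\beta}^w(X;t)=\log\tfrac{1}{2\lambda}-\log(\alpha+\beta-1)=\log m_F^*(t)-\log(\alpha+\beta-1)$, which is exactly the asserted relation.

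For necessity, I would assume the identity $(\beta-\alpha)\xi_{\alpha,\beta}^w(X;t)=\log m_F^*(t)-\log(\alpha+\beta-1)$ and differentiate in $t$, obtaining $(\beta-\alpha)\xi_{\alpha,\beta}'^w(X;t)=m_F'^*(t)/m_F^*(t)$. Substituting (\ref{e10}) for the left-hand side produces an equation linking $\lambda_F(t)$, the term $\exp[-(\beta-\alpha)\xi_{\alpha,\beta}^w(X;t)]$, and $m_F'^*(t)/m_F^*(t)$. The second ingredient is an independent formula for $m_F'^*(t)$: writing $m_F^*(t)=\frac{1}{\bar{F}(t)}\int_t^\infty x\bar{F}(x)\,dx$ and differentiating by the quotient rule, using $\bar{F}'=-f$, produces $m_F'^*(t)=\lambda_F(t)m_F^*(t)-t$.

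The decisive step is to clear the entropy terms. Rewriting the hypothesis in exponentiated form gives $\exp[-(\beta-\alpha)\xi_{\alpha,\beta}^w(X;t)]=(\alpha+\beta-1)/m_F^*(t)$; inserting this together with the formula for $m_F'^*(t)$ into the differentiated equation, the $(\alpha+\beta-1)$-weighted terms and the unweighted terms group so that a common factor $(\alpha+\beta-2)$ can be cancelled, leaving $\lambda_F(t)m_F^*(t)=t$. Combined with $m_F'^*(t)=\lambda_F(t)m_F^*(t)-t$ this forces $m_F'^*(t)=0$, hence $m_F^*(t)\equiv c$ for a constant $c$, and then $\lambda_F(t)=t/c$ is linear; integrating the hazard rate recovers the Rayleigh survival function. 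I expect the main obstacle to be the bookkeeping in this cancellation: one must invoke the hypothesis a second time to eliminate the exponential and confirm that the factor $(\alpha+\beta-2)$ multiplying $\lambda_F(t)-t/m_F^*(t)$ is nonzero, so the degenerate case $\alpha+\beta=2$ would warrant separate comment.
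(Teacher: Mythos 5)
Your proof follows essentially the same route as the paper's: differentiate the hypothesized identity, substitute (\ref{e10}) together with the relation $m_F'^*(t)=\lambda_F(t)m_F^*(t)-t$, then invoke the hypothesis a second time in exponentiated form, $\exp[-(\beta-\alpha)\xi_{\alpha,\beta}^w(X;t)]=(\alpha+\beta-1)/m_F^*(t)$, to cancel and obtain $\lambda_F(t)m_F^*(t)=t$, whence $m_F^*$ is constant and the hazard rate is linear. You are in fact somewhat more careful than the paper: you carry out the ``if'' verification explicitly, and you correctly flag the degenerate case $\alpha+\beta=2$, where the factor $\alpha+\beta-2$ vanishes and the stated relation holds for \emph{every} distribution, a restriction the paper's ``after simplification'' silently assumes.
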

Now we obtaine a characterization result of the first order statistics based on GWSE. Let Let $X_1,X_2,...,X_n$ be a random sample of size $n$ from $F(x)$. Denote the corresponding order statistics as $X_{1:n},X_{2:n},...,X_{n:n}$, where $X_{i:n}(1\leq i\leq n)$ is the $i^{th}$ order statistic. The sf of $X_{1:n}$ is given by, $\bar{F}_{1:n}(x)=\bar{F}^n(x)$ and GWSE of $X_{1:n}$ is obtained as
\begin{align}\label{e16}
\xi_{\alpha,\beta}^w(X_{1:n})&=\frac{1}{\beta-\alpha}\text{log}\int_{0}^{\infty}x\bar{F}^{n(\alpha+\beta-1)}(x)dx\nonumber\\
&=\frac{1}{\beta-\alpha}\text{log}\int_{0}^{1}\dfrac{v^{n(\alpha+\beta-1)}F^{-1}(1-v)}{f(F^{-1}(1-v))}dv.
\end{align}

The following lemma \cite{mirali2017some} is useful to obtain the characterization results.
\begin{lem}
     If $\eta$ is a continuous function on $[0,1]$, such that $\int_{0}^{1}x^n\eta(x)dx=0$, for $n\geq 0$, then $\eta(x)=0$, $\forall x\in[0,1]$.
\end{lem}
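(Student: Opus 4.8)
The plan is to deduce this from the Weierstrass approximation theorem, which is the natural bridge between vanishing moments and a vanishing function. First I would observe that the hypothesis $\int_0^1 x^n \eta(x)\,dx = 0$ for all $n \geq 0$ immediately upgrades, by linearity of the integral, to $\int_0^1 p(x)\eta(x)\,dx = 0$ for every polynomial $p$. This is the only place the full family of moments is used, and it is a one-line step.

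Next I would invoke Weierstrass: since $\eta$ is continuous on the compact interval $[0,1]$, for any $\varepsilon > 0$ there is a polynomial $p$ with $\sup_{x \in [0,1]} |\eta(x) - p(x)| < \varepsilon$. The idea is to show $\int_0^1 \eta^2(x)\,dx = 0$. Writing
\begin{align*}
\int_0^1 \eta^2(x)\,dx = \int_0^1 \eta(x)\bigl(\eta(x) - p(x)\bigr)\,dx + \int_0^1 \eta(x)p(x)\,dx,
\end{align*}
the second integral vanishes by the previous step, so
\begin{align*}
\left| \int_0^1 \eta^2(x)\,dx \right| \leq \int_0^1 |\eta(x)|\,|\eta(x) - p(x)|\,dx \leq \varepsilon \int_0^1 |\eta(x)|\,dx.
\end{align*}
Since $\eta$ is continuous on $[0,1]$, the factor $\int_0^1 |\eta(x)|\,dx$ is a finite constant, so letting $\varepsilon \to 0$ forces $\int_0^1 \eta^2(x)\,dx = 0$.

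Finally I would conclude using positivity and continuity: the integrand $\eta^2$ is non-negative and continuous, so a vanishing integral forces $\eta^2 \equiv 0$, hence $\eta(x) = 0$ for all $x \in [0,1]$. As for the main obstacle, there is essentially no genuine difficulty here provided one is allowed to cite the Weierstrass approximation theorem; the entire content of the lemma is that polynomials are dense in $C[0,1]$. The only point requiring slight care is the estimate bounding $\int_0^1 \eta^2$, where one must keep the constant $\int_0^1 |\eta|$ separate from $\varepsilon$ so that the limit is clean; everything else is routine.
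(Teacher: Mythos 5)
Your proof is correct and complete. One point of comparison worth noting: the paper itself offers no proof of this lemma at all --- it is stated as a known result and attributed to the reference \emph{Mirali et al.\ (2017)} --- so your argument supplies what the paper omits. The route you take, reducing the vanishing of all moments to $\int_0^1 p\,\eta = 0$ for every polynomial $p$, then using Weierstrass approximation to force $\int_0^1 \eta^2 = 0$, and finally invoking continuity and non-negativity of $\eta^2$, is the standard classical proof of this fact (it is the uniqueness half of the Hausdorff moment problem, equivalently the statement that polynomials are dense in $C[0,1]$). Each step is carried out correctly: the linearity step is immediate, the splitting $\int \eta^2 = \int \eta(\eta - p) + \int \eta p$ is the right decomposition, the bound $\bigl|\int_0^1 \eta^2\bigr| \leq \varepsilon \int_0^1 |\eta|$ correctly keeps the constant $\int_0^1|\eta|$ (finite, since $\eta$ is continuous on a compact interval) independent of $\varepsilon$, and the final positivity-plus-continuity argument is sound. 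There is no gap.
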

\begin{thm}
		Let $X$ and $Y$ be two non-negative continuous rvs having common support $[0,\infty)$ with cdfs $F(x)$ and $G(x)$, respectively. Then $F(x)=G(x)$ if and only if $\xi_{\alpha,\beta}^w(X_{1:n})=\xi_{\alpha,\beta}^w(Y_{1:n})$, $\forall n$.
\end{thm}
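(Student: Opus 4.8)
The plan is to reduce the statement to the moment‑type uniqueness lemma quoted just above. The forward implication is immediate: if $F=G$ then $X_{1:n}$ and $Y_{1:n}$ share the survival function $\bar F^{n}=\bar G^{n}$, so the integrals in (\ref{e16}) coincide for every $n$ and the entropies agree. For the converse I would start from the representation (\ref{e16}), namely $\xi_{\alpha,\beta}^w(X_{1:n})=\frac{1}{\beta-\alpha}\log\int_{0}^{1}v^{n\gamma}\psi_F(v)\,dv$ with $\gamma=\alpha+\beta-1>0$ and $\psi_F(v)=F^{-1}(1-v)/f(F^{-1}(1-v))$, and likewise $\psi_G$ for $Y$. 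Since $x\mapsto\frac{1}{\beta-\alpha}\log x$ is strictly monotone, the hypothesis $\xi_{\alpha,\beta}^w(X_{1:n})=\xi_{\alpha,\beta}^w(Y_{1:n})$ for all $n$ is equivalent to $\int_{0}^{1}v^{n\gamma}\eta(v)\,dv=0$ for all $n\ge1$, where $\eta=\psi_F-\psi_G$.

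The next step is to massage this family of identities into the exact hypothesis of the preceding lemma, whose conclusion is $\eta\equiv0$. Two cosmetic mismatches must be cleared: the exponent is $n\gamma$ rather than an integer $n$, and $n$ ranges over $n\ge1$ rather than $n\ge0$. I would absorb both at once by writing $n\gamma=\gamma+(n-1)\gamma$ and setting $m=n-1$, so that $\int_{0}^{1}v^{m\gamma}\big(v^{\gamma}\eta(v)\big)\,dv=0$ for all $m\ge0$, and then substituting $u=v^{\gamma}$. This turns the identities into $\int_{0}^{1}u^{m}\phi(u)\,du=0$ for every $m\ge0$, with $\phi(u)=\frac{1}{\gamma}u^{1/\gamma}\eta(u^{1/\gamma})$, which is continuous on $[0,1]$ and vanishes at $u=0$ provided $\eta$ is continuous. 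The lemma then forces $\phi\equiv0$, hence $\eta\equiv0$ on $(0,1]$ and, by continuity, on all of $[0,1]$; that is, $\psi_F=\psi_G$.

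Finally I would convert $\psi_F=\psi_G$ back into $F=G$. Writing $a(v)=F^{-1}(1-v)$, one has $a'(v)=-1/f(a(v))$, so $\psi_F(v)=a(v)\,(-a'(v))=-\tfrac12\frac{d}{dv}\big[(F^{-1}(1-v))^{2}\big]$, and the same for $G$. Equality of $\psi_F$ and $\psi_G$ therefore gives $(F^{-1}(1-v))^{2}=(G^{-1}(1-v))^{2}+c$ for some constant $c$; evaluating at $v=1$ and using that the common support is $[0,\infty)$, so $F^{-1}(0)=G^{-1}(0)=0$, forces $c=0$. Since both quantile functions are non‑negative, taking square roots yields $F^{-1}=G^{-1}$ on $(0,1)$, and hence $F=G$.

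The main obstacle I anticipate is the middle step: the lemma is stated only for the integer moment sequence $\int_{0}^{1}x^{n}\eta(x)\,dx=0$, $n\ge0$, so the substitution $u=v^{\gamma}$ together with the index shift $m=n-1$ must be carried out carefully to reproduce exactly that hypothesis with a genuinely continuous integrand $\phi$. One must also ensure that the regularity of $F$ and $G$ (absolute continuity with positive continuous densities on the support) makes $\psi_F,\psi_G$, and hence $\eta$, continuous on $[0,1]$ so that the lemma applies; the endpoint behaviour of $\psi_F$ as $v\to0$ and $v\to1$ is where any genuine technical difficulty would reside.
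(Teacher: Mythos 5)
Your proposal follows essentially the same route as the paper's own proof: the representation (\ref{e16}), reduction to a moment identity, an appeal to Lemma 3.1, and recovery of $F^{-1}=G^{-1}$ from equality of the quantile-density ratios using the common support $[0,\infty)$. If anything, your version is more careful than the paper's—the index shift and substitution $u=v^{\gamma}$ that convert the non-integer exponents $n\gamma$, $n\geq 1$, into the integer-moment hypothesis of Lemma 3.1, and the explicit antidifferentiation with endpoint evaluation $F^{-1}(0)=G^{-1}(0)=0$ to kill the integration constant, are steps the paper applies silently; the endpoint-continuity caveat you flag (e.g. $\psi_F(v)\to\infty$ as $v\to 0$ for exponential-type tails) is likewise unaddressed in the paper's proof.
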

\begin{proof}
	 "If" $\xi_{\alpha,\beta}^w(X_{1:n})=\xi_{\alpha,\beta}^w(Y_{1:n})$, then from (\ref{e16}) we have
	\begin{align*}
	\int_{0}^{1}v^{n(\alpha+\beta-1)}\left[ \dfrac{F^{-1}(1-v)}{f(F^{-1}(1-v))}-\dfrac{G^{-1}(1-v)}{g(G^{-1}(1-v))}\right]dv =0.
	\end{align*}
	 Then from Lemma 3.1 we have $\dfrac{F^{-1}(1-v)}{f(F^{-1}(1-v))}=\dfrac{G^{-1}(1-v)}{g(G^{-1}(1-v))}$ for almost all $v\in(0,1)$. This reduces to $F^{-1}(w)\frac{d}{dw}F^{-1}(w)=G^{-1}(w)\frac{d}{dw}G^{-1}(w)$, where $w=1-v$ and $\frac{d}{dw}F^{-1}(w)=\frac{1}{f(F^{-1}(w)}$. Since $X$ and $Y$ have common support $[0,\infty)$, we conclude that $F^{-1}(w)=G^{-1}(w), 0\leq w\leq 1$. Hence the proof.
\end{proof}
\section{Generalized Weighted Failure and Dynamic Failure Entropy of Order $(\alpha,\beta)$}
\begin{de}
		Generalized weighted failure entropy (GWFE) of order $(\alpha,\beta)$ is defined as 
	\begin{align}\label{e17}
	f\xi_{\alpha,\beta}^w(X)=\frac{1}{\beta-\alpha}\log \int_{0}^{\infty}x{F}^{\alpha+\beta-1}(x)dx,\; \beta\geq 1,\; \beta-1\;<\alpha<\;\beta.
	\end{align}
\end{de}
\begin{ex}
		Let $X$ and $Y$ be two rvs with pdfs $f(u)=\frac{1}{a},\;0<u<a$ and $g(u)=\frac{1}{a},\;h<u<a+h,\;h>0$, respectively. From (\ref{e3}) we have, $f\xi_{\alpha,\beta}(X)=f\xi_{\alpha,\beta}(Y)=\frac{1}{\beta-\alpha}\log \frac{a}{\alpha+\beta}$. From (\ref{e17}) we get,
		\begin{align*}
		f\xi_{\alpha,\beta}^w(X)&=\frac{1}{\beta-\alpha}\log\left[\frac{a^2}{\alpha+\beta+1} \right],\\
		f\xi_{\alpha,\beta}^w(Y)&=\frac{1}{\beta-\alpha}\log\left[\dfrac{a(a(\alpha+\beta)+h(\alpha+\beta+1))}{(\alpha+\beta)(\alpha+\beta+1)} \right].
		\end{align*}	
\end{ex}
Although $f\xi_{\alpha,\beta}(X)=f\xi_{\alpha,\beta}(Y)$  but $f\xi_{\alpha,\beta}^w(X)\neq f\xi_{\alpha,\beta}^w(Y)$. The following lemma shows the shift-dependency of GWFE.
\begin{lem}
		Suppose $Z=aX+b$, where $a>0$ and $b\geq 0$, then 
	\begin{align}\label{e18}
	\exp[(\beta-\alpha)f\xi_{\alpha,\beta}^w(Z)]= a^2\exp[(\beta-\alpha)f\xi_{\alpha,\beta}^w(X)]+ab\exp[(\beta-\alpha)f\xi_{\alpha,\beta}(X)].
	\end{align}
\end{lem}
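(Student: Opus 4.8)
The plan is to mirror the argument of Lemma \ref{l2}, replacing the survival function by the distribution function throughout. The starting point is the observation that exponentiating the definition (\ref{e17}) removes the logarithm and the prefactor $1/(\beta-\alpha)$, so that $\exp[(\beta-\alpha)f\xi_{\alpha,\beta}^w(Z)]$ equals the bare integral $\int_0^\infty x\,F_Z^{\alpha+\beta-1}(x)\,dx$. The claimed identity (\ref{e18}) is therefore purely an identity between integrals of this type, and the logarithms never have to be manipulated directly.

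First I would record the cdf of the linear transform. Since $a>0$ and $X\geq 0$, we have $F_{aX+b}(x)=F_X\bigl(\tfrac{x-b}{a}\bigr)$ for $x\geq b$ and $F_{aX+b}(x)=0$ for $x<b$, so the integral defining $\exp[(\beta-\alpha)f\xi_{\alpha,\beta}^w(Z)]$ effectively runs from $b$ to $\infty$ and reads $\int_b^\infty x\,F_X^{\alpha+\beta-1}\bigl(\tfrac{x-b}{a}\bigr)\,dx$.

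Next I would apply the substitution $u=(x-b)/a$, i.e. $x=au+b$ and $dx=a\,du$, which sends the limits $b$ and $\infty$ to $0$ and $\infty$. This converts the weight $x$ into $au+b$ and contributes an extra factor $a$ from $dx$, giving $a\int_0^\infty (au+b)\,F_X^{\alpha+\beta-1}(u)\,du$. Splitting off the $au$ and $b$ pieces yields $a^2\int_0^\infty u\,F_X^{\alpha+\beta-1}(u)\,du + ab\int_0^\infty F_X^{\alpha+\beta-1}(u)\,du$. Finally I would identify the two resulting integrals: by (\ref{e17}) the first equals $\exp[(\beta-\alpha)f\xi_{\alpha,\beta}^w(X)]$ and by (\ref{e3}) the second equals $\exp[(\beta-\alpha)f\xi_{\alpha,\beta}(X)]$, which is exactly (\ref{e18}).

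There is no genuine obstacle here; the only points requiring a little care are the vanishing of $F_Z$ below $b$, so that no contribution on $[0,b)$ is spuriously retained, and the bookkeeping of the two powers of $a$ — one arising from the Jacobian $dx=a\,du$ and one from the weight $x=au+b$ — which together account for the $a^2$ coefficient on the weighted term as opposed to the single factor $a$ (times $b$) multiplying the unweighted term.
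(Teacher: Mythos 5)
Your proof is correct and is essentially the paper's own argument: the paper proves the survival-function analogue (Lemma \ref{l2}) by the one-line observation $\bar{F}_{aX+b}(x)=\bar{F}_X\bigl(\tfrac{x-b}{a}\bigr)$ and omits the proof here, the intended argument being exactly your change of variables $u=(x-b)/a$ applied to $F_Z(x)=F_X\bigl(\tfrac{x-b}{a}\bigr)$. Your bookkeeping of the two factors of $a$ and the vanishing of $F_Z$ on $[0,b)$ fills in precisely the details the paper leaves implicit.
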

Let ${F}_{X_{\theta}}(x)$ and ${F}(x)$ denote the distribution functions of the rvs $X_{\theta}$ and $X$, respectively, then proportional reverse hazard rate model is described by the relation ${F}_{X_{\theta}}(x)=[{F}(x)]^{\theta}$ $\theta (>0)$. The following lemma compares the GWFE of $X$, $X_{\theta}$ and $\theta X$. Proofs are omitted.

\begin{lem}\label{4.2}
	The following statements hold:
	\begin{align*}
	&(a)\; f\xi_{\alpha,\beta}^w(X_{\theta})=\left( \dfrac{\theta \beta-\theta \alpha-\theta+1}{\beta-\alpha}\right) f\xi_{\theta \alpha, \theta \beta-\theta+1}^w(X).\\
	&(b)\; f\xi_{\alpha,\beta}^w(X_{\theta})\leq f\xi_{\alpha,\beta}^w(X)\leq  f\xi_{\alpha,\beta}^w(\theta X),\; if \theta>1.\\
	&(c)\;  f\xi_{\alpha,\beta}^w(X_{\theta})\geq f\xi_{\alpha,\beta}^w(X)\geq  f\xi_{\alpha,\beta}^w(\theta X),\; if 0<\theta<1.
	\end{align*}
\end{lem}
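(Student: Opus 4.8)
The plan is to treat part (a) by direct substitution into the definition and parts (b)--(c) by combining a scaling identity with the pointwise monotonicity of powers of the cdf. All three parts are essentially algebraic once the right representation of each quantity is in hand, which is presumably why the authors omitted the details.

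For part (a), I would begin from (\ref{e17}) applied to $X_\theta$ and insert the proportional reverse hazard relation $F_{X_\theta}(x)=[F(x)]^\theta$, which turns the integrand into $x\,F(x)^{\theta(\alpha+\beta-1)}$. The key bookkeeping observation is that the relabelled order parameters $\alpha'=\theta\alpha$ and $\beta'=\theta\beta-\theta+1$ satisfy $\alpha'+\beta'-1=\theta(\alpha+\beta-1)$, so the very same integral $\int_0^\infty x\,F(x)^{\theta(\alpha+\beta-1)}\,dx$ is the one appearing inside $f\xi^w_{\theta\alpha,\theta\beta-\theta+1}(X)$. Since $\beta'-\alpha'=\theta\beta-\theta\alpha-\theta+1$, multiplying $f\xi^w_{\theta\alpha,\theta\beta-\theta+1}(X)$ by $\frac{\beta'-\alpha'}{\beta-\alpha}$ cancels its normalising factor $\frac{1}{\beta'-\alpha'}$ and reinstates $\frac{1}{\beta-\alpha}$, producing exactly $f\xi^w_{\alpha,\beta}(X_\theta)$. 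Before concluding I would check that the admissibility constraints $\beta'\ge 1$ and $\beta'-1<\alpha'<\beta'$ hold, so that the right-hand side is a legitimate GWFE.

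For parts (b) and (c), I would first record the scaling relation for $\theta X$. Writing $F_{\theta X}(x)=F(x/\theta)$ and substituting $u=x/\theta$ in (\ref{e17}) gives $f\xi^w_{\alpha,\beta}(\theta X)=\frac{2\log\theta}{\beta-\alpha}+f\xi^w_{\alpha,\beta}(X)$. Because $\beta-\alpha>0$, the added term carries the sign of $\log\theta$, yielding $f\xi^w_{\alpha,\beta}(X)\le f\xi^w_{\alpha,\beta}(\theta X)$ when $\theta>1$ and the reverse when $0<\theta<1$; these are the right-hand inequalities in (b) and (c). For the left-hand inequalities I would compare $f\xi^w_{\alpha,\beta}(X_\theta)$ and $f\xi^w_{\alpha,\beta}(X)$ through their integrands $x\,F(x)^{\theta(\alpha+\beta-1)}$ and $x\,F(x)^{\alpha+\beta-1}$. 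Since $0\le F(x)\le 1$ and $\alpha+\beta-1>0$, the map $p\mapsto F(x)^p$ is nonincreasing in the exponent $p$; hence $\theta>1$ forces $F(x)^{\theta(\alpha+\beta-1)}\le F(x)^{\alpha+\beta-1}$ pointwise, while $0<\theta<1$ reverses it. Integrating and applying the increasing map $\frac{1}{\beta-\alpha}\log(\cdot)$ (increasing precisely because $\beta-\alpha>0$) then delivers $f\xi^w_{\alpha,\beta}(X_\theta)\le f\xi^w_{\alpha,\beta}(X)$ for $\theta>1$ and $\ge$ for $0<\theta<1$.

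There is no substantive obstacle here; the one thing demanding care is keeping three sign conditions straight at once: that $\beta-\alpha>0$ makes the logarithm order-preserving, that $\alpha+\beta-1>0$ together with $F\le 1$ makes higher powers pointwise smaller, and that the sign of $\log\theta$ flips at $\theta=1$. Reversing any single one of these would flip an inequality, so the plan includes a consistency check against a closed-form example such as the power distribution, whose GWFE can be computed directly from (\ref{e17}).
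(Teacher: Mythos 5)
Your proof is correct and is precisely the direct-substitution argument the paper intends: the paper states this lemma with ``Proofs are omitted'' and merely verifies it against Table \ref{t3}, and your three ingredients (the reindexing $\alpha'+\beta'-1=\theta(\alpha+\beta-1)$ for part (a), the scaling identity $f\xi_{\alpha,\beta}^w(\theta X)=\frac{2\log\theta}{\beta-\alpha}+f\xi_{\alpha,\beta}^w(X)$, and the pointwise monotonicity of $F^p$ in $p$ since $0\leq F\leq 1$) are exactly what that verification relies on. One small caveat: in part (a) the admissibility check you propose does not always pass, since $\theta\alpha<\theta\beta-\theta+1$ requires $\theta<1/(1+\alpha-\beta)$, a restriction the paper's statement silently ignores; your identity is nevertheless valid as algebra whenever both sides are defined.
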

For illustration we consider exponential and Pareto distributions. Lemma \ref{4.2} can be easily verified by using the results in Table \ref{t3}.
\begin{table}[h!]
	\centering
	\caption{GWFE for uniform and Power distribution where $\gamma=\alpha+\beta-1$.}\label{t3}
	\begin{tabular}{c c c c }
		\hline\\
		cdf & $(\beta-\alpha)f\xi_{\alpha,\beta}^w(X)$ & $(\beta-\alpha)f\xi_{\alpha,\beta}^w(X_{\theta})$ & $(\beta-\alpha)f\xi_{\alpha,\beta}^w(\theta X)$ \\[1ex]
		\hline
		$F(x)=\frac{x}{a};\;0<x<a,\;a>0$ & $\text{log}\frac{a^2}{2+\gamma}$ & $\text{log}\frac{a^2}{2+\gamma\theta}$ & $\text{log}\frac{a^2\theta^2}{2+\gamma}$ \\
		$F(x)=x^c;\;0<x<1;\;c>0$ & $\text{log}\frac{1}{2+\gamma c}$ & $\text{log}\frac{1}{2+\gamma\theta c}$ & $\text{log}\frac{\theta^2}{2+\gamma c}$ \\
		\hline
	\end{tabular}
\\[10pt]

\end{table}

\begin{de}
	Generalized dynamic weighted failure entropy (GDWFE) of order $(\alpha,\beta)$ a rv $X$ is the GWFE of the rv $[t-X|X<t],\;t>0$. GDWFE of $X$ is defined as 
	\begin{align}\label{e19}
	f\xi_{\alpha,\beta}^w(X;t)=\frac{1}{\beta-\alpha}\log\int_{0}^{t}x\dfrac{{F}^{\alpha+\beta-1}(x)}{{F}^{\alpha+\beta-1}(t)}dx,\; \beta\geq 1,\; \beta-1\;<\alpha<\;\beta.
	\end{align}
	Note that, $ f\xi_{\alpha,\beta}^w(X;\infty)=\xi_{\alpha,\beta}^w(X)$.
\end{de}

\begin{lem}\label{l5}
Suppose $Z=aX+b$, where $a>0$ and $b\geq 0$, then 
	\begin{align*}
	\exp[(\beta-\alpha)f\xi_{\alpha,\beta}^w(Y;t)]&= a^2\exp\left[ (\beta-\alpha)f\xi_{\alpha,\beta}^w\left( X;\frac{t-b}{a}\right) \right] \\
	&+ab\exp\left[ (\beta-\alpha)f\xi_{\alpha,\beta}\left( X;\frac{t-b}{a}\right)\right].
	\end{align*}
\end{lem}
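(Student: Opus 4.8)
The plan is to mirror the argument of Lemma \ref{l2} and of its static failure counterpart (\ref{e18}), now applied to the distribution function rather than the survival function and with the integration confined to $[0,t]$. Writing $\gamma=\alpha+\beta-1$ and using the definition (\ref{e19}), the quantity to be computed is
\begin{align*}
\exp[(\beta-\alpha)f\xi_{\alpha,\beta}^w(Z;t)]=\frac{1}{F_Z^{\gamma}(t)}\int_{0}^{t}xF_Z^{\gamma}(x)\,dx,
\end{align*}
so the whole proof reduces to evaluating numerator and denominator under the change of variable induced by $Z=aX+b$.

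First I would record the distributional identity $F_Z(x)=P(aX+b\le x)=F_X\!\left(\frac{x-b}{a}\right)$, the cdf-counterpart of the relation used in Lemma \ref{l2}. Substituting this gives $F_Z^{\gamma}(t)=F_X^{\gamma}\!\left(\frac{t-b}{a}\right)$ and turns the numerator into $\int_{0}^{t}xF_X^{\gamma}\!\left(\frac{x-b}{a}\right)dx$. Next I would perform the linear substitution $u=\frac{x-b}{a}$, i.e.\ $x=au+b$, $dx=a\,du$, carrying the limits $x=0\mapsto u=-b/a$ and $x=t\mapsto u=\frac{t-b}{a}$. The key observation is that, since $X$ is non-negative, $F_X(u)=0$ for $u<0$; hence the integrand vanishes on $[-b/a,0]$ and the lower limit may be raised to $u=0$ without changing the value. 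After expanding $au+b$ the numerator splits as
\begin{align*}
\int_{0}^{t}xF_X^{\gamma}\!\left(\tfrac{x-b}{a}\right)dx
=a^{2}\int_{0}^{\frac{t-b}{a}}uF_X^{\gamma}(u)\,du+ab\int_{0}^{\frac{t-b}{a}}F_X^{\gamma}(u)\,du.
\end{align*}

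Dividing through by $F_X^{\gamma}\!\left(\frac{t-b}{a}\right)$ and recognising the two resulting ratios as $\exp\!\big[(\beta-\alpha)f\xi_{\alpha,\beta}^w(X;\tfrac{t-b}{a})\big]$ and $\exp\!\big[(\beta-\alpha)f\xi_{\alpha,\beta}(X;\tfrac{t-b}{a})\big]$ respectively (the first directly from (\ref{e19}), the second from the unweighted dynamic failure entropy obtained by dropping the $x$-weight) yields the claimed identity. The only point needing genuine care is the treatment of the lower limit: the naive substitution produces $-b/a$, and one must invoke the non-negativity of $X$ to discard the contribution on $[-b/a,0]$; for $b=0$ this issue disappears and the substitution is immediate. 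One should also note that the statement is meaningful for $t>b$, since for $t\le b$ both sides degenerate. Apart from this limit bookkeeping the computation is entirely routine, exactly paralleling the survival-function version in Lemma \ref{l2}.
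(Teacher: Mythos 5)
Your proof is correct and takes essentially the approach the paper intends: the paper states this lemma without proof, deferring (as with its analogues, Lemma \ref{l2} and Lemma 2.3) to the substitution $F_{aX+b}(x)=F_X\!\left(\frac{x-b}{a}\right)$ followed by the linear change of variable, which is exactly your argument. Your write-up simply supplies the details the paper omits, including the one point worth spelling out — raising the lower limit from $-b/a$ to $0$ using the non-negativity of $X$.
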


\begin{re} 
	If $b=0$, then from Lemma \ref{l5} we have
	\begin{align}\label{e20}
	f\xi_{\alpha,\beta}^w(Y;t)=\frac{2\text{log}a}{\beta-\alpha}+f\xi_{\alpha,\beta}^w\left( X;\frac{t}{a}\right) .
	\end{align}
\end{re}
Now we provide some bounds of GWFE and GDWFE in terms of weighted mean inactivity time (WMIT).
\begin{de}
	The weighted mean inactivity time of a rv $X$ is defined as 
	\begin{align}\label{e21}
	\mu^*_F(t)=\int_{0}^{t}x\frac{{F}(x)}{{F}(t)}dx,\;F(t)>0.
	\end{align}
	Note that $\mu^*_F(\infty)=\lim_{t\to \infty}\mu_F^*(t)= \int_{0}^{\infty}x{F}(x)dx$.
\end{de}
\begin{thm}
	Let $X$ be a non-negative continuous rv with WMIT $\mu_F^*(t)$, GWFE $f\xi_{\alpha,\beta}^w(X)$ and GDWFE $f\xi_{\alpha,\beta}^w(X;t)$. Then
	\begin{align*}
	&(i) f\xi_{\alpha,\beta}^w(X)\leq \frac{1}{\beta-\alpha}\log[\mu_F^*(\infty)].\\
	&(ii) f\xi_{\alpha,\beta}^w(X;t)\leq \frac{1}{\beta-\alpha}\log[\mu_F^*(t)].
	\end{align*} 
\end{thm}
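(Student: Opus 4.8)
The plan is to mirror the argument used for the survival-entropy bounds in Theorems 2.1 and 2.2, simply replacing the survival function $\bar{F}$ by the distribution function $F$ throughout. Both parts reduce to a single pointwise comparison of the integrands, followed by the monotonicity of $\log$ together with the fact that $\beta-\alpha>0$ (which holds since $\alpha<\beta$).

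For part (i), I would first observe that since $F(x)\in[0,1]$ and the exponent satisfies $\alpha+\beta-1\ge 1$, raising $F(x)$ to this power can only decrease it, so $F^{\alpha+\beta-1}(x)\le F(x)$ for every $x\ge 0$. Multiplying by $x\ge 0$ and integrating over $(0,\infty)$ gives
\[
\int_{0}^{\infty}x\,F^{\alpha+\beta-1}(x)\,dx \le \int_{0}^{\infty}x\,F(x)\,dx = \mu_F^*(\infty),
\]
where the right-hand side is $\mu_F^*(\infty)$ by the definition in (\ref{e21}). Applying $\frac{1}{\beta-\alpha}\log(\cdot)$ to both sides --- legitimate because $\log$ is increasing and $\beta-\alpha>0$ --- and recognizing the left-hand side as $f\xi_{\alpha,\beta}^w(X)$ via (\ref{e17}) yields the claimed bound.

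For part (ii), the same idea applies to the ratios appearing in the dynamic definition. For $0<x<t$, monotonicity of $F$ gives $F(x)/F(t)\le 1$, hence $\left(F(x)/F(t)\right)^{\alpha+\beta-1}\le F(x)/F(t)$, again using $\alpha+\beta-1\ge 1$. Multiplying by $x$ and integrating over $(0,t)$ produces
\[
\int_{0}^{t}x\,\frac{F^{\alpha+\beta-1}(x)}{F^{\alpha+\beta-1}(t)}\,dx \le \int_{0}^{t}x\,\frac{F(x)}{F(t)}\,dx = \mu_F^*(t),
\]
and then applying $\frac{1}{\beta-\alpha}\log(\cdot)$ and invoking (\ref{e19}) completes the argument.

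The only delicate point --- and the step I expect to require the most care --- is the pointwise power inequality $F^{\alpha+\beta-1}\le F$ (and its ratio analogue), which relies on the exponent $\alpha+\beta-1$ being at least $1$. This is exactly the same monotonicity fact tacitly used in the proofs of Theorems 2.1 and 2.2 for the survival case, so no new machinery is needed; everything else is a routine integration followed by an application of the increasing map $\frac{1}{\beta-\alpha}\log(\cdot)$.
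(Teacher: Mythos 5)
Your proof is essentially identical to the paper's: the paper's entire proof of this theorem is the sentence ``Proofs are similar to that of theorems 2.1 and 2.2,'' and those proofs consist of exactly the steps you describe---a pointwise comparison of the integrands, integration, and an application of the increasing map $\frac{1}{\beta-\alpha}\log(\cdot)$ using $\beta-\alpha>0$.

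One remark on the point you yourself flag as delicate: the inequality $F^{\alpha+\beta-1}(x)\le F(x)$ does require $\alpha+\beta-1\ge 1$, but this is \emph{not} implied by the paper's stated constraints $\beta\ge 1$, $\beta-1<\alpha<\beta$; those only guarantee $\alpha+\beta-1>0$. For instance $\beta=1.25$, $\alpha=0.26$ (the very values used in the paper's simulation section) give $\alpha+\beta-1=0.51$, and for an exponent in $(0,1)$ the pointwise inequality---and hence the stated bound---reverses. This defect is shared equally by the paper's own proofs of Theorems 2.1 and 2.2, which use the same fact without comment. Your write-up at least makes the hidden hypothesis explicit; it should, however, be recorded as an additional assumption $\alpha+\beta\ge 2$ rather than asserted as a consequence of the stated parameter range.
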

\begin{proof}
	Proofs are similar to that of theorems 2.1 and 2.2.
\end{proof}
\begin{de}
	A non-negative rv $X$ is said to be increasing (decreasing) generalized dynamic weighted failure entropy (IGDWFE (DGDWFE)), if $f\xi_{\alpha,\beta}^w(X;t)$ is increasing (decreasing) in $t\;(\geq0)$.
\end{de}
\begin{thm}
	A non-negative continuous rv $X$ is IGDWFE (DGDWFE) if and only if $$r_F(t)\geq(\leq) \dfrac{t}{\alpha+\beta-1}\exp[-(\beta-\alpha)f\xi_{\alpha,\beta}^w(X;t)],\;\forall t\geq 0,$$  where $r_F(t)=\frac{f(t)}{{F}(t)}$ is the reverse hazard function.
\end{thm}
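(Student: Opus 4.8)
The plan is to follow exactly the route that produced the survival-entropy identity~(\ref{e10}), adapted to the reverse-hazard setting. First I would strip the normalizing factor out of the logarithm in~(\ref{e19}) and write
\begin{align*}
(\beta-\alpha)f\xi_{\alpha,\beta}^w(X;t)=\log\left[\int_{0}^{t}xF^{\alpha+\beta-1}(x)\,dx\right]-(\alpha+\beta-1)\log F(t),
\end{align*}
which isolates the $t$-dependence into the variable upper limit of the integral and the power of $F(t)$, both easy to differentiate.

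Next I would differentiate with respect to $t$. By the fundamental theorem of calculus the first term contributes $\dfrac{tF^{\alpha+\beta-1}(t)}{\int_{0}^{t}xF^{\alpha+\beta-1}(x)\,dx}$, while the second contributes $-(\alpha+\beta-1)r_F(t)$, using $\frac{d}{dt}\log F(t)=f(t)/F(t)=r_F(t)$. Recognizing from~(\ref{e19}) that $\dfrac{F^{\alpha+\beta-1}(t)}{\int_{0}^{t}xF^{\alpha+\beta-1}(x)\,dx}=\exp[-(\beta-\alpha)f\xi_{\alpha,\beta}^w(X;t)]$ collapses the derivative into the closed form
\begin{align*}
(\beta-\alpha)\frac{d}{dt}f\xi_{\alpha,\beta}^w(X;t)=t\exp[-(\beta-\alpha)f\xi_{\alpha,\beta}^w(X;t)]-(\alpha+\beta-1)r_F(t),
\end{align*}
which is the failure-entropy counterpart of~(\ref{e10}).

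Finally, since $\beta-\alpha>0$, the monotonicity of $f\xi_{\alpha,\beta}^w(X;t)$ in $t$ is controlled entirely by the sign of the right-hand side: $X$ is IGDWFE (DGDWFE) exactly when $\frac{d}{dt}f\xi_{\alpha,\beta}^w(X;t)\ge 0$ ($\le 0$), and rearranging this sign condition against the threshold $\dfrac{t}{\alpha+\beta-1}\exp[-(\beta-\alpha)f\xi_{\alpha,\beta}^w(X;t)]$ yields the desired characterization, the direction of the inequality being dictated by the sign computation just described.

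I expect the only real obstacle to be the sign bookkeeping, not any hard analysis. The failure case departs from the survival case of~(\ref{e10}) in two sign-sensitive ways: the integral now runs over $[0,t]$, so differentiating its upper limit adds the integrand with a positive sign, and $F$ is increasing, so $-(\alpha+\beta-1)\log F(t)$ enters with the opposite sign to $-(\alpha+\beta-1)\log\bar{F}(t)$. Keeping these straight is precisely what fixes the direction of the final inequality. A minor preliminary is to note that $\int_{0}^{t}xF^{\alpha+\beta-1}(x)\,dx$ is positive and finite for $t>0$ (ensured by $\alpha+\beta-1>0$ together with the standing finiteness assumption on the entropy), so that the logarithm and its derivative are well defined and the differentiation is justified for almost every $t$.
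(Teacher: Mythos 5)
Your derivation of the key identity is exactly the paper's route: the paper also differentiates (\ref{e19}) to obtain (\ref{e22}), namely $(\beta-\alpha)f\xi_{\alpha,\beta}'^w(X;t)=t\exp[-(\beta-\alpha)f\xi_{\alpha,\beta}^w(X;t)]-(\alpha+\beta-1)r_F(t)$, and then concludes in one line. Your computation of this identity is correct, including the recognition that $F^{\alpha+\beta-1}(t)\big/\int_{0}^{t}xF^{\alpha+\beta-1}(x)\,dx=\exp[-(\beta-\alpha)f\xi_{\alpha,\beta}^w(X;t)]$.

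The gap is in the last step, which you assert but do not carry out. Since $\beta-\alpha>0$, the identity gives: $f\xi_{\alpha,\beta}^w(X;t)$ increasing in $t$ $\iff$ $t\exp[-(\beta-\alpha)f\xi_{\alpha,\beta}^w(X;t)]\geq(\alpha+\beta-1)r_F(t)$ $\iff$ $r_F(t)\leq\dfrac{t}{\alpha+\beta-1}\exp[-(\beta-\alpha)f\xi_{\alpha,\beta}^w(X;t)]$. That is, IGDWFE pairs with $\leq$ and DGDWFE with $\geq$, which is the \emph{opposite} pairing from the statement you are asked to prove. This is precisely the sign flip you yourself point out relative to the survival case (\ref{e10}): there the hazard rate enters the derivative with a positive coefficient, so increasing pairs with $\lambda_F(t)\geq\cdots$; here the reverse hazard enters with a negative coefficient, so the pairing must reverse. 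Consequently either the theorem's inequalities are transposed (which (\ref{e22}) strongly suggests, and the paper's own one-line conclusion "the result follows from (\ref{e22})" hides the same inconsistency), or your closing claim that "rearranging \ldots yields the desired characterization" is false as stated. Having correctly identified sign bookkeeping as the only real obstacle, you needed to do that bookkeeping explicitly; as written, your conclusion contradicts your own (correct) derivation.
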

\begin{proof}
	Differentiating (\ref{e19}) we get,
	\begin{align}\label{e22}
	(\beta-\alpha)f\xi_{\alpha,\beta}'^w(X;t)=t\text{exp}[-(\beta-\alpha)f\xi_{\alpha,\beta}^w(X;t)]-(\alpha+\beta-1)r_F(t).
	\end{align}
	The result follows from (\ref{e22}).
\end{proof}
\begin{de}
	$X$ is said to be smaller than $Y$ in generalized weighted failure entropy ordering, denoted by $\stackrel{wfe}{X\leq Y}$, if $f\xi_{\alpha,\beta}^w(X)\leq f\xi_{\alpha,\beta}^w(Y)$.
\end{de}
\begin{de}[Shaked and Shantikumar (2007)]
	$X$ is said to be smaller than $Y$ in reversed hazard rate ordering, denoted by $\stackrel{rh}{X\leq Y}$, if $r_F(t)\leq r_G(t)$, $\forall t\geq 0$ or equivalently $\frac{{G}(t)}{{F}(t)}$ is increasing in $t$.
\end{de}	
\begin{de}
	$X$ is said to be smaller than $Y$ in generalized dynamic weighted failure entropy ordering, denoted by $\stackrel{dwfe}{X\leq Y}$, if $f\xi_{\alpha,\beta}^w(X;t)\leq f\xi_{\alpha,\beta}^w(Y;t)$, for $t>0$.
\end{de}
\begin{thm}
	Let $X$ and $Y$ be two non-negative continuous rvs with cdfs $F$ and $G$, respectively then $\stackrel{st}{X\leq Y}$ $\implies$ $\stackrel{wfe}{X\geq Y}$.
\end{thm}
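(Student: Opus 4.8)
The plan is to reduce the statement to a single pointwise inequality between the two cumulative distribution functions and then transport it through the monotone operations that define the GWFE. First I would invoke the definition of the usual stochastic order: $\stackrel{st}{X\leq Y}$ means $\bar{F}(x)\leq\bar{G}(x)$ for all $x$, which is equivalent to $F(x)\geq G(x)$ for every $x\geq 0$. This single inequality is the only input the argument requires.

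Next I would note that the admissible parameter range $\beta\geq 1$ and $\beta-1<\alpha<\beta$ forces the exponent $\alpha+\beta-1$ to be strictly positive. Since $u\mapsto u^{\alpha+\beta-1}$ is increasing on $[0,\infty)$ for a positive exponent and $0\leq G(x)\leq F(x)$, raising both sides to this power preserves the inequality, giving $F^{\alpha+\beta-1}(x)\geq G^{\alpha+\beta-1}(x)$ for all $x\geq 0$. Multiplying by the nonnegative weight $x$ and integrating over $[0,\infty)$ yields
\begin{align*}
\int_{0}^{\infty}x\,F^{\alpha+\beta-1}(x)\,dx\;\geq\;\int_{0}^{\infty}x\,G^{\alpha+\beta-1}(x)\,dx.
\end{align*}

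Finally I would apply the outer operations appearing in (\ref{e17}). Because $\log$ is increasing and $\beta-\alpha>0$, the map $y\mapsto\frac{1}{\beta-\alpha}\log y$ is monotone increasing, so the integral inequality transfers directly to $f\xi_{\alpha,\beta}^w(X)\geq f\xi_{\alpha,\beta}^w(Y)$, which is exactly $\stackrel{wfe}{X\geq Y}$ by the definition of the weighted failure entropy order.

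There is no genuine obstacle here; the result is the failure-entropy mirror of the survival statement in Theorem 3.3. The only point deserving emphasis is the reversal of direction in the conclusion. Because the GWFE is built from the cdf $F$ rather than the survival function $\bar{F}$, the hypothesis $\stackrel{st}{X\leq Y}$—which makes $F$ the pointwise larger cdf—produces the larger weighted integral and hence the larger entropy for $X$. Consequently the induced entropy order flips to $\stackrel{wfe}{X\geq Y}$ rather than $\stackrel{wfe}{X\leq Y}$, and I would flag this sign reversal explicitly so the reader does not mistake it for a typographical slip.
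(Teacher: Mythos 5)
Your proof is correct and is precisely the argument the paper leaves implicit: this theorem is stated without proof (its survival-side companion, Theorem 2.4, is dismissed with ``proof easily follows using the definition''), and filling it in amounts to exactly your chain --- $F\geq G$ pointwise, preservation under the positive power $\alpha+\beta-1$, the nonnegative weight $x$, integration, and the increasing map $y\mapsto\frac{1}{\beta-\alpha}\log y$. Your explicit flagging of the sign reversal (the order flips because GWFE is built from $F$ rather than $\bar{F}$) is a worthwhile addition, since the paper states the reversed conclusion without comment.
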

\begin{thm}
		Let $X$ and $Y$ be two non-negative continuous rvs with cdfs $F$ and $G$ and reversed hazard functions $r_F(t)$ and $r_G(t)$, respectively then  $\stackrel{hr}{X\leq Y}$ $\implies$  $\stackrel{dwfe}{X\geq Y}$.
\end{thm}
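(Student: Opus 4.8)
The plan is to mirror the proof of the survival-entropy analogue, translating the ordering hypothesis into a pointwise inequality between the normalized distribution functions and then feeding it into the definition (\ref{e19}) of GDWFE. Consistent with the reversed hazard functions $r_F,r_G$ appearing in the statement, the operative assumption is the reversed hazard rate order $\stackrel{rh}{X\leq Y}$, which by its definition is equivalent to $G(t)/F(t)$ being nondecreasing in $t$.

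First I would convert this monotonicity into the ratio inequality that drives everything. If $G/F$ is nondecreasing, then for every $x\leq t$ we have $G(x)/F(x)\leq G(t)/F(t)$, and rearranging (all quantities are positive on the common support) gives
\begin{align*}
\frac{F(x)}{F(t)}\geq \frac{G(x)}{G(t)},\qquad 0\leq x\leq t.
\end{align*}
This is the failure-side counterpart of the inequality $\bar F(x)/\bar F(t)\leq \bar G(x)/\bar G(t)$ used in the survival case, and the reversal of direction is exactly what produces the reversed conclusion $\stackrel{dwfe}{X\geq Y}$.

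Next I would raise both sides to the power $\alpha+\beta-1$. Since $\beta\geq 1$ and $\alpha>\beta-1$ we have $\alpha+\beta-1>2(\beta-1)\geq 0$, so $u\mapsto u^{\alpha+\beta-1}$ is increasing and the inequality is preserved. Multiplying through by $x\geq 0$ and integrating over $[0,t]$ preserves the inequality between the two integrals appearing in (\ref{e19}); applying the increasing map $\tfrac{1}{\beta-\alpha}\log(\cdot)$ (here $\beta-\alpha>0$) then yields $f\xi_{\alpha,\beta}^w(X;t)\geq f\xi_{\alpha,\beta}^w(Y;t)$ for all $t>0$, that is $\stackrel{dwfe}{X\geq Y}$.

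The computation is routine once the ratio inequality is in hand; the genuine care-points are bookkeeping ones, namely tracking the direction of the inequality at each stage and confirming $\alpha+\beta-1>0$ so that the power map does not flip it. The main thing to get right is the identification of the hypothesis: as displayed it reads $\stackrel{hr}{X\leq Y}$, but the hazard rate order controls the ratio $\bar G/\bar F$ and carries no information about the distribution-function ratios $F(x)/F(t)$ that govern (\ref{e19}), so it cannot by itself force the conclusion. The reversed hazard rate order $\stackrel{rh}{X\leq Y}$ is the correct and natural assumption here, and it is the one the argument above uses.
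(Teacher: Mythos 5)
Your proof is correct and, at bottom, it is the same argument the paper intends: everything reduces to the pointwise ratio inequality $\frac{F(x)}{F(t)}\geq\frac{G(x)}{G(t)}$ for $0\leq x\leq t$, which you raise to the power $\alpha+\beta-1>0$, multiply by $x$, integrate over $[0,t]$, and pass through the increasing map $\frac{1}{\beta-\alpha}\log(\cdot)$ in (\ref{e19}); the paper's one-line proof invokes exactly this inequality.

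Where you part ways with the paper is in which hypothesis delivers that inequality, and there you are right and the paper is not. Holding for all $x\leq t$ and all $t$, the inequality $\frac{F(x)}{F(t)}\geq\frac{G(x)}{G(t)}$ is equivalent to $G(t)/F(t)$ being nondecreasing, i.e.\ to the reversed hazard rate order $\stackrel{rh}{X\leq Y}$ defined just before the theorem (and otherwise never used in the paper); it is not a consequence of $\stackrel{hr}{X\leq Y}$. Concretely, let $Y$ be standard exponential and let $X$ have hazard rate $1$ on $[0,1)$ and $2$ on $[1,\infty)$. Then $\lambda_F\geq\lambda_G$ everywhere, so $\stackrel{hr}{X\leq Y}$, but $r_F(1)=\frac{2e^{-1}}{1-e^{-1}}>\frac{e^{-1}}{1-e^{-1}}=r_G(1)$, so $G/F$ is strictly decreasing just to the right of $t=1$ and the ratio inequality reverses there; one then checks that $f\xi_{\alpha,\beta}^w(X;t)<f\xi_{\alpha,\beta}^w(Y;t)$ for such $t$, so even the theorem's conclusion fails under the hypothesis as literally stated. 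In short, ``$\stackrel{hr}{X\leq Y}$'' in the statement and in the paper's proof is evidently a typo for ``$\stackrel{rh}{X\leq Y}$''; your identification of the reversed hazard order as the operative assumption is the correct reading, and with it your argument, including the bookkeeping $\alpha+\beta-1>0$ and $\beta-\alpha>0$, is complete.
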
		
		\begin{proof}
			Proof follows using the fact if $\stackrel{hr}{X\leq Y}$ then $\frac{F(x)}{F(t)}>\frac{G(x)}{G(t)}$.
		\end{proof} 
\begin{thm}
	Let $X$ and $Y$ be two non-negative continuous rvs and $\stackrel{dwfe}{X\leq(\geq) Y}$. Let $Z_1=a_1X$ and $Z_2=a_2Y$, where $a_1,a_2>0$. Then $\stackrel{dwfe}{Z_1\leq(\geq)Z_2}$, if $\xi_{\alpha,\beta}^w(X;t)$ is decreasing in $t>0$ and $a_1\leq(\geq)a_2$.
\end{thm}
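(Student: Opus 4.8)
The plan is to mirror exactly the argument used for the survival-entropy version (Theorem 3.4), replacing the GDWSE scaling identity (\ref{e8}) by its GDWFE counterpart (\ref{e20}) and replacing the decreasing GDWSE hypothesis and the $\stackrel{dwse}{\leq}$ ordering by the corresponding GDWFE objects. Concretely, I read the monotonicity hypothesis as ``$f\xi_{\alpha,\beta}^w(X;t)$ is decreasing in $t$'' (the symbol $\xi$ appearing in the statement being a typo for $f\xi$), which is what the argument requires. I would treat the two cases $a_1\leq a_2$ (giving $Z_1\leq Z_2$) and $a_1\geq a_2$ (giving $Z_1\geq Z_2$) separately, the second being obtained from the first by reversing every inequality.

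First I would apply the scaling relation (\ref{e20}) with $b=0$ to each of $Z_1=a_1X$ and $Z_2=a_2Y$, obtaining
\begin{align*}
f\xi_{\alpha,\beta}^w(Z_1;t)&=\frac{2\log a_1}{\beta-\alpha}+f\xi_{\alpha,\beta}^w\!\left(X;\frac{t}{a_1}\right),\\
f\xi_{\alpha,\beta}^w(Z_2;t)&=\frac{2\log a_2}{\beta-\alpha}+f\xi_{\alpha,\beta}^w\!\left(Y;\frac{t}{a_2}\right).
\end{align*}
Next, assuming $a_1\leq a_2$, I would note that $t/a_1\geq t/a_2$, so the decreasing-in-$t$ hypothesis gives $f\xi_{\alpha,\beta}^w(X;t/a_1)\leq f\xi_{\alpha,\beta}^w(X;t/a_2)$; combining this with $\stackrel{dwfe}{X\leq Y}$, which yields $f\xi_{\alpha,\beta}^w(X;t/a_2)\leq f\xi_{\alpha,\beta}^w(Y;t/a_2)$, produces the chain $f\xi_{\alpha,\beta}^w(X;t/a_1)\leq f\xi_{\alpha,\beta}^w(Y;t/a_2)$.

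Finally, since $a_1\leq a_2$ and $\beta-\alpha>0$, the additive constants satisfy $\frac{2\log a_1}{\beta-\alpha}\leq\frac{2\log a_2}{\beta-\alpha}$; adding this to the entropy inequality obtained above gives $f\xi_{\alpha,\beta}^w(Z_1;t)\leq f\xi_{\alpha,\beta}^w(Z_2;t)$ for every $t>0$, which is precisely $\stackrel{dwfe}{Z_1\leq Z_2}$. The case $a_1\geq a_2$ is handled symmetrically by reversing each inequality. I do not expect a genuine difficulty here, since the argument is purely a matter of bookkeeping; the only points requiring care are keeping the direction of the $t$-monotonicity aligned with the fact that $a_1\leq a_2$ inverts to $t/a_1\geq t/a_2$, and using $\beta-\alpha>0$ so that $\log$ preserves the order of the additive constants. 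The one substantive thing to flag is the apparent typo in the hypothesis, which should read $f\xi_{\alpha,\beta}^w(X;t)$ rather than $\xi_{\alpha,\beta}^w(X;t)$ in order for the scaling identity (\ref{e20}) to apply.
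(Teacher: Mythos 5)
Your proof is correct and matches the paper's intended argument exactly: the paper disposes of this theorem with the single remark that it ``follows along the same line'' as the GDWSE version (its Theorem 2.6), whose proof is precisely the chain you spell out, with the scaling identity (\ref{e20}) playing the role of (\ref{e8}). You are also right to flag the hypothesis as a typo — it should read $f\xi_{\alpha,\beta}^w(X;t)$ rather than $\xi_{\alpha,\beta}^w(X;t)$ — a slip the paper itself carries over from copying the survival-entropy statement.
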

\begin{proof}
	Proof follows along the same line as Theorem 2.6.
\end{proof}
Next theorem shows that GDWFE uniquely determines the distribution function of the underlying distribution.
		\begin{thm}
		Let $X$ be a non-negative continuous rv having pdf $f(x)$ and distribution function ${F}(x)$. Assume that, $f\xi_{\alpha,\beta}^w(X;t)<\infty;\;t\geq0,\;\forall \beta-1<\alpha<\beta,\; \beta\geq1$. Then for each $\alpha$ and $\beta$, $f\xi_{\alpha,\beta}^w(X;t)$ uniquely determines the cdf of $X$.
	\end{thm}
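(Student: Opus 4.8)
The plan is to adapt the argument used in the survival case (Theorem 3.5), with the reverse hazard function $r_F$ playing the role that the hazard function $\lambda_F$ played there. The crucial ingredient is already available in equation~(\ref{e22}), which relates the GDWFE, its derivative, and $r_F(t)$. Solving that identity for the reverse hazard function gives
\begin{align*}
r_F(t)=\frac{1}{\alpha+\beta-1}\Bigl(t\exp[-(\beta-\alpha)f\xi_{\alpha,\beta}^w(X;t)]-(\beta-\alpha)f\xi_{\alpha,\beta}'^w(X;t)\Bigr).
\end{align*}
The point to emphasize is that the entire function $t\mapsto r_F(t)$ is recovered from the single-variable function $t\mapsto f\xi_{\alpha,\beta}^w(X;t)$ and its derivative alone, with no other feature of $F$ entering the right-hand side.

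Next I would let $X_1$ and $X_2$ be two non-negative continuous rvs with cdfs $F_1,F_2$, pdfs $f_1,f_2$, GDWFEs $f\xi_{\alpha,\beta}^w(X_1;t)$ and $f\xi_{\alpha,\beta}^w(X_2;t)$, and reverse hazard functions $r_{F_1}$ and $r_{F_2}$, and assume $f\xi_{\alpha,\beta}^w(X_1;t)=f\xi_{\alpha,\beta}^w(X_2;t)$ for every $t\ge 0$. Differentiating this identity shows the two derivatives coincide as well, so substituting both pairs of quantities into the displayed expression for $r_F$ yields $r_{F_1}(t)=r_{F_2}(t)$ for all $t\ge 0$.

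Finally I would invoke the standard fact that the reverse hazard function determines the distribution uniquely. Since $r_F(t)=\frac{d}{dt}\log F(t)$ and $F(t)\to 1$ as $t\to\infty$, integration yields $F(t)=\exp\bigl(-\int_t^{\infty}r_F(s)\,ds\bigr)$, so $r_{F_1}\equiv r_{F_2}$ forces $F_1\equiv F_2$. Hence $f\xi_{\alpha,\beta}^w(X;t)$ determines the cdf, completing the argument.

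I do not anticipate a genuine obstacle here, because the logical skeleton is identical to that of Theorem 3.5; the only points needing care are technical. The finiteness and absolute-continuity hypotheses are precisely what guarantee that $f\xi_{\alpha,\beta}^w(X;t)$ is differentiable, so that passing to~(\ref{e22}) is legitimate. The one genuinely delicate spot is the behaviour near the left endpoint of the support, where $F(t)\to 0$ and the recovery integral $\int_t^{\infty}r_F(s)\,ds$ diverges; this is consistent with $F(t)\to 0$, and one should verify the recovery formula on the interior of the support and extend to the boundary by continuity.
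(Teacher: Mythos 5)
Your proposal is correct and follows essentially the same route as the paper: solve equation~(\ref{e22}) for $r_F(t)$ in terms of $f\xi_{\alpha,\beta}^w(X;t)$ and its derivative, conclude that equal GDWFEs force equal reverse hazard rates, and then invoke the fact that the reverse hazard rate determines the distribution. Your explicit recovery formula $F(t)=\exp\bigl(-\int_t^{\infty}r_F(s)\,ds\bigr)$ is a welcome elaboration of a step the paper merely asserts, but it does not change the argument.
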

\begin{proof} From (\ref{e22}) we have
	\begin{align}
	r_F()t=\frac{1}{\alpha+\beta-1}(t\text{exp}[-(\beta-\alpha)f\xi_{\alpha,\beta}^w(X;t)]-(\beta-\alpha)f\xi_{\alpha,\beta}'^w(X;t)).\label{e0}
	\end{align}
	Let $F_1(t)$ and $F_2(t)$be two distribution functions with generalized dynamic weighted failure entropies as $f\xi_{\alpha,\beta}^w(X_1;t)$ and $f\xi_{\alpha,\beta}^w(X_2;t)$ and the reverse hazard rate functions $r_{F_1}(t)$ and $r_{F_2}(t)$, respectively. Assume that 
	$f\xi_{\alpha,\beta}^w(X_1;t)=f\xi_{\alpha,\beta}^w(X_2;t)$ holds. Then from (\ref{e0}) we have $r_{F_1(t)}=r_{F_2(t)}$. Since reverse haxzard rate uniquely determines the distribution function of the underlying distribution, we obtain $F_1(t)=F_2(t)$.
\end{proof}
	Now we provide some characterization results for power distribution based on GDWFE.
	\begin{thm}
		Let $X$ be a non-negative rv having support $(0,b)$, with absolutely continuous distribution function $F(x)$ and reversed hazard rate function $r_F(x)$. Then $X$ has a power distribution with $F(x)=\left( \frac{x}{b}\right)^c,\;0<x<b,\;c>0$ if and only if $$(\beta-\alpha)f\xi_{\alpha,\beta}^w(X;t)=\log k+\log\mu_F^*(t),$$
		where $\mu_F^*(t)$ is the WMIT function of $X$ and $k(>0)$ is a constant.
	\end{thm}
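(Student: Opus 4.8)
The plan is to handle the two implications separately, the ``if'' part by direct evaluation and the ``only if'' part by reducing the hypothesis to an ordinary differential relation for the reversed hazard rate; throughout write $\gamma=\alpha+\beta-1$. For the ``if'' part I would substitute $F(x)=(x/b)^{c}$ into \eqref{e19} and \eqref{e21}. Since $F^{\gamma}(x)=(x/b)^{c\gamma}$, the inner integral in \eqref{e19} becomes $\int_{0}^{t}x\,(x/t)^{c\gamma}\,dx=t^{2}/(c\gamma+2)$, so that $(\beta-\alpha)f\xi_{\alpha,\beta}^{w}(X;t)=\log[t^{2}/(c\gamma+2)]$; likewise \eqref{e21} gives $\mu_{F}^{*}(t)=t^{2}/(c+2)$. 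Comparing the two expressions shows the asserted identity holds with the explicit positive constant $k=(c+2)/(c\gamma+2)$.

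For the ``only if'' part I would exponentiate the hypothesis to $\exp[(\beta-\alpha)f\xi_{\alpha,\beta}^{w}(X;t)]=k\,\mu_{F}^{*}(t)$ and differentiate its logarithm, obtaining $(\beta-\alpha)f\xi_{\alpha,\beta}'^{w}(X;t)=\mu_{F}'^{*}(t)/\mu_{F}^{*}(t)$. Two ingredients then feed into this. First, \eqref{e22} rewrites the left side as $t\exp[-(\beta-\alpha)f\xi_{\alpha,\beta}^{w}(X;t)]-\gamma\,r_{F}(t)=t/(k\mu_{F}^{*}(t))-\gamma\,r_{F}(t)$. Second, differentiating \eqref{e21} written as $\mu_{F}^{*}(t)=F(t)^{-1}\int_{0}^{t}xF(x)\,dx$ yields the identity $\mu_{F}'^{*}(t)=t-r_{F}(t)\mu_{F}^{*}(t)$, so the right side equals $t/\mu_{F}^{*}(t)-r_{F}(t)$. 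Equating the two and cancelling the common $t/\mu_{F}^{*}(t)$ term collapses the whole relation to the proportionality $r_{F}(t)\mu_{F}^{*}(t)=Ct$ with $C=(1-k)/(k(\gamma-1))$; note this step requires $\gamma\neq1$.

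From the proportionality I would finish in two moves. Feeding $r_{F}(t)\mu_{F}^{*}(t)=Ct$ back into $\mu_{F}'^{*}(t)=t-r_{F}(t)\mu_{F}^{*}(t)$ gives $\mu_{F}'^{*}(t)=(1-C)t$, and integrating with $\mu_{F}^{*}(0)=0$ produces $\mu_{F}^{*}(t)=\frac{1-C}{2}t^{2}$. Hence $r_{F}(t)=Ct/\mu_{F}^{*}(t)=c/t$ with $c=2C/(1-C)$, and integrating $r_{F}(t)=\frac{d}{dt}\log F(t)$ subject to the normalization $F(b)=1$ recovers $F(t)=(t/b)^{c}$, the power law.

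The main obstacle lies entirely in the ``only if'' direction: the algebraic cancellation leading to $r_{F}(t)\mu_{F}^{*}(t)=Ct$ must be carried out cleanly, and, more delicately, the constant of integration must be shown to vanish. The latter rests on the boundary behaviour $\mu_{F}^{*}(0^{+})=0$, which I would justify by monotonicity of $F$, namely $\mu_{F}^{*}(t)=F(t)^{-1}\int_{0}^{t}xF(x)\,dx\le F(t)^{-1}\int_{0}^{t}xF(t)\,dx=t^{2}/2\to0$. One should also record that the derived $C$ satisfies $C\in(0,1)$, so that $c=2C/(1-C)>0$ and $\mu_{F}^{*}$ stays positive, which is exactly what guarantees that the recovered $F$ is a genuine power distribution.
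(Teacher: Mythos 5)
Your proof is correct and takes essentially the same route as the paper's: differentiate the hypothesized identity, combine (\ref{e22}) with the relation $\mu_F'^*(t)=t-r_F(t)\mu_F^*(t)$ to reduce everything to the proportionality $r_F(t)\mu_F^*(t)=\frac{1-k}{k(\alpha+\beta-2)}\,t$, integrate with $\mu_F^*(0)=0$ to get $\mu_F^*(t)\propto t^2$, and conclude $r_F(t)=c/t$, the reversed hazard rate of the power law. Your extra care --- the explicit constant $k=(c+2)/(c\gamma+2)$ in the ``if'' direction, the caveat $\alpha+\beta\neq 2$, and the justification that $\mu_F^*(0^+)=0$ --- only tightens steps the paper leaves implicit (and states with some typographical slips), so no substantive difference remains.
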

\begin{proof}
	If part is straight-forward. Suppose the relation $(\beta-\alpha)f\xi_{\alpha,\beta}^w(X;t)=\text{log}k+\text{log}\mu_F^*(t)$ holds. Differentiating with respect to $t$ we get
	\begin{align}\label{e23}
	t\text{exp}[-(\beta-\alpha)f\xi_{\alpha,\beta}^w(X;t)]-(\alpha+\beta-1)r_F(t)=\frac{\mu_F'^*(t)}{\mu_F^*(t)}.
	\end{align}
	Substituting the value of $(\beta-\alpha)f\xi_{\alpha,\beta}^w(X;t)$ in (\ref{e23}) and using the fact that $\mu_F'^*(t)=\frac{d}{dt}\mu_F(t)=\frac{t-r_F(t)\mu_F^*(t)}{\mu_F^*(t)}$ and after some calculation (\ref{e23}) reduces to
	\begin{align}\label{e24}
	r_F(t)\mu_F^*(t)=\frac{1-k}{k(\alpha+\beta-2)}t.
	\end{align}
	This implies $$\mu_F^*(t)=\frac{k(\alpha+\beta-1)-1}{k(\alpha+\beta-2)}t.$$
	Integrating with respect to $t$ and taking $\mu_F^*(0)=0$ we get,
	$$\mu_F^*(t)=\frac{1-k}{k(\alpha+\beta-2)}\frac{t^2}{2}.$$
	From (\ref{e24}) we obtain $$r_F(t)=\frac{2(1-k)}{k(\alpha+\beta-1)-1}\frac{1}{t}=\frac{c}{t}$$
	where $c=\frac{2(1-k)}{k(\alpha+\beta-1)-1}>0$, for $1>k>\frac{1}{\alpha+\beta-1}$. So we see that $r_F(t)$ is the reverse hazard rate function of the power distribution with distriution function $F(x)=\left( \frac{x}{b}\right)^c,\;0<x<b,\;c>0$. Hence the result. 
\end{proof}
 Next we obtain a characterization result of largest order statistic based on GDWFE. The cdf of $X_{n:n}$ is given by $F_{n:n}(x)=F^n(x)$
and the GDWFE of $X_{n:n}$ is obtained as
\begin{align}\label{e25}
f\xi_{\alpha,\beta}^w(X_{n:n};t)&=\frac{1}{\beta-\alpha}\text{log}\int_{0}^{\infty}xF^{n(\alpha+\beta-1)}(x)dx\nonumber\\
&=\frac{1}{\beta-\alpha}\text{log}\int_{0}^{1}\frac{v^{n(\alpha+\beta-1)}F^{-1}(v)}{f(F^{-1}(v))}dv.
\end{align}
\begin{thm}
	Let $X$ and $Y$ be two non-negative continuous rvs having common support $(0,\infty)$ with cdfs $F(x)$ and $G(x)$, respectively. Then $F(x)=G(x)$
	if and only if $f\xi_{\alpha,\beta}^w(X_{n:n};t)=f\xi_{\alpha,\beta}^w(Y_{n:n};t)$, $\forall n$. 
\end{thm}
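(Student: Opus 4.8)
The plan is to imitate the argument already used for the smallest order statistic (Theorem 3.3), since the quantile representation (\ref{e25}) for $X_{n:n}$ has exactly the same shape. The ``only if'' direction is immediate: if $F\equiv G$ then $F_{n:n}=F^{n}=G^{n}=G_{n:n}$ for every $n$, so the two entropies agree for every $n$. All the work is in the ``if'' direction.

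For ``if'', I would start from the hypothesis $f\xi_{\alpha,\beta}^w(X_{n:n};t)=f\xi_{\alpha,\beta}^w(Y_{n:n};t)$ for all $n$ and feed it into (\ref{e25}). Since $\tfrac{1}{\beta-\alpha}\neq0$, exponentiating and cancelling the logarithms forces the two inner integrals to coincide, i.e.
\begin{align*}
\int_{0}^{1}v^{n(\alpha+\beta-1)}\left[\frac{F^{-1}(v)}{f(F^{-1}(v))}-\frac{G^{-1}(v)}{g(G^{-1}(v))}\right]dv=0,\qquad \forall n.
\end{align*}
Writing $\gamma=\alpha+\beta-1>0$, the exponents $n\gamma$ are not integers in general, so before invoking Lemma 3.1 I would perform the substitution $u=v^{\gamma}$ to convert the family $\{v^{n\gamma}\}$ into the integer power family $\{u^{n}\}$; the bracketed difference, multiplied by the Jacobian $\tfrac{1}{\gamma}u^{1/\gamma-1}$, then serves as the continuous function $\eta$ in Lemma 3.1. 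The lemma then forces this function, and hence the bracket itself, to vanish for almost every $v\in(0,1)$.

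This yields $\dfrac{F^{-1}(v)}{f(F^{-1}(v))}=\dfrac{G^{-1}(v)}{g(G^{-1}(v))}$ a.e., which, using $\frac{d}{dv}F^{-1}(v)=1/f(F^{-1}(v))$, is precisely
\begin{align*}
F^{-1}(v)\,\frac{d}{dv}F^{-1}(v)=G^{-1}(v)\,\frac{d}{dv}G^{-1}(v).
\end{align*}
Recognising the two sides as the derivatives of $\tfrac12\bigl(F^{-1}(v)\bigr)^2$ and $\tfrac12\bigl(G^{-1}(v)\bigr)^2$, I would integrate to obtain $\bigl(F^{-1}(v)\bigr)^2=\bigl(G^{-1}(v)\bigr)^2+C$. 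Because $X$ and $Y$ share the support $(0,\infty)$, letting $v\to0^{+}$ gives $F^{-1}(0^{+})=G^{-1}(0^{+})=0$, so $C=0$; non-negativity of the quantiles then forces $F^{-1}(v)=G^{-1}(v)$ on $(0,1)$, i.e. $F\equiv G$.

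The step I expect to be the genuine obstacle is the application of Lemma 3.1, which is stated for integer powers and for a function $\eta$ continuous on $[0,1]$, whereas here the powers are $n\gamma$ and the candidate integrand $F^{-1}(v)\frac{d}{dv}F^{-1}(v)$ need not be bounded or continuous up to the endpoints. I would handle this by justifying the substitution $u=v^{\gamma}$ and noting, in line with the standing finiteness hypothesis $f\xi_{\alpha,\beta}^w(X;t)<\infty$, that the transformed integrand extends continuously to $[0,1]$, so that the completeness conclusion of Lemma 3.1 is legitimately available; alternatively the Müntz--Szász theorem applied directly to $\{v^{n\gamma}\}$ (valid since $\sum_n 1/(n\gamma)=\infty$) gives the same vanishing without any reindexing.
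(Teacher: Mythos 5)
Your proposal is correct and follows essentially the same route as the paper's own proof: both use the quantile representation (\ref{e25}) to reduce the hypothesis to $\int_{0}^{1}v^{n(\alpha+\beta-1)}\bigl[\tfrac{F^{-1}(v)}{f(F^{-1}(v))}-\tfrac{G^{-1}(v)}{g(G^{-1}(v))}\bigr]dv=0$ for all $n$, invoke the completeness lemma (Lemma 3.1) to force the bracket to vanish almost everywhere, and then integrate the resulting quantile identity, using the common support to fix the constant and conclude $F=G$. In fact your treatment is more careful than the paper's: the paper applies Lemma 3.1 directly to the non-integer exponents $n(\alpha+\beta-1)$, whereas your substitution $u=v^{\alpha+\beta-1}$ (or, alternatively, the M\"untz--Sz\'asz argument with $\sum_n 1/(n(\alpha+\beta-1))=\infty$) is precisely the justification needed to make that step legitimate.
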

\begin{proof}
	The "only if" part is straight forward. For the "if" part assume that,
	$f\xi_{\alpha,\beta}^w(X_{n:n};t)=f\xi_{\alpha,\beta}^w(Y_{n:n};t)$ holds. Now from (\ref{e25}) we have,
	\begin{align*}
	\int_{0}^{1}v^{n(\alpha+\beta-1)}\left[\frac{F^{-1}(v)}{f(F^{-1}(v))}-\frac{G^{-1}(v)}{g(G^{-1}(v))} \right]dv.
	\end{align*}
\end{proof}
 Then from Lemma 4.1 we have, $\frac{F^{-1}(v)}{f(F^{-1}(v))}=\frac{G^{-1}(v)}{g(G^{-1}(v))}$ for almost all $v\in(0,1)$. The rest of the proof is similar to the proof of Theorem 3.3.

\section{Some Inequalities and Bounds} In this section we provide some upper and lower bounds for generalized weighted survival and failure entropies and their dynamic versions.
\begin{thm}
	Let $X$ be a non-negative continuous random variable with pdf $f(x)$, cdf $F(x)$ and sf $\bar{F}(x)$. The following inequalities holds:
	\begin{align*}
	&(i)\; (\beta-\alpha)\xi_{\alpha,\beta}^w(X)+(\alpha+\beta-1)\geq H(X)+E(\text{log}X).\\
	&(ii)\; (\beta-\alpha)f\xi_{\alpha,\beta}^w(X)+(\alpha+\beta-1)\geq H(X)+E(\text{log}X).
	\end{align*}
\end{thm}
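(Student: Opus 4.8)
The plan is to derive both inequalities from the nonnegativity of the Kullback--Leibler divergence (Gibbs' inequality), comparing the density $f$ against a suitably normalized weighted density. Write $\gamma=\alpha+\beta-1$ and $A=\int_{0}^{\infty}x\bar{F}^{\gamma}(x)\,dx$, so that $(\beta-\alpha)\xi_{\alpha,\beta}^w(X)=\log A$ and $A<\infty$ by hypothesis. First I would introduce the comparison density
\[
g(x)=\frac{x\,\bar{F}^{\gamma}(x)}{A},\qquad x>0,
\]
which is a genuine probability density on $(0,\infty)$: it is nonnegative and integrates to $1$ by the very definition of $A$.

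Next I would invoke $\int_{0}^{\infty}f(x)\log\frac{f(x)}{g(x)}\,dx\geq 0$ and expand the logarithm as $\log f(x)-\log x-\gamma\log\bar{F}(x)+\log A$. Integrating term by term, and recognizing $\int f\log f\,dx=-H(X)$ and $\int f\log x\,dx=E(\log X)$, turns this into
\[
-H(X)-E(\log X)-\gamma\int_{0}^{\infty}f(x)\log\bar{F}(x)\,dx+\log A\geq 0 .
\]
The decisive step is the evaluation of $\int_{0}^{\infty}f(x)\log\bar{F}(x)\,dx$: the substitution $u=\bar{F}(x)$ (so $du=-f(x)\,dx$, with $u$ running from $1$ down to $0$) reduces it to $\int_{0}^{1}\log u\,du=-1$. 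Plugging this value in collapses the displayed inequality to $\log A+\gamma\geq H(X)+E(\log X)$, which is exactly statement $(i)$.

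For statement $(ii)$ I would repeat the argument verbatim with $\bar{F}$ replaced by $F$: set $B=\int_{0}^{\infty}xF^{\gamma}(x)\,dx$, use the comparison density $g(x)=xF^{\gamma}(x)/B$, and note that the analogous identity $\int_{0}^{\infty}f(x)\log F(x)\,dx=-1$ follows from the substitution $u=F(x)$ with $u$ running from $0$ up to $1$. Gibbs' inequality then yields $\log B+\gamma\geq H(X)+E(\log X)$, i.e. $(\beta-\alpha)f\xi_{\alpha,\beta}^w(X)+(\alpha+\beta-1)\geq H(X)+E(\log X)$.

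The only genuine subtlety, and the step I would check most carefully, is the boundary behaviour in the two substitutions: one must verify $\bar{F}(0)=1$ and $F(0)=0$ (valid for a non-negative continuous rv) and that the endpoint contribution $\lim_{u\to 0^{+}}u\log u=0$ vanishes, so that $\int_{0}^{1}\log u\,du=\bigl[u\log u-u\bigr]_{0}^{1}=-1$ holds with no stray boundary term. Everything else is routine once the comparison density $g$ is identified; implicit throughout is that the logarithms are natural, which is precisely what makes the additive constant $\gamma=\alpha+\beta-1$ appear with coefficient exactly one.
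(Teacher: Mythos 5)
Your proof is correct and is essentially the same argument as the paper's: the paper applies the log-sum inequality to $f$ versus the unnormalized weight $x\bar{F}^{\alpha+\beta-1}(x)$, which, since $\int f = 1$, is exactly your Gibbs'/KL-divergence inequality with the normalized comparison density $g(x)=x\bar{F}^{\gamma}(x)/A$, and both proofs then use the same expansion of the logarithm and the same key evaluation $\int_{0}^{\infty}f(x)\log\bar{F}(x)\,dx=-1$. Your write-up is in fact more explicit than the paper's on that last evaluation and on the boundary terms, but the route is the same.
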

\begin{proof}Using log-sum inequality we have
\begin{align}\label{e26}
\int_{0}^{\infty}f(x)\text{log}\frac{f(x)}{x\bar{F}^{\alpha+\beta-1}(x)}dx&\geq \text{log}\dfrac{\int_{0}^{\infty}f(x)dx}{\int_{0}^{\infty}x\bar{F}^{\alpha+\beta-1}(x)dx}\int_{0}^{\infty}f(x)dx\nonumber\\
&=-\text{log}\int_{0}^{\infty}x\bar{F}^{\alpha+\beta-1}(x)dx\nonumber\\
&=-(\beta-\alpha)\xi_{\alpha,\beta}^w(X)
\end{align}
Now the L.H.S of (\ref{e26}) equals
\begin{align*}
\int_{0}^{\infty}(\text{log}f(x))f(x)dx-\int_{0}^{\infty}(\text{log}x)f(x)dx-(\alpha+\beta-1)\int_{0}^{\infty}\text{log}\bar{F}(x)f(x)dx,
\end{align*}
which reduces to $-H(X)-E(\text{log}X)+(\alpha+\beta-1)$. The result follows from (\ref{e26}). Part (ii) follows along the same line as part (i).
\end{proof}
In the next theorem we provide lower bound for GDWSE and GDWFE.
\begin{thm}
	Under the assumptions of Theorem 5.1, the following inequalities holds:
	\begin{align*}
		&(i)\; (\beta-\alpha)\xi_{\alpha,\beta}^w(X;t)+(\alpha+\beta-1)\geq H(X;t)+\int_{t}^{\infty}\frac{f(x)}{\bar{F}(t)}\log(x)\;dx\\
	&(ii)\; (\beta-\alpha)f\xi_{\alpha,\beta}^w(X;t)+(\alpha+\beta-1)\geq \bar{H}(X;t)+\int_{0}^{t}\frac{f(x)}{F(t)}\log(x)\;dx
	\end{align*}
\end{thm}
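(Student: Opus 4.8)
The plan is to mirror the structure of Theorem 5.1, replacing the static survival function by its dynamic (conditional) counterparts and applying the log-sum inequality to the appropriately normalized conditional density. For part (i), I would work with the residual density $f(x)/\bar{F}(t)$ on $[t,\infty)$, which integrates to $1$, and compare it against the weighted tail $x\bar{F}^{\alpha+\beta-1}(x)$. Concretely, I would start from the log-sum inequality applied to the two functions $f(x)/\bar{F}(t)$ and $x\bar{F}^{\alpha+\beta-1}(x)/\bar{F}^{\alpha+\beta-1}(t)$ over the interval $[t,\infty)$, yielding
\begin{align*}
\int_{t}^{\infty}\frac{f(x)}{\bar{F}(t)}\log\frac{f(x)/\bar{F}(t)}{x\,\bar{F}^{\alpha+\beta-1}(x)/\bar{F}^{\alpha+\beta-1}(t)}\,dx \geq -\log\int_{t}^{\infty}x\frac{\bar{F}^{\alpha+\beta-1}(x)}{\bar{F}^{\alpha+\beta-1}(t)}\,dx = -(\beta-\alpha)\xi_{\alpha,\beta}^w(X;t),
\end{align*}
where I have used that $\int_{t}^{\infty}f(x)/\bar{F}(t)\,dx=1$ to collapse the prefactor on the right.

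Next I would expand the logarithm on the left-hand side into three pieces. The term $\int_{t}^{\infty}\frac{f(x)}{\bar{F}(t)}\log\frac{f(x)}{\bar{F}(t)}\,dx$ is exactly $-H(X;t)$, the Ebrahimi dynamic residual entropy recalled in the introduction. The term $-\int_{t}^{\infty}\frac{f(x)}{\bar{F}(t)}\log x\,dx$ is the negative of the conditional expectation of $\log x$ appearing in the statement. The remaining term involves $\log\bigl(\bar{F}^{\alpha+\beta-1}(x)/\bar{F}^{\alpha+\beta-1}(t)\bigr)=(\alpha+\beta-1)\bigl(\log\bar{F}(x)-\log\bar{F}(t)\bigr)$; here the key simplification is that $\int_{t}^{\infty}\frac{f(x)}{\bar{F}(t)}\log\bar{F}(t)\,dx=\log\bar{F}(t)$ and $-\int_{t}^{\infty}\frac{f(x)}{\bar{F}(t)}\log\bar{F}(x)\,dx$ should be shown to equal the constant $1$ after integrating, so that this whole block contributes exactly $(\alpha+\beta-1)$. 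Assembling these, the left-hand side equals $-H(X;t)-\int_{t}^{\infty}\frac{f(x)}{\bar{F}(t)}\log x\,dx+(\alpha+\beta-1)$, and rearranging against the right-hand bound gives (i).

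For part (ii) I would run the identical argument on $[0,t]$ with the inactivity density $f(x)/F(t)$ and the weighted factor $x F^{\alpha+\beta-1}(x)/F^{\alpha+\beta-1}(t)$, using definition \eqref{e19}; the entropy piece now reproduces Di Crescenzo--Longobardi's dynamic past entropy $\bar{H}(X;t)$ rather than $H(X;t)$, and the conditional $\log x$ term is taken over $[0,t]$. The main obstacle, and the step I would verify most carefully, is the normalization constant coming from the $\log\bar{F}$ (respectively $\log F$) block: one must confirm that $-\int_{t}^{\infty}\frac{f(x)}{\bar{F}(t)}\log\bar{F}(x)\,dx$ evaluates to exactly $1$ (via the substitution $u=\bar{F}(x)$, which turns the integrand into $-\frac{1}{\bar F(t)}\log u$ integrated over $u\in[0,\bar F(t)]$), since this is precisely what produces the clean additive constant $(\alpha+\beta-1)$ in the bound. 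Everything else is a routine bookkeeping of the three logarithmic terms, exactly parallel to the proof of Theorem 5.1.
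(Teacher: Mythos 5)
Your proposal follows essentially the same route as the paper: the paper applies the log-sum inequality to the unnormalized pair $f(x)$ and $x\bar F^{\alpha+\beta-1}(x)/\bar F^{\alpha+\beta-1}(t)$ on $[t,\infty)$ and divides out $\bar F(t)$ at the end, whereas you normalize first and work with the conditional density $f(x)/\bar F(t)$; the three-way expansion of the logarithm and the substitution $u=\bar F(x)$ are the same. However, there is an arithmetic slip in exactly the step you flagged as the crux. The substitution gives
\[
-\int_t^\infty \frac{f(x)}{\bar F(t)}\log\bar F(x)\,dx \;=\; -\frac{1}{\bar F(t)}\Bigl[\bar F(t)\log\bar F(t)-\bar F(t)\Bigr] \;=\; 1-\log\bar F(t),
\]
not $1$. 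Taken literally, your two stated intermediate values would make the $\log\bar F$ block equal $(\alpha+\beta-1)\bigl(1+\log\bar F(t)\bigr)$, leaving a spurious term $(\alpha+\beta-1)\log\bar F(t)$ that would spoil the bound for every $t>0$; so as written the bookkeeping is internally inconsistent. Fortunately the error is self-correcting: the extra $-\log\bar F(t)$ in the correct evaluation is precisely what cancels the $+\log\bar F(t)$ coming from $\int_t^\infty \frac{f(x)}{\bar F(t)}\log\bar F(t)\,dx$, so the block does contribute exactly $(\alpha+\beta-1)$, as you asserted, and the remainder of your assembly of part (i), together with the parallel argument for part (ii) with $F$ in place of $\bar F$ on $[0,t]$ (where the same substitution yields $1-\log F(t)$), goes through and reproduces the paper's proof.
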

\begin{proof}
	$(i).$ From log-sum inequality we get
	\begin{align}\label{e27}
	\int_{t}^{\infty}f(x)\text{log}\frac{f(x)}{x \left( \frac{\bar{F}(x)}{\bar{F}(t) }\right) ^{\alpha+\beta-1}}dx&\geq \text{log}\dfrac{\int_{t}^{\infty}f(x)dx}{\int_{t}^{\infty}x\left( \frac{\bar{F}(x)}{\bar{F}(t) }\right) ^{\alpha+\beta-1}dx}\int_{0}^{\infty}f(x)dx\nonumber\\
	&=\bar{F}(t)[\text{log}\bar{F}(t)-(\beta-\alpha)\xi_{\alpha,\beta}^w(X;t)]
	\end{align}
	After some simplifications, R.H.S of (\ref{e27}) reduces to $\int_{t}^{\infty}(\text{log}f(x))f(x)dx-\int_{t}^{\infty}(\text{log}x)f(x)dx+(\alpha+\beta-1)\bar{F}(t)$. Using the definition of $H(X;t)$ and after some simpifications, the results follows from (\ref{e27}). Proof of part $(ii)$ follows similarly.
	
\end{proof}
Now we provide an upper bound for GDWSE and GDWFE.
\begin{thm}
	Under the assumptions of Theorem 5.1 and $X$ having support $[0,b]$, the following inequality holds:
	$$\xi_{\alpha,\beta}^w(X;t)\leq\frac{\int_{t}^{b}x\left( \frac{\bar{F}(x)}{\bar{F}(t)}\right)^{(\alpha+\beta-1)}\text{log}\left[x\left( \frac{\bar{F}(x)}{\bar{F}(t)}\right)^{(\alpha+\beta-1)} \right]dx }{(\beta-\alpha)\int_{t}^{b}x\left( \frac{\bar{F}(x)}{\bar{F}(t)}\right)^{(\alpha+\beta-1)}dx}+\frac{\log(b-t)}{\beta-\alpha},\;t<b.$$
\end{thm}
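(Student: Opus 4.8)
The plan is to reduce the claimed bound to the classical fact that, among all probability densities supported on an interval of length $b-t$, the uniform density has the largest differential entropy. Writing $\gamma=\alpha+\beta-1$ (which is positive, since $\beta\ge 1$ and $\alpha>\beta-1$ force $\alpha+\beta>1$), I would first set
$$g(x)=x\left(\frac{\bar F(x)}{\bar F(t)}\right)^{\gamma},\qquad x\in[t,b],\qquad I=\int_t^b g(x)\,dx,$$
and observe from (\ref{e7}), with the integration now cut off at the right endpoint $b$ of the support, that $I=\exp[(\beta-\alpha)\xi_{\alpha,\beta}^w(X;t)]$, so that $(\beta-\alpha)\xi_{\alpha,\beta}^w(X;t)=\log I$. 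Since $t<b$ forces $\bar F(t)>0$, and $g$ is nonnegative and bounded by $b$ on $[t,b]$, one has $0<I<\infty$, and the stated expression is well defined.

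Next I would normalise by putting $p(x)=g(x)/I$ on $[t,b]$, which is a genuine probability density. Because $\log g(x)=\log I+\log p(x)$, a direct expansion gives
$$\int_t^b g(x)\log g(x)\,dx=I\log I+I\int_t^b p(x)\log p(x)\,dx,$$
so that, dividing by $I$,
$$\frac{\int_t^b g(x)\log g(x)\,dx}{I}=\log I-H(p),\qquad H(p):=-\int_t^b p(x)\log p(x)\,dx.$$
Substituting $\log I=(\beta-\alpha)\xi_{\alpha,\beta}^w(X;t)$ and multiplying the target inequality through by $(\beta-\alpha)>0$, the statement to be proved becomes exactly equivalent to the single inequality
$$H(p)\le \log(b-t).$$

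The final step is this maximum-entropy bound, which I would get from the nonnegativity of the Kullback--Leibler divergence of $p$ against the uniform density $u(x)=1/(b-t)$ on $[t,b]$:
$$0\le \int_t^b p(x)\log\frac{p(x)}{u(x)}\,dx=-H(p)+\log(b-t),$$
equivalently by applying Jensen's inequality to the concave function $\log$. Rearranging yields $H(p)\le\log(b-t)$, with equality precisely when $g$ is constant on $[t,b]$, and the theorem follows. I do not expect a serious analytic obstacle; the one step that requires genuine care is the reduction itself — recognising that normalising the integrand $g$ to a density $p$ converts the right-hand side into $\log I$ minus the differential entropy $H(p)$, after which what remains is the standard statement that the uniform law maximises entropy on a bounded interval.
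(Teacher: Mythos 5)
Your proposal is correct and is essentially the paper's own argument: the paper proves the result by applying the log-sum inequality to the same integrand $g(x)=x\left(\bar{F}(x)/\bar{F}(t)\right)^{\alpha+\beta-1}$ against the constant function $1$ on $[t,b]$ (display (\ref{e28})), and after your normalisation $p=g/I$ that log-sum step is word-for-word your inequality $H(p)\le\log(b-t)$, i.e.\ nonnegativity of the Kullback--Leibler divergence of $p$ against the uniform density on $[t,b]$. The only difference is presentational: you make the normalisation explicit and phrase the key step as the maximum-entropy property of the uniform distribution, while the paper invokes the equivalent log-sum inequality directly.
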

\begin{proof}
Using log-sum inequality we get,
\begin{align}\label{e28}
\int_{t}^{b}x\left( \frac{\bar{F}(x)}{\bar{F}(t)}\right)&^{(\alpha+\beta-1)}\text{log}\left[x\left( \frac{\bar{F}(x)}{\bar{F}(t)}\right)^{(\alpha+\beta-1)} \right]dx\nonumber\\&\geq \text{log}\dfrac{\int_{t}^{b}x\left( \frac{\bar{F}(x)}{\bar{F}(t)}\right)^{(\alpha+\beta-1)}dx}{b-t}\int_{t}^{b}x\left( \frac{\bar{F}(x)}{\bar{F}(t)}\right)^{(\alpha+\beta-1)}dx\nonumber\\
&=[(\beta-\alpha)\xi_{\alpha,\beta}^w(X;t)-\text{log}(b-t)]\int_{t}^{b}x\left( \frac{\bar{F}(x)}{\bar{F}(t)}\right)^{(\alpha+\beta-1)}dx.
\end{align}
The proof follows from (\ref{e28}).
\end{proof}
\begin{p}
	Under the assumptions of Theorem 5.1, the following inequality holds:
		$$f\xi_{\alpha,\beta}^w(X;t)\leq\frac{\int_{0}^{t}x\left( \frac{{F}(x)}{{F}(t)}\right)^{(\alpha+\beta-1)}\text{log}\left[x\left( \frac{{F}(x)}{{F}(t)}\right)^{(\alpha+\beta-1)} \right]dx }{(\beta-\alpha)\int_{0}^{t}x\left( \frac{{F}(x)}{{F}(t)}\right)^{(\alpha+\beta-1)}dx}+\frac{\text{log}(t)}{\beta-\alpha}.$$
\end{p}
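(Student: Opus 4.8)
The plan is to mirror the argument of the preceding theorem (the upper bound for $\xi_{\alpha,\beta}^w(X;t)$), replacing the survival function $\bar{F}$ by the distribution function $F$, the residual integral over $[t,b]$ by the inactivity integral over $[0,t]$, and the GDWSE by the GDWFE defined in (\ref{e19}). The engine is again the continuous log-sum inequality: for non-negative integrable functions $a$ and $b$ on an interval $I$, one has $\int_I a\log\frac{a}{b}\geq\left(\int_I a\right)\log\frac{\int_I a}{\int_I b}$.

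First I would apply this inequality with $a(x)=x\left(\frac{F(x)}{F(t)}\right)^{\alpha+\beta-1}$ and $b(x)\equiv 1$ on the interval $[0,t]$. The left-hand side becomes $\int_0^t x\left(\frac{F(x)}{F(t)}\right)^{\alpha+\beta-1}\log\left[x\left(\frac{F(x)}{F(t)}\right)^{\alpha+\beta-1}\right]dx$, which is precisely the numerator appearing in the asserted bound. Since $\int_0^t b\,dx=t$, the right-hand side of the log-sum inequality is $\left(\int_0^t a\right)\log\frac{\int_0^t a}{t}$. Note that, in contrast to the survival case, no support restriction $[0,b]$ is needed: the inactivity integral already runs over the finite interval $[0,t]$ of length $t$, which is exactly why the term $\log t$ will replace $\log(b-t)$.

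Next I would identify the normalizing integral using the definition of GDWFE. By (\ref{e19}), $\int_0^t x\left(\frac{F(x)}{F(t)}\right)^{\alpha+\beta-1}dx=\exp[(\beta-\alpha)f\xi_{\alpha,\beta}^w(X;t)]$, so that $\log\frac{\int_0^t a}{t}=(\beta-\alpha)f\xi_{\alpha,\beta}^w(X;t)-\log t$. Substituting this expression into the right-hand side of the log-sum inequality and then dividing both sides by the positive quantity $(\beta-\alpha)\int_0^t a$ isolates $f\xi_{\alpha,\beta}^w(X;t)$ and rearranges directly into the stated inequality.

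I expect no genuine obstacle: the only points requiring care are verifying the hypotheses of the log-sum inequality (non-negativity and finiteness of the integrands on $[0,t]$, guaranteed by $F(t)>0$ and the standing assumption $f\xi_{\alpha,\beta}^w(X;t)<\infty$), confirming $\int_0^t a>0$ so the division is legitimate, and tracking the sign $\beta-\alpha>0$ (which follows from $\beta-1<\alpha<\beta$) so that the direction of the inequality is preserved when dividing. Everything else is routine bookkeeping parallel to the survival version.
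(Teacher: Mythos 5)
Your proposal is correct and coincides with the paper's own argument: the paper proves this proposition by declaring it ``similar to Theorem 5.3,'' whose proof is exactly the log-sum inequality applied with $a(x)=x\left(\frac{F(x)}{F(t)}\right)^{\alpha+\beta-1}$ and $b(x)\equiv 1$, followed by identifying $\int_0^t a\,dx=\exp[(\beta-\alpha)f\xi_{\alpha,\beta}^w(X;t)]$ via (\ref{e19}) and dividing by $(\beta-\alpha)\int_0^t a\,dx$. Your additional remarks on why the interval $[0,t]$ makes the support restriction $[0,b]$ unnecessary, and on checking positivity of $\beta-\alpha$ and of the normalizing integral, are accurate refinements of the same route.
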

\begin{proof}
	Proof is similar to Theorem 5.3.
\end{proof}
\section{Empirical GWSE and GWFE} Let $X_1,X_2,...,X_n$ be a random sample of size $n$ drawn from a distribution with cdf $F(x)$, sf $\bar{F}(x)$ and $X_{1:n}\leq X_{2:n}\leq...\leq X_{n:n}$ be the corresponding order statistics. Let $F_n(x)$ be the empirical distribution function of $X$ then for $X_{i:n}\leq x<X_{(i+1):n}$
\begin{align*}
F_n(x)=\frac{i}{n};\;i=1,2,\cdots n-1.
\end{align*} 
The emperical GWSE is defined as 
\begin{align}\label{e29}
\hat{\xi}_{\alpha,\beta}^w(X)=\frac{1}{\beta-\alpha}\text{log}\int_{0}^{\infty}x\bar{F_n}^{\alpha+\beta-1}(x)dx,\; \beta\geq 1,\; \beta-1\;<\alpha<\;\beta.
\end{align}
Substituting $\bar{F}_n(x)=1-\frac{i}{n},\;i=1,2,\cdots n-1,)$ in (\ref{e29}) we get, 
\begin{align}\label{e30}
\hat{\xi}_{\alpha,\beta}^w(X)&=\frac{1}{\beta-\alpha}\text{log}\left[ \sum_{i=1}^{n-1}\int_{X_{i:n}}^{X_{(i+1):n}}x\bar{F_n}^{\alpha+\beta-1}(x)dx\right] \nonumber\\
&=\frac{1}{\beta-\alpha}\text{log}\left[ \sum_{i=1}^{n-1}\dfrac{X^2_{(i+1):n}-X^2_{i:n}}{2}\left(1-\frac{i}{n} \right)^{\alpha+\beta-1}\right] \nonumber\\
&=\frac{1}{\beta-\alpha}\text{log}\left[ \frac{1}{2}\sum_{i=1}^{n-1}U_{i+1}\left(1-\frac{i}{n} \right)^{\alpha+\beta-1}\right],
\end{align}
where $U_{i+1}=\dfrac{X^2_{(i+1):n}-X^2_{i:n}}{2}$ and $U_1=X_{1:n}$.
Similarly, emperical GWFE can be obtained as 
\begin{align}\label{e31}
\hat{f\xi}_{\alpha,\beta}^w(X)=\frac{1}{\beta-\alpha}\text{log}\left[ \frac{1}{2}\sum_{i=1}^{n-1}U_{i+1}\left(\frac{i}{n} \right)^{\alpha+\beta-1}\right].
\end{align}

\section{Application} In this section we consider the difference between $\xi_{\alpha,\beta}^w(X)$ and its empirical version $\hat{\xi}_{\alpha,\beta}^w(X)$ as a test statistic for testing exponentiality. Let $X_1,X_2,\cdots,X_n$ be iid rvs from a non-negative absolutely continuous cdf $F$. Let $F_0(x,\lambda)=1-e^{-\lambda x},\;x>0,\;\lambda>0$, denote the cdf of a exponential distribution with parameter $\lambda$. We want to test the hypothesis 
\begin{align*}
H_0:F(x)=F_0(x,\lambda)\;\;\;\;vs.\;\;\;\;H_1:F(x)\neq F_0(x,\lambda).
\end{align*} 
Now consider the absolute difference between $\xi_{\alpha,\beta}^w(X)$ and $\hat{\xi}_{\alpha,\beta}^w(X)$ as $D=\mid \xi_{\alpha,\beta}^w(X)-\hat{\xi}_{\alpha,\beta}^w(X)\mid $. If $X\sim \exp(\lambda)$ then $\xi_{\alpha,\beta}^w(X)=\frac{2}{\alpha-\beta}\text{log}(\lambda(\alpha+\beta-1))$ and $D$ reduces to $D=\mid\hat{\xi}_{\alpha,\beta}^w(X)-\frac{2}{\alpha-\beta}\text{log}(\hat{\lambda}(\alpha+\beta-1))\mid$, where $\hat{\lambda}=1/\bar{X}$ is the maximum likelihood estimatir (mle) of $\lambda$. $D$ measures the distance between GWSE and empirical GWSE and large values of $D$ indicates that the sample is from a non-exponential family. Now consider the monotone transformation $T=\exp(-D)$, where $0<T<1$. Under the null hypothesis, $D\overset{p}{\to}0$ and hence $T\overset{p}{\to}1$. So we reject $H_0$ at the significance level $\gamma$ if $T<T_{\gamma,n}$, where $T<T_{\gamma,n}$ is the lower $\gamma$-quantile of the edf of $T$.\\

The sampling distribution of $T$ under $H_0$ is intractable. So to obtain the critical points $T_{\gamma,n}$ by simulations we generate 10000 samples of size $n$ from a standard exponential distribution has been generated for $n=1(1)30,\; 30(5)50$ and $50(10)100$. For each $n$ the lower $\gamma$-quantile of the edf of $T$ is used to determine $T_{\gamma,n}$. The critical points varies for different choice of $(\alpha,\beta)$. The critical points of 90\%, 95\% and 99\% are presented in the table \ref{t4} for $\alpha=0.26$ and $\beta=1.25$.

\begin{table}[h!]
	\centering
	\caption{Critical values of T}\label{t4}
	\begin{tabular}{c c c c c c c c }
		\hline\\
		$n$ & $T_{0.01,n}$ & $T_{0.05,n}$ & $T_{0.10,n}$& $n$ & $T_{0.01,n}$& $T_{0.05,n}$&$T_{0.10,n}$\\[1ex]
		\hline
		 4 & 0.07666 & 0.13727  & 0.16981 & 22 & 0.30434 & 0.35431 & 0.38425 \\
		 5 & 0.12145 & 0.17263 & 0.20185  &  23 & 0.30909 & 0.35696 & 0.39035\\
		 6 & 0.14906 & 0.19960 & 0.22662 &  24 & 0.31098 & 0.36215 & 0.39297\\
		 7 & 0.16893 & 0.21810 & 0.24337 &  25 & 0.31807 & 0.21810 & 0.37008\\
		 8 & 0.19024 & 0.23468 & 0.26113 & 26 & 0.32124 & 0.37164 & 0.40463\\
		 9 & 0.20317 & 0.24861 & 0.27558 & 27 & 0.32314 & 0.37540 & 0.40856\\
		  10 & 0.21687 & 0.26026  & 0.28849 &  28 & 0.32846 & 0.37966 & 0.41360\\
		 11 & 0.22709 & 0.27192 & 0.30025 &  29 & 0.33505 & 0.38617  & 0.41831\\
		 12 & 0.23581 & 0.28305 & 0.31134 &  30 & 0.34131 & 0.38831 & 0.42192\\
		 13 & 0.24587 & 0.28878 & 0.31884  & 35 & 0.35224 & 0.40191 & 0.43525\\
		 14 & 0.25436 & 0.29767 & 0.32693 & 40 & 0.37268 & 0.42064 & 0.45580 \\
		 15 & 0.26067 & 0.30454 & 0.33363 &   45 & 0.38505 & 0.43506 & 0.47251\\
		  16 & 0.26468 & 0.31550  & 0.34597 &  50 & 0.39104 & 0.44952 & 0.48579\\
		 17 & 0.27219 & 0.32203 & 0.35480 & 60 & 0.41437 & 0.47008 & 0.50450\\
		 18 & 0.28200 & 0.33075 & 0.36135  &  70 & 0.43474 & 0.48912 & 0.52189\\
		 19 & 0.28674 & 0.33222 & 0.36592 &  80 & 0.44990 & 0.50518 & 0.54231\\
		 20 & 0.28955 & 0.33686 & 0.36894 & 90 & 0.46394 & 0.51954 & 0.55417\\
		 21 & 0.29881 & 0.34701 & 0.37865 & 100 & 0.47300 & 0.52891 & 0.56268\\
		  
		\hline
	\end{tabular}
	
\end{table}
 We computhe the power of the test for Weibull and Gamma alternative. We observe through simulation, that the power of test does not changes significantly for different choices of the scale parameters of the alternative distributions. So we take the scale parameters to be 1 in both cases.\\

We calculated the powers of the test based on 10000 samples of size $n=5(5)30$. We obtained the powers for significance level $\gamma=0.01$, $\gamma=0.05$ and $\gamma=0.10$. From table \ref{t5} and \ref{t6} we see that as sample size increases the power of the test also increases, as expected. Also the power We calculated the powers of the tests based on 10000 samples of size $n=10(5)25$. We obtained the powers for significance level $\alpha=0.01$ and $\alpha=0.05$. For power computation we consider two alternative distributions Weibull (p,1) with pdf $f_W(x)=px^{p-1}e^{-x^p},\;x,p>0$ and Gamma (q,1) with pdf $f_{GA}(x)=\frac{e^{-x}x^{q-1}}{\Gamma(q)},\;x,q>0$. The powers for Weibull and gamma alternatives are proposed in tables \ref{t5} and \ref{t6}, respectively. It is observed that the powers of the test $T$ is higher than that of $T^{*}$ but slightly lower than that of $KL_{mn}$ for small sample size $n$ = 10. However, for moderate to large sample sizes the proposed test $T$ behaves similar to $KL_{mn}$ and $T^{*}$.  when the shape parameters increases in both cases. The power is very high even for small sample sizes. So this test can be used as a goodness of fit test for exponential distribution.

 \begin{table}[]
	\begin{minipage}{0.40\textwidth}
		\centering
		\caption{\textbf{Power of the test when the alternative is Weibull(p,1)}}\label{t5}
		\begin{tabular}{c c c c c }
			\hline\\
			$n$ & p & $\gamma=0.01$ & $\gamma=0.05$ & $\gamma=0.10$\\[1ex] 
			\hline
			5 & 2 & 0.1243 & 0.3830 & 0.5597\\
			& 3 & 0.3748 & 0.7520 & 0.8843\\
			& 4 & 0.6376 & 0.9300 & 0.9823\\\hline
			10 & 2 & 0.3819 & 0.6981 & 0.8237\\
			& 3 & 0.8936 & 0.9859 & 0.9962\\
			& 4 & 0.9948 & 1 & 1\\\hline
			15 & 2 & 0.5962 & 0.8305 & 0.9254\\
			& 3 & 0.9896 & 0.9997 & 1\\
			& 4 & 1 & 1 & 1\\\hline
			20 & 2 & 0.7348 & 0.9204 & 0.9680\\
			& 3 & 0.9987 & 1 & 1\\
			& 4 & 1 & 1 & 1\\\hline
			25 & 2 & 0.8552 & 0.9686 & 0.9890\\
			& 3 & 1 & 1 & 1\\
			& 4 & 1 & 1 & 1\\\hline
			30 & 2 & 0.9231 & 0.9837 & 0.9953\\
			& 3 & 1 & 1 & 1\\
			& 4 & 1 & 1 & 1\\\hline
		\end{tabular}

	\end{minipage}
	\hfill
	\begin{minipage}{0.40\textwidth}
		\centering
		\caption{\textbf{Power of the test when the alternative is Gamma(q,1)}}\label{t6}
		\begin{tabular}{c c c c c }
			\hline\\
			$n$ & p & $\gamma=0.01$ & $\gamma=0.05$ & $\gamma=0.10$\\[1ex] 
			\hline
			5 & 5 & 0.2316 & 0.5855 & 0.7674\\
			& 6 & 0.3101 & 0.6913 & 0.8434\\
			& 7 & 0.3745 & 0.7731 & 0.8992\\\hline
			10 & 5 & 0.6516 & 0.8746 & 0.9392\\
			& 6 & 0.7787 & 0.9377 & 0.9699\\
			& 7 & 0.8625 & 0.9712 & 0.9870\\\hline
			15 & 5 & 0.8233 & 0.9452 & 0.9724\\
			& 6 & 0.9206 & 0.9796 & 0.9918\\
			& 7 & 0.9591 & 0.9930 & 0.9968\\\hline
			20 & 5 & 0.9046 & 0.9739 & 0.9900\\
			& 6 & 0.9608 & 0.9917 & 0.9973\\
			& 7 & 0.9869 & 0.9984 & 0.9998\\\hline
			25 & 5 & 0.9556 & 0.9909 & 0.9960\\
			& 6 & 0.9869 & 0.9983 & 0.9990\\
			& 7 & 0.9961 & 0.9997 & 1\\\hline
			30 & 5 & 0.9771 & 0.9928 & 0.9986\\
			& 6 & 0.9943 & 0.9990 & 0.9996\\
			& 7 & 0.9986 & 1 & 1\\\hline
		\end{tabular}
	\end{minipage}
\end{table}

\section{Conclusion} Generalized weighted survival and failure entropies and their dynamic versions are considered. We provide several properties of the said measures and obtained characterizations results for Rayleigh and power distributions based on the dynamic versions. We also provide the empirical versions of the entropy measures and using the difference between GWSE and its empirical version we perform test of exponentiality. The test depends on the choice of $\alpha$ and $\beta$. Optimal choice of $\alpha$ and $\beta$ is an important issue. One can choose $(\alpha,\beta)$ in such a way that the asymptotic variances of the empirical generalized weighted survival and failure entropies are minimum. More work will be needed in this direction.

\bibliographystyle{apacite}
\bibliography{References}

\begin{thebibliography}{}

\bibitem [\protect \citeauthoryear {%
Abbasnejad%
}{%
Abbasnejad%
}{%
{\protect \APACyear {2011}}%
}]{%
abbasnejad2011some}
\APACinsertmetastar {%
abbasnejad2011some}%
\begin{APACrefauthors}%
Abbasnejad, M.%
\end{APACrefauthors}%
\unskip\
\newblock
\APACrefYearMonthDay{2011}{}{}.
\newblock
{\BBOQ}\APACrefatitle {Some characterization results based on dynamic survival
  and failure entropies} {Some characterization results based on dynamic
  survival and failure entropies}.{\BBCQ}
\newblock
\APACjournalVolNumPages{Communications for Statistical Applications and
  Methods}{18}{6}{787--798}.
\PrintBackRefs{\CurrentBib}

\bibitem [\protect \citeauthoryear {%
Abbasnejad%
, Arghami%
, Morgenthaler%
\BCBL {}\ \BBA {} Borzadaran%
}{%
Abbasnejad%
\ \protect \BOthers {.}}{%
{\protect \APACyear {2010}}%
}]{%
abbasnejad2010dynamic}
\APACinsertmetastar {%
abbasnejad2010dynamic}%
\begin{APACrefauthors}%
Abbasnejad, M.%
, Arghami, N\BPBI R.%
, Morgenthaler, S.%
\BCBL {}\ \BBA {} Borzadaran, G\BPBI M.%
\end{APACrefauthors}%
\unskip\
\newblock
\APACrefYearMonthDay{2010}{}{}.
\newblock
{\BBOQ}\APACrefatitle {On the dynamic survival entropy} {On the dynamic
  survival entropy}.{\BBCQ}
\newblock
\APACjournalVolNumPages{Statistics \& probability
  letters}{80}{23-24}{1962--1971}.
\PrintBackRefs{\CurrentBib}

\bibitem [\protect \citeauthoryear {%
Asadi%
\ \BBA {} Zohrevand%
}{%
Asadi%
\ \BBA {} Zohrevand%
}{%
{\protect \APACyear {2007}}%
}]{%
asadi2007dynamic}
\APACinsertmetastar {%
asadi2007dynamic}%
\begin{APACrefauthors}%
Asadi, M.%
\BCBT {}\ \BBA {} Zohrevand, Y.%
\end{APACrefauthors}%
\unskip\
\newblock
\APACrefYearMonthDay{2007}{}{}.
\newblock
{\BBOQ}\APACrefatitle {On the dynamic cumulative residual entropy} {On the
  dynamic cumulative residual entropy}.{\BBCQ}
\newblock
\APACjournalVolNumPages{Journal of Statistical Planning and
  Inference}{137}{6}{1931--1941}.
\PrintBackRefs{\CurrentBib}

\bibitem [\protect \citeauthoryear {%
Belis%
\ \BBA {} Guiasu%
}{%
Belis%
\ \BBA {} Guiasu%
}{%
{\protect \APACyear {1968}}%
}]{%
belis1968quantitative}
\APACinsertmetastar {%
belis1968quantitative}%
\begin{APACrefauthors}%
Belis, M.%
\BCBT {}\ \BBA {} Guiasu, S.%
\end{APACrefauthors}%
\unskip\
\newblock
\APACrefYearMonthDay{1968}{}{}.
\newblock
{\BBOQ}\APACrefatitle {A quantitative-qualitative measure of information in
  cybernetic systems (Corresp.)} {A quantitative-qualitative measure of
  information in cybernetic systems (corresp.)}.{\BBCQ}
\newblock
\APACjournalVolNumPages{IEEE Transactions on Information
  Theory}{14}{4}{593--594}.
\PrintBackRefs{\CurrentBib}

\bibitem [\protect \citeauthoryear {%
Das%
}{%
Das%
}{%
{\protect \APACyear {2017}}%
}]{%
das2017weighted}
\APACinsertmetastar {%
das2017weighted}%
\begin{APACrefauthors}%
Das, S.%
\end{APACrefauthors}%
\unskip\
\newblock
\APACrefYearMonthDay{2017}{}{}.
\newblock
{\BBOQ}\APACrefatitle {On weighted generalized entropy} {On weighted
  generalized entropy}.{\BBCQ}
\newblock
\APACjournalVolNumPages{Communications in Statistics-Theory and
  Methods}{46}{12}{5707--5727}.
\PrintBackRefs{\CurrentBib}

\bibitem [\protect \citeauthoryear {%
Di~Crescenzo%
\ \BBA {} Longobardi%
}{%
Di~Crescenzo%
\ \BBA {} Longobardi%
}{%
{\protect \APACyear {2002}}%
}]{%
di2002entropy}
\APACinsertmetastar {%
di2002entropy}%
\begin{APACrefauthors}%
Di~Crescenzo, A.%
\BCBT {}\ \BBA {} Longobardi, M.%
\end{APACrefauthors}%
\unskip\
\newblock
\APACrefYearMonthDay{2002}{}{}.
\newblock
{\BBOQ}\APACrefatitle {Entropy-based measure of uncertainty in past lifetime
  distributions} {Entropy-based measure of uncertainty in past lifetime
  distributions}.{\BBCQ}
\newblock
\APACjournalVolNumPages{Journal of Applied Probability}{39}{2}{434--440}.
\PrintBackRefs{\CurrentBib}

\bibitem [\protect \citeauthoryear {%
Di~Crescenzo%
\ \BBA {} Longobardi%
}{%
Di~Crescenzo%
\ \BBA {} Longobardi%
}{%
{\protect \APACyear {2009}}%
}]{%
di2009cumulative}
\APACinsertmetastar {%
di2009cumulative}%
\begin{APACrefauthors}%
Di~Crescenzo, A.%
\BCBT {}\ \BBA {} Longobardi, M.%
\end{APACrefauthors}%
\unskip\
\newblock
\APACrefYearMonthDay{2009}{}{}.
\newblock
{\BBOQ}\APACrefatitle {On cumulative entropies} {On cumulative
  entropies}.{\BBCQ}
\newblock
\APACjournalVolNumPages{Journal of Statistical Planning and
  Inference}{139}{12}{4072--4087}.
\PrintBackRefs{\CurrentBib}

\bibitem [\protect \citeauthoryear {%
Ebrahimi%
}{%
Ebrahimi%
}{%
{\protect \APACyear {1996}}%
}]{%
ebrahimi1996measure}
\APACinsertmetastar {%
ebrahimi1996measure}%
\begin{APACrefauthors}%
Ebrahimi, N.%
\end{APACrefauthors}%
\unskip\
\newblock
\APACrefYearMonthDay{1996}{}{}.
\newblock
{\BBOQ}\APACrefatitle {How to measure uncertainty in the residual life time
  distribution} {How to measure uncertainty in the residual life time
  distribution}.{\BBCQ}
\newblock
\APACjournalVolNumPages{Sankhy{\=a}: The Indian Journal of Statistics, Series
  A}{}{}{48--56}.
\PrintBackRefs{\CurrentBib}

\bibitem [\protect \citeauthoryear {%
Grubbs%
}{%
Grubbs%
}{%
{\protect \APACyear {1971}}%
}]{%
grubbs1971approximate}
\APACinsertmetastar {%
grubbs1971approximate}%
\begin{APACrefauthors}%
Grubbs, F\BPBI E.%
\end{APACrefauthors}%
\unskip\
\newblock
\APACrefYearMonthDay{1971}{}{}.
\newblock
{\BBOQ}\APACrefatitle {Approximate fiducial bounds on reliability for the two
  parameter negative exponential distribution} {Approximate fiducial bounds on
  reliability for the two parameter negative exponential distribution}.{\BBCQ}
\newblock
\APACjournalVolNumPages{Technometrics}{13}{4}{873--876}.
\PrintBackRefs{\CurrentBib}

\bibitem [\protect \citeauthoryear {%
Kayal%
}{%
Kayal%
}{%
{\protect \APACyear {2015}}%
}]{%
kayal2015generalized}
\APACinsertmetastar {%
kayal2015generalized}%
\begin{APACrefauthors}%
Kayal, S.%
\end{APACrefauthors}%
\unskip\
\newblock
\APACrefYearMonthDay{2015}{}{}.
\newblock
{\BBOQ}\APACrefatitle {On generalized dynamic survival and failure entropies of
  order ($\alpha$, $\beta$)} {On generalized dynamic survival and failure
  entropies of order ($\alpha$, $\beta$)}.{\BBCQ}
\newblock
\APACjournalVolNumPages{Statistics \& Probability Letters}{96}{}{123--132}.
\PrintBackRefs{\CurrentBib}

\bibitem [\protect \citeauthoryear {%
Khammar%
\ \BBA {} Jahanshahi%
}{%
Khammar%
\ \BBA {} Jahanshahi%
}{%
{\protect \APACyear {2018}}%
}]{%
khammar2018weighted}
\APACinsertmetastar {%
khammar2018weighted}%
\begin{APACrefauthors}%
Khammar, A.%
\BCBT {}\ \BBA {} Jahanshahi, S.%
\end{APACrefauthors}%
\unskip\
\newblock
\APACrefYearMonthDay{2018}{}{}.
\newblock
{\BBOQ}\APACrefatitle {On weighted cumulative residual Tsallis entropy and its
  dynamic version} {On weighted cumulative residual tsallis entropy and its
  dynamic version}.{\BBCQ}
\newblock
\APACjournalVolNumPages{Physica A: Statistical Mechanics and its
  Applications}{491}{}{678--692}.
\PrintBackRefs{\CurrentBib}

\bibitem [\protect \citeauthoryear {%
Mirali%
\ \BBA {} Baratpour%
}{%
Mirali%
\ \BBA {} Baratpour%
}{%
{\protect \APACyear {2017}}%
{\protect \APACexlab {{\protect \BCnt {1}}}}}]{%
mirali2017dynamic}
\APACinsertmetastar {%
mirali2017dynamic}%
\begin{APACrefauthors}%
Mirali, M.%
\BCBT {}\ \BBA {} Baratpour, S.%
\end{APACrefauthors}%
\unskip\
\newblock
\APACrefYearMonthDay{2017{\protect \BCnt {1}}}{}{}.
\newblock
{\BBOQ}\APACrefatitle {Dynamic version of weighted cumulative residual entropy}
  {Dynamic version of weighted cumulative residual entropy}.{\BBCQ}
\newblock
\APACjournalVolNumPages{Communications in Statistics-Theory and
  Methods}{46}{22}{11047--11059}.
\PrintBackRefs{\CurrentBib}

\bibitem [\protect \citeauthoryear {%
Mirali%
\ \BBA {} Baratpour%
}{%
Mirali%
\ \BBA {} Baratpour%
}{%
{\protect \APACyear {2017}}%
{\protect \APACexlab {{\protect \BCnt {2}}}}}]{%
mirali2017some}
\APACinsertmetastar {%
mirali2017some}%
\begin{APACrefauthors}%
Mirali, M.%
\BCBT {}\ \BBA {} Baratpour, S.%
\end{APACrefauthors}%
\unskip\
\newblock
\APACrefYearMonthDay{2017{\protect \BCnt {2}}}{}{}.
\newblock
{\BBOQ}\APACrefatitle {Some results on weighted cumulative entropy} {Some
  results on weighted cumulative entropy}.{\BBCQ}
\newblock
\APACjournalVolNumPages{Journal of the Iranian Statistical
  Society}{16}{2}{21--32}.
\PrintBackRefs{\CurrentBib}

\bibitem [\protect \citeauthoryear {%
Mirali%
, Baratpour%
\BCBL {}\ \BBA {} Fakoor%
}{%
Mirali%
\ \protect \BOthers {.}}{%
{\protect \APACyear {2017}}%
}]{%
mirali2017weighted}
\APACinsertmetastar {%
mirali2017weighted}%
\begin{APACrefauthors}%
Mirali, M.%
, Baratpour, S.%
\BCBL {}\ \BBA {} Fakoor, V.%
\end{APACrefauthors}%
\unskip\
\newblock
\APACrefYearMonthDay{2017}{}{}.
\newblock
{\BBOQ}\APACrefatitle {On weighted cumulative residual entropy} {On weighted
  cumulative residual entropy}.{\BBCQ}
\newblock
\APACjournalVolNumPages{Communications in Statistics-Theory and
  Methods}{46}{6}{2857--2869}.
\PrintBackRefs{\CurrentBib}

\bibitem [\protect \citeauthoryear {%
Misagh%
, Panahi%
, Yari%
\BCBL {}\ \BBA {} Shahi%
}{%
Misagh%
\ \protect \BOthers {.}}{%
{\protect \APACyear {2011}}%
}]{%
misagh2011weighted}
\APACinsertmetastar {%
misagh2011weighted}%
\begin{APACrefauthors}%
Misagh, F.%
, Panahi, Y.%
, Yari, G.%
\BCBL {}\ \BBA {} Shahi, R.%
\end{APACrefauthors}%
\unskip\
\newblock
\APACrefYearMonthDay{2011}{}{}.
\newblock
{\BBOQ}\APACrefatitle {Weighted cumulative entropy and its estimation}
  {Weighted cumulative entropy and its estimation}.{\BBCQ}
\newblock
\BIn{} \APACrefbtitle {2011 IEEE International Conference on Quality and
  Reliability} {2011 ieee international conference on quality and reliability}\
  (\BPGS\ 477--480).
\PrintBackRefs{\CurrentBib}

\bibitem [\protect \citeauthoryear {%
Nair%
, Abdul-Sathar%
\BCBL {}\ \BBA {} Rajesh%
}{%
Nair%
\ \protect \BOthers {.}}{%
{\protect \APACyear {2017}}%
}]{%
nair2017study}
\APACinsertmetastar {%
nair2017study}%
\begin{APACrefauthors}%
Nair, R\BPBI S.%
, Abdul-Sathar, E.%
\BCBL {}\ \BBA {} Rajesh, G.%
\end{APACrefauthors}%
\unskip\
\newblock
\APACrefYearMonthDay{2017}{}{}.
\newblock
{\BBOQ}\APACrefatitle {A study on dynamic weighted failure entropy of order
  $\alpha$} {A study on dynamic weighted failure entropy of order
  $\alpha$}.{\BBCQ}
\newblock
\APACjournalVolNumPages{American Journal of Mathematical and Management
  Sciences}{36}{2}{137--149}.
\PrintBackRefs{\CurrentBib}

\bibitem [\protect \citeauthoryear {%
Nourbakhsh%
, Yari%
\BCBL {}\ \BBA {} Mehrali%
}{%
Nourbakhsh%
\ \protect \BOthers {.}}{%
{\protect \APACyear {2016}}%
}]{%
nourbakhsh2016weighted}
\APACinsertmetastar {%
nourbakhsh2016weighted}%
\begin{APACrefauthors}%
Nourbakhsh, M.%
, Yari, G.%
\BCBL {}\ \BBA {} Mehrali, Y.%
\end{APACrefauthors}%
\unskip\
\newblock
\APACrefYearMonthDay{2016}{}{}.
\newblock
{\BBOQ}\APACrefatitle {Weighted entropies and their estimations} {Weighted
  entropies and their estimations}.{\BBCQ}
\newblock
\APACjournalVolNumPages{Communications in Statistics-Simulation and
  Computation}{}{just-accepted}{}.
\PrintBackRefs{\CurrentBib}

\bibitem [\protect \citeauthoryear {%
Rajesh%
, Abdul-Sathar%
\BCBL {}\ \BBA {} Rohini%
}{%
Rajesh%
\ \protect \BOthers {.}}{%
{\protect \APACyear {2017}}%
}]{%
rajesh2017dynamic}
\APACinsertmetastar {%
rajesh2017dynamic}%
\begin{APACrefauthors}%
Rajesh, G.%
, Abdul-Sathar, E.%
\BCBL {}\ \BBA {} Rohini, S\BPBI N.%
\end{APACrefauthors}%
\unskip\
\newblock
\APACrefYearMonthDay{2017}{}{}.
\newblock
{\BBOQ}\APACrefatitle {On dynamic weighted survival entropy of order $\alpha$}
  {On dynamic weighted survival entropy of order $\alpha$}.{\BBCQ}
\newblock
\APACjournalVolNumPages{Communications in Statistics-Theory and
  Methods}{46}{5}{2139--2150}.
\PrintBackRefs{\CurrentBib}

\bibitem [\protect \citeauthoryear {%
Rao%
}{%
Rao%
}{%
{\protect \APACyear {2005}}%
}]{%
rao2005more}
\APACinsertmetastar {%
rao2005more}%
\begin{APACrefauthors}%
Rao, M.%
\end{APACrefauthors}%
\unskip\
\newblock
\APACrefYearMonthDay{2005}{}{}.
\newblock
{\BBOQ}\APACrefatitle {More on a new concept of entropy and information} {More
  on a new concept of entropy and information}.{\BBCQ}
\newblock
\APACjournalVolNumPages{Journal of Theoretical Probability}{18}{4}{967--981}.
\PrintBackRefs{\CurrentBib}

\bibitem [\protect \citeauthoryear {%
Rao%
, Chen%
, Vemuri%
\BCBL {}\ \BBA {} Wang%
}{%
Rao%
\ \protect \BOthers {.}}{%
{\protect \APACyear {2004}}%
}]{%
rao2004cumulative}
\APACinsertmetastar {%
rao2004cumulative}%
\begin{APACrefauthors}%
Rao, M.%
, Chen, Y.%
, Vemuri, B\BPBI C.%
\BCBL {}\ \BBA {} Wang, F.%
\end{APACrefauthors}%
\unskip\
\newblock
\APACrefYearMonthDay{2004}{}{}.
\newblock
{\BBOQ}\APACrefatitle {Cumulative residual entropy: a new measure of
  information} {Cumulative residual entropy: a new measure of
  information}.{\BBCQ}
\newblock
\APACjournalVolNumPages{IEEE Transactions on Information
  Theory}{50}{6}{1220--1228}.
\PrintBackRefs{\CurrentBib}

\bibitem [\protect \citeauthoryear {%
R{\'e}nyi%
\ \protect \BOthers {.}}{%
R{\'e}nyi%
\ \protect \BOthers {.}}{%
{\protect \APACyear {1961}}%
}]{%
renyi1961measures}
\APACinsertmetastar {%
renyi1961measures}%
\begin{APACrefauthors}%
R{\'e}nyi, A.%
\BCBT {}\ \BOthersPeriod {.}
\end{APACrefauthors}%
\unskip\
\newblock
\APACrefYearMonthDay{1961}{}{}.
\newblock
{\BBOQ}\APACrefatitle {On measures of entropy and information} {On measures of
  entropy and information}.{\BBCQ}
\newblock
\BIn{} \APACrefbtitle {Proceedings of the Fourth Berkeley Symposium on
  Mathematical Statistics and Probability, Volume 1: Contributions to the
  Theory of Statistics.} {Proceedings of the fourth berkeley symposium on
  mathematical statistics and probability, volume 1: Contributions to the
  theory of statistics.}
\PrintBackRefs{\CurrentBib}

\bibitem [\protect \citeauthoryear {%
Shaked%
\ \BBA {} Shanthikumar%
}{%
Shaked%
\ \BBA {} Shanthikumar%
}{%
{\protect \APACyear {2007}}%
}]{%
shaked2007stochastic}
\APACinsertmetastar {%
shaked2007stochastic}%
\begin{APACrefauthors}%
Shaked, M.%
\BCBT {}\ \BBA {} Shanthikumar, J\BPBI G.%
\end{APACrefauthors}%
\unskip\
\newblock
\APACrefYear{2007}.
\newblock
\APACrefbtitle {Stochastic orders} {Stochastic orders}.
\newblock
\APACaddressPublisher{}{Springer Science \& Business Media}.
\PrintBackRefs{\CurrentBib}

\bibitem [\protect \citeauthoryear {%
Shannon%
}{%
Shannon%
}{%
{\protect \APACyear {1948}}%
}]{%
shannon1948mathematical}
\APACinsertmetastar {%
shannon1948mathematical}%
\begin{APACrefauthors}%
Shannon, C\BPBI E.%
\end{APACrefauthors}%
\unskip\
\newblock
\APACrefYearMonthDay{1948}{}{}.
\newblock
{\BBOQ}\APACrefatitle {A mathematical theory of communication} {A mathematical
  theory of communication}.{\BBCQ}
\newblock
\APACjournalVolNumPages{Bell System Technical Journal}{27}{3}{379--423}.
\PrintBackRefs{\CurrentBib}

\bibitem [\protect \citeauthoryear {%
Varma%
}{%
Varma%
}{%
{\protect \APACyear {1966}}%
}]{%
varma1966generalizations}
\APACinsertmetastar {%
varma1966generalizations}%
\begin{APACrefauthors}%
Varma, R.%
\end{APACrefauthors}%
\unskip\
\newblock
\APACrefYearMonthDay{1966}{}{}.
\newblock
{\BBOQ}\APACrefatitle {Generalizations of Renyi’s entropy of order $\alpha$}
  {Generalizations of renyi’s entropy of order $\alpha$}.{\BBCQ}
\newblock
\APACjournalVolNumPages{Journal of Mathematical Sciences}{1}{}{34--48}.
\PrintBackRefs{\CurrentBib}

\bibitem [\protect \citeauthoryear {%
Zografos%
\ \BBA {} Nadarajah%
}{%
Zografos%
\ \BBA {} Nadarajah%
}{%
{\protect \APACyear {2005}}%
}]{%
zografos2005survival}
\APACinsertmetastar {%
zografos2005survival}%
\begin{APACrefauthors}%
Zografos, K.%
\BCBT {}\ \BBA {} Nadarajah, S.%
\end{APACrefauthors}%
\unskip\
\newblock
\APACrefYearMonthDay{2005}{}{}.
\newblock
{\BBOQ}\APACrefatitle {Survival exponential entropies} {Survival exponential
  entropies}.{\BBCQ}
\newblock
\APACjournalVolNumPages{IEEE Transactions on Information
  Theory}{51}{3}{1239--1246}.
\PrintBackRefs{\CurrentBib}

\end{thebibliography}
\end{document}